\crefname{equation}{}{}
\crefname{enumi}{}{}
\theoremstyle{plain}
\newtheorem{lemma}{Lemma}[section]
\newtheorem{theorem}[lemma]{Theorem}
\theoremstyle{definition}
\theoremstyle{remark}
\newtheorem{remark}[lemma]{Remark}
\numberwithin{equation}{section}
\newcommand{\N}{\mathbb{N}}
\newcommand{\R}{\mathbb{R}}
\newcommand{\T}{\mathbb{T}}
\newcommand{\Z}{\mathbb{Z}}
\newcommand{\pt}{\partial_t}
\newcommand{\norm}[1]{\left\|#1\right\|}
\newcommand{\eps}{\varepsilon}
\newcommand{\grad}{\nabla}
\DeclareMathOperator*{\sign}{sign}
\newcommand{\sgn}[1]{\sign\left(#1\right)}
\DeclareMathOperator*{\supp}{supp}
\newcommand{\dd}{\,\mathrm{d}}
\renewcommand{\d}{\mathrm{d}}
\renewcommand{\div}{\operatorname{div}}
\newcommand{\de}{\partial}
\newcommand{\BMO}{\mathrm{BMO}}
\DeclareMathOperator{\dist}{dist}
\def\intT{\int_{\T^d}}
\def\intR{\int_{\R^d}}
\begin{document}

\title[Optimal regularity for the 2D Euler equations in the Yudovich class]{Optimal regularity for the 2D Euler equations in the Yudovich class}

\author[N.~De Nitti]{Nicola De Nitti}
\address[N.~De Nitti]{EPFL, Institut de Math\'ematiques, Station 8, 1015 Lausanne, Switzerland.}
\email[N.~De Nitti]{nicola.denitti@epfl.ch}

\author[D.~Meyer]{David Meyer}
\address[D.~Meyer]{Universit\"at M\"unster, Institut f\"ur Analysis und Numerik, Orl\'eans-Ring 10, 48149 M\"unster,  Germany.}
\email[D.~Meyer]{dmeyer2@uni-muenster.de}

\author[C.~Seis]{Christian Seis}
\address[C.~Seis]{Universit\"at M\"unster, Institut f\"ur Analysis und Numerik, Orl\'eans-Ring 10, 48149 M\"unster, Germany.}
\email[C.~Seis]{seis@uni-muenster.de}

\subjclass[2020]{35Q35, 76B03, 76B47, 35B65.}
\keywords{Euler equations; transport equation;  non smooth vector fields; Littlewood--Paley; propagation of regularity; Bahouri--Chemin's vortex patch.}

\begin{abstract}
We analyze the optimal regularity that is exactly propagated by a transport equation driven by a velocity field with a BMO gradient. As an application, we study the 2D Euler equations in case the initial vorticity is bounded. The sharpness of our result for the Euler equations follows from a variation of Bahouri and Chemin's vortex patch example.
\end{abstract}

\maketitle
%\tableofcontents

\section{Introduction and main results}
\label{sec:intro}

Our main motivation in the present work is a regularity question for the two-dimensional Euler equations that describe the motion of a perfect, incompressible, inviscid fluid  (see \cite{MR1867882,MR2338368,MR1245492,MR1340046}). We study their vorticity formulation on the two-dimensional torus\footnote{ We work on the torus $\mathbb T^d$  in order to avoid integrability issues at infinity and obtain global estimates. However, here, as in the previous related works \cite{MR4263701,2203.10860}, the results hold in the space $\mathbb R^d$ as well, at the price of extra work in localizing the estimates appropriately.}, $\T^2 = \R^2 / 2\pi \Z^2$: 
\begin{equation}\label{eq:vorticity}
\begin{cases}
    \partial_t \omega + u\cdot \grad \omega = 0, & t >0, \ x \in \T^2,\\
    \omega(0,x) = \omega_0(x), & x \in \T^2,
\end{cases}
\end{equation}
where $\omega=\omega(t,x)\in\R$ is the scalar vorticity distribution of the ideal fluid, $\omega_0=\omega_0(x) \in \R$ is its initial value, and $u=u(t,x)\in\R^2$ is its (divergence-free) velocity vector field. We recall that the latter can be recovered from the vorticity via the Biot--Savart law, \(u=\nabla^\perp\Delta^{-1} \omega\), which, on the torus, takes the simple form
\begin{equation}\label{eq:bs-torus}
\hat u(t,\eta) = -\frac{i\eta^{\perp}}{|\eta|^2} \hat \omega(t,\eta), \quad t >0, \ \eta \in 2\pi \mathbb{Z}^2,
\end{equation}
where $\hat f = \hat f(\eta)$ denotes the Fourier transform of a function $f=f(x)$, and $\eta^{\perp}$ is the counter-clockwise rotation by ninety degrees of the wave-number $\eta$.

Our goal is to identify the optimal regularity that is  propagated by the nonlinear dynamics \cref{eq:vorticity} under the mere assumption that the vorticity distribution is uniformly bounded, so that
\begin{equation}\label{eq:linfty-vort}
\|\omega(t,\cdot)\|_{L^{\infty}(\T^d)} = \|\omega_0\|_{L^{\infty}(\T^d)}<+\infty, \quad\text{for all}\quad t >0.
\end{equation}
This setting is particularly interesting because $L^\infty$ is a critical space in terms of the vorticity: the velocity field associated with a bounded vorticity distribution via \cref{eq:bs-torus} fails to be Lipschitz continuous in the spatial variable; instead, it is only $\log$-Lipschitz, i.e., there exists $L>0$ (depending on $\omega_0$) such that 
\begin{equation}\label{eq:log-lip}
|u(t,x) - u(t,y)| \le L |x-y|\left(1+\log \frac{4\pi}{|x-y|}\right),\quad \text{for all} \quad t >0, \ x,\,y\in \T^2.
\end{equation}
Yet, it is known since Yudovich's work   \cite{Yudovich} that this $\log$-Lipschitz regularity is sufficient in order to guarantee well-posedness of the Euler equations \cref{eq:vorticity}: there exists one and only one solution belonging to  $L^{\infty}\left([0, +\infty) \times \mathbb{T}^2\right) \cap C\left([0, +\infty) ; L^1\left(\mathbb{T}^2\right)\right)$ and satisfying the weak formulation of \cref{eq:vorticity}, i.e., 
\[
\int_{\mathbb{T}^2} \omega(t,x) \varphi(t,x) \, \mathrm{d} x-\int_{\mathbb{T}^2} \omega_0(x) \varphi(0,x) \, \mathrm{d} x=\int_0^t \int_{\T^2} \omega(s,x) u(s,x) \cdot \nabla \varphi(s,x) \, \mathrm{d} x \, \mathrm{d} s
\]
for all $\varphi \in C^{\infty}\left(\mathbb{T}^2\right)$. 

However, as the advecting field is not Lipschitz continuous, the solution behaves quite poorly with regard to its regularity: In \cite{MR1288809}, Bahouri and Chemin observed that the regularity deteriorates during the evolution. In fact, any solution $\theta=\theta(t,x)\in\R$ to the transport equation
\begin{equation}\label{eq:transport}
\begin{cases}
\partial_t \theta + u \cdot \nabla \theta  = 0,  & t >0, \ x \in \T^d, \\
\theta(0,x) = \theta_0(x), & x \in \T^d, 
\end{cases}
\end{equation}
(in which, for $d=2$, $\theta=\omega$ is, of course, admissible) may have exponentially-decreasing H\"older regularity, i.e., it may only lie in the H\"older space $C^{e^{-\kappa t}}$ at time $t >0$, for some constant $\kappa>0$ depending on the size of the velocity. Such losing estimates that degenerate over time can also be stated in Sobolev spaces: if  $\theta_0 \in W^{s+,\,p}$, with $0<s \le 1$ and $p \ge 1$, then the solution may not belong to the same space, but 
\[
\theta(t,\cdot)\in W^{s(t),\,p}, \quad\text{with}\quad  
s(t) \coloneqq se^{-\kappa t}, \quad t >0.
\]
More recently, in \cite{MR4263701}, Bru\`e and Nguyen revisited the regularity problem for the linear transport equation \cref{eq:transport} (in arbitrary space-dimensions) in the case where the velocity gradient is exponentially integrable, i.e., there exists $\beta \in\R$ such that
\begin{align}\label{eq:exp-intro}
    \sup_{t>0}\int_{\T^d} \exp\left(\beta |\grad u(t,x)|\right)\dd x <+\infty\,.
\end{align}
We stress that such a $\beta$ exists for velocity fields \cref{eq:bs-torus} if the vorticity is bounded\footnote{ The exponential integrability condition \cref{eq:exp-intro} implies that $u$ satisfies a log-Lipschitz estimate (by \cite[Lemma A.1]{MR4263701}). In particular, it is Osgood continuous, i.e.,  \begin{align}
|u(x)-u(y)|\lesssim \eta(|x-y|), 
\end{align}
where $\eta$ is a function such that \begin{align}
\int_0^1\frac{1}{\eta(s)}\, \d s=+\infty.
\end{align}
This implies the uniqueness of the flow associated with $u$ (see \cite{MR2439520}). We also point to \cite{zbMATH07739119} for some extensions of these results to cases of sub-exponential integrability.}, and thus, Bru\`e and Nguyen's findings do also apply to solutions to the Euler equation \cref{eq:vorticity}. Extending the earlier work \cite[Theorem 4]{MR3912727}, they showed that, if  $\theta_0 \in W^{s,\,p}$, with $0<s \le 1$ and $p \ge 1$, then 
\[
\theta(t,\cdot)\in W^{s(t),\,p}, \quad\text{with}\quad  
s(t) \coloneqq  \frac{s}{1+\kappa t}, \quad t >0.
\]

The loss of regularity can be also put on the integrability. Indeed, in \cite{Jeong}, Jeong  produced solutions in the Yudovich class such that the gradient of the vorticity loses integrability continuously in time: $\omega(t,\cdot) \notin W^{1, p(t)}$, with $t \mapsto p(t)$ decreasing continuously  and $1 \leq p(0)<2$. In \cite{2206.14237}, similar results were proven for general Osgood velocity fields. In \cite{MR2231013}, Danchin showed similar estimates with a loss of regularity for the transport equation in critical Besov spaces.

However, outside of the Yudovich class, an instantaneous loss of fractional regularity for \cref{eq:vorticity} may occur, as shown in \cite{Cordoba} for $d=2$ and initial vorticity in $H^s$, with $0 < s < 1$. 

Analogously, if the velocity field is not log-Lipschitz, but merely $W^{1,p}$, (with $1 \le p < \infty)$, the linear transport equation \cref{eq:transport} is well-posed, e.g., in $L^\infty((0,+\infty);L^\infty(\T^d))$, % in $L^\infty([0,+\infty);L^q(\T^d) (with $1/p + 1/q \le 1$) 
owing to DiPerna--Lions' theory of renormalized solutions (see \cite{DiPernaLions89}), but may instantaneously lose any initial Lipschitz and Sobolev regularity, even of fractional order, during its evolution (see \cite{MR3933614}). Yet, some very weak regularity is propagated. First, in \cite{MR2369485}, Crippa and De Lellis observed that a quantitative Lusin--Lipschitz estimate holds for regular Lagrangian flows (in the sense of Ambrosio, see \cite{Ambrosio04}), provided that $p>1$. At the Eulerian level, this result translated into the propagation of a logarithmic Sobolev derivative, as shown in \cite{MR4377866,2203.10860,MR3995045}; see also \cite{Seis17} for dual results. For $W^{1,1}$ velocity fields, the propagation of regularity is a major open problem related to Bressan's mixing conjecture (see \cite{MR2033002})\footnote{ However, we refer to \cite{Bianchini06} for a related positive result in a one-dimensional setting.}; the only known results show that the regular Lagrangian flow is differentiable in measure\footnote{ Further results are available in the special case of two-dimensional, divergence-free, autonomous vector fields; see \cite{BM21,zbMATH07399532}.} (see \cite{MR2044334, zbMATH05228217} and also \cite{MR4514062} for the case of nearly incompressible BV vector fields). 

Our first main result concerns the optimal regularity that is \emph{exactly propagated} by the transport equation \cref{eq:transport} (in any space-dimension $d\ge 2$) in the setting where $\nabla u(t,\cdot)$ is exponentially-integrable: it is at most a derivative of the order $\exp(s\log^{1/2}|\grad|)$ that can be controlled \emph{without loss} during the evolution, where $s>0$ is arbitrarily fixed. More precisely, given $a,\,s \ge 0$, we measure this type of regularity in terms of the Sobolev semi-norm
\begin{equation}\label{eq:sobolev-norm}
\norm{\theta}_{H^{\exp, a,s}(\T^d)}\coloneqq \left(\sum_{\eta \in 2\pi \Z^d\setminus\{0\}} \exp(2s\log^{a}|\eta|)  |\hat \theta(\eta)|^2\right)^{1/2}.
\end{equation}
The space of integrable fuctions for which this norm is finite will be denoted by $H^{\exp,a,s}(\T^d)$.\footnote{ Notice that for $a=1$, this is just the standard semi-norm on the fractional Sobolev space $H^{s} \equiv W^{s,2}$.}

In the interest of brevity, we state our  theorem assuming $\nabla u(t,\cdot) \in \BMO(\T^d)$. Indeed, it is well known that singular integrals/Fourier multipliers of $L^\infty$-functions belong to $\BMO$ (see, e.g., \cite[Chapter IV, Section 4.1]{MR1232192}), not to $L^\infty$, in general, though; yet, $\BMO$ functions are still locally in every $L^p$ space (for $p<\infty$) and even exponentially integrable\footnote{ Owing to John--Nirenberg's inequality (see \cite[Chapter IV, Section 1.3]{MR1232192}), \cref{eq:exp-intro} follows from assuming $u \in L^\infty((0,+\infty); \BMO(\T^2))$.}. However, this assumption can actually be relaxed to assuming that $\nabla u(t,\cdot)$ and some Littlewood--Paley sums of $\nabla u(t,\cdot)$ are exponentially integrable: we point to  \crefrange{ass:v-1}{ass:v-t} below for the precise conditions.

\begin{theorem}[Propagation of regularity for the transport equation]\label{th:transport} 
Let us assume that the velocity $u$ satisfies 
\begin{align}
 \label{ass:div-free-theo}  & \div u = 0, \\
 \label{ass:bmo-theo}  & \grad u \in L^1((0,+\infty); \BMO(\T^d)). 
\end{align}
Then, the (unique) renormalized solution $\theta$ of the Cauchy problem  \cref{eq:transport}, with initial data $\theta_0$ such that $\theta_0 \in L^\infty(\T^d) \cap H^{\exp, a, s}(\T^d)$ and $\fint_{\T^d} \theta_0 = 0$, satisfies the estimate 
\begin{align}\label{eq:claim-regularity}
\norm{\theta(t,\cdot)}_{H^{\exp, a,s}(\T^d)} \le C\, \exp\left(C\int_0^t \norm{\nabla u(\tau,\cdot)}_{\BMO(\T^d)}   \dd \tau\right)\left(\norm{\theta_0}_{H^{\exp, a,s }(\T^d)}+\norm{\theta_0}_{L^\infty(\T^d)}\right),
\end{align}
for all $a \le 1/2$ and all $s>0$, with a constant $C>0$ depending on $a$, $s$, and $d$. 
\end{theorem}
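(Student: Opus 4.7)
I would prove \cref{th:transport} by a Littlewood--Paley energy estimate weighted by the Fourier multiplier of $H^{\exp,a,s}$. Set $\theta_q:=\Delta_q\theta$ and $\lambda_q:=\exp(s(\log 2^q)^a)$, so that $\|\theta\|_{H^{\exp,a,s}}^2$ is equivalent to $\sum_q\lambda_q^2\|\theta_q\|_{L^2}^2$. Applying $\Delta_q$ to \cref{eq:transport} and using $\div u=0$, one obtains the block energy identity
\[
\tfrac12\tfrac{d}{dt}\|\theta_q(t)\|_{L^2}^2=-\int_{\T^d}\theta_q\,[\Delta_q,u\cdot\nabla]\theta\,dx,
\]
so that multiplying by $\lambda_q^2$, summing in $q$, and applying Cauchy--Schwarz produces
\[
\tfrac{d}{dt}\|\theta(t)\|_{H^{\exp,a,s}}^2\le 2\sum_q\lambda_q^2\,\|\theta_q\|_{L^2}\,\|[\Delta_q,u\cdot\nabla]\theta\|_{L^2}.
\]

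\textbf{$\BMO$ commutator estimate.} The core ingredient is a bound on $\|[\Delta_q,u\cdot\nabla]\theta\|_{L^2}$ in terms of $\|\nabla u\|_{\BMO}$ and $L^2$ norms of $\theta$ at nearby frequencies, carrying only a polylog-in-$q$ loss. I would use Bony's decomposition $u\cdot\nabla\theta=T_u\cdot\nabla\theta+T_{\nabla\theta}\cdot u+R(u,\nabla\theta)$; the critical piece is the low-frequency-in-$u$ paraproduct $[\Delta_q,S_{q-N_0}u\cdot\nabla]\theta_{\sim q}$, which I rewrite in the \emph{difference} form
\[
\int\phi_q(x-y)\bigl(u(y)-u(x)\bigr)\cdot\nabla\theta_{\sim q}(y)\,dy,
\]
and estimate via the $L^2$-characterization of $\BMO$ (namely $|B|^{-1}\!\int_B|f-f_B|^2\lesssim\|f\|_{\BMO}^2$ applied to $f=\nabla u$ on balls of radius $2^{-q}$) together with a Poincar\'e-type inequality to pass from $\nabla u$ to $u(y)-u(x)$. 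Combined with standard resonance/high-low paraproduct bounds and the single-block estimate $\|\Delta_{q'}\nabla u\|_{L^\infty}\lesssim\|\nabla u\|_{\BMO}$, this yields a commutator bound of the shape
\[
\|[\Delta_q,u\cdot\nabla]\theta\|_{L^2}\le C\|\nabla u\|_{\BMO}\Bigl(\,\sum_{|q'-q|\le N_0}\|\theta_{q'}\|_{L^2}+\alpha_q\|\theta\|_{L^\infty}\Bigr),
\]
where $\alpha_q$ is a slowly-growing (polylogarithmic) sequence encoding the low-frequency tail of $u$, on which $\BMO$ fails to yield a uniform $L^\infty$ bound.

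\textbf{Closure, Gronwall, and main obstacle.} For $a\le 1/2$ the weight ratio $\lambda_{q+k}/\lambda_q$ stays uniformly bounded for $|k|\le N_0$, so a Schur test on the diagonal double sum closes with bound $C\|\nabla u\|_{\BMO}\|\theta\|_{H^{\exp,a,s}}^2$. The $L^\infty$-remainder is controlled by Cauchy--Schwarz exploiting that $\alpha_q\lambda_q^{-1}$ is square-summable exactly in the range $a\le 1/2$, together with the conservation $\|\theta(t)\|_{L^\infty}=\|\theta_0\|_{L^\infty}$ along the divergence-free transport. Collecting the two contributions gives
\[
\tfrac{d}{dt}\|\theta(t)\|_{H^{\exp,a,s}}^2\le C\|\nabla u(t)\|_{\BMO}\bigl(\|\theta(t)\|_{H^{\exp,a,s}}^2+\|\theta_0\|_{L^\infty}^2\bigr),
\]
and Gronwall's lemma produces \cref{eq:claim-regularity}. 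The principal technical hurdle is the commutator bound with the correct polylog loss: a naive John--Nirenberg/Bernstein argument (using $\|\nabla u\|_{L^p}\lesssim p\|\nabla u\|_{\BMO}$ optimized against a Bernstein factor $2^{qd/p}$) produces a loss of order $q$, which would only permit $a\le 1$; the improvement to the sharp $\sqrt{q}$ --- and hence to the threshold $a=1/2$ --- comes from the commutator cancellation embodied in the difference $u(y)-u(x)$ together with the \emph{quadratic} ($L^2$) face of the $\BMO$ condition.
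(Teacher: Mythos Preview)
Your overall scheme---weighted Littlewood--Paley energy plus a commutator---is the right skeleton, but the central step, the block-wise $L^2$ commutator bound with ``polylog loss $\alpha_q$'', is not justified, and the place where you locate the threshold $a\le 1/2$ is inconsistent with the structure of your own estimate. Concretely: your remainder contributes $\sum_q\lambda_q^2\|\theta_q\|_{L^2}\,\alpha_q\|\theta\|_{L^\infty}$, and Cauchy--Schwarz turns this into $\|\theta\|_{H^{\exp,a,s}}\|\theta\|_{L^\infty}\bigl(\sum_q\alpha_q^2\lambda_q^2\bigr)^{1/2}$, which diverges for \emph{every} $a>0$ because $\lambda_q$ grows; so ``$\alpha_q\lambda_q^{-1}$ square-summable exactly for $a\le 1/2$'' cannot be the mechanism. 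More seriously, the diagonal piece of your claimed bound, $\|[\Delta_q,u\cdot\nabla]\theta\|_{L^2}\lesssim\|\nabla u\|_{\BMO}\sum_{|q'-q|\le N_0}\|\theta_{q'}\|_{L^2}$ with \emph{no} loss, does not follow from the ingredients you list: in the difference form, Taylor expansion of $u(y)-u(x)$ at scale $2^{-q}$ leaves a pointwise factor $\nabla u(x)$, and $\nabla u\in\BMO$ is not in $L^\infty$; the $L^2$-$\BMO$ oscillation inequality controls fluctuations of $\nabla u$, not its size, and does not by itself upgrade this to a uniform $L^2\to L^2$ bound on the commutator.

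The paper does \emph{not} reduce to a block-wise $L^2$ commutator estimate. It keeps the full trilinear expression $\sum_k m'(k)\int\theta_k^{\ge}\,[\psi_{k-1}\!*,u\cdot]\nabla\theta$ and estimates it via the Orlicz H\"older inequality $\|fgh\|_{L^1}\lesssim\|f\|_{L^{\exp}}\|g\|_{L^2\log L}\|h\|_{L^2\log L}$, placing (square functions of) $\nabla u$ in $L^{\exp}$---which John--Nirenberg genuinely gives from $\BMO$---and the two $\theta$-factors in $L^2\log L$. The threshold $a\le 1/2$ then arises because each passage from $L^2$ to $L^2\log L$ costs a factor $\sim k^{a/2}$ against the weight $\exp(sk^a)$ (this is the content of the paper's two interpolation lemmas for the sup and the square function in $L^2\log L$), and these two losses of $k^{a/2}$ must be absorbed by the $k^{a-1}$ gain coming from the derivative weight $m'(k)\sim k^{a-1}\exp(2sk^a)$ in the equivalent norm $\mathbf{B}^{\exp,a,s}$; this forces $a\le 1-a$. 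By collapsing to a bilinear $L^2$ estimate on the commutator before integrating against $\theta_q$, your scheme discards precisely the trilinear structure that makes the Orlicz pairing available.
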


We prove \cref{th:transport} using the approach of \cite{2203.10860}, which quantifies the commutator lemma by DiPerna and Lions (see \cite{DiPernaLions89}) via Littlewood and Paley's decomposition of functions into frequency blocks. 

\begin{remark}[Advection-diffusion equation]
The approach used to prove  \cref{th:transport} extends easily to deal with advection-diffusion equations,  
\begin{equation}
    \label{302}
    \begin{cases}
    \partial_t \theta + u\cdot \nabla \theta = \kappa \Delta\theta, & t >0, \ x \in \T^d, \\
    \theta(0,x) = \theta_0(x), & x \in \T^d,
    \end{cases}
\end{equation}
where  $\kappa> 0$ is the diffusivity constant. In this case, following \cite{2203.10860}, we gain additionally control over the dissipation term
\[
\left(\kappa \int_0^t \norm{\grad \theta(\tau,\cdot)}_{H^{\exp,a,s}(\T^d)}^2\dd \tau \right)^{\frac12},
\]
which can be added on the left-hand side of the regularity estimate \cref{eq:claim-regularity}. 
\end{remark}

Next, we address the question of the sharpness of \cref{th:transport}, which we will answer in the particular case of the Euler equations. For $a > 1/2$, we show that there exists an initial datum $\omega_0\in L^\infty(\T^d)\cap H^{\exp,a,s}(\T^d)$ such that the Euler vorticity  $\omega(t,\cdot)$ does not belong to  $H^{\exp,a,s}(\T^d)$.
\begin{theorem}[Sharpness of the regularity result]\label{th:sharp}
    The result of \cref{th:transport} does not hold for $a > 1/2$. That is, for any $s >0$ and $a>1/2$, there exists an initial datum $\omega_0\in L^\infty(\T^2)\cap H^{\exp,a,s}(\T^2)$ and time-horizon $T>0$ such that the solution $\omega(t,\cdot)$ of the vorticity equation \cref{eq:vorticity} does not belong to $H^{\exp,a,s}(\T^2)$ for all $0 <t< T$. 
\end{theorem}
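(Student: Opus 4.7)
My plan is to construct the sharpness example as a variation of the Bahouri--Chemin antisymmetric vortex patch, whose associated velocity field has a hyperbolic stagnation point at the origin with logarithmic stretching. The critical exponent $a=1/2$ in \cref{th:transport} should correspond to a Fourier amplification of order $\exp(c\sqrt{\log|\eta|})$ produced by this stretching, so for $a>1/2$ the weight $\exp(s\log^a|\eta|)$ grows strictly faster than what the flow can absorb, leading to immediate loss of the $H^{\exp,a,s}$ norm.

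First I would take $\omega_0(x) = \operatorname{sgn}(x_1)\operatorname{sgn}(x_2)\,\chi(|x|)$, where $\chi$ is a smooth radial cutoff supported in a small neighborhood of the origin (extended periodically to $\T^2$). Because the Fourier transform factorizes as $|\hat\omega_0(\eta)| \lesssim (1+|\eta_1|)^{-1}(1+|\eta_2|)^{-1}$, a direct computation splitting between the diagonal region $|\eta_1|\sim|\eta_2|$ and the strip regions where one coordinate is bounded shows that $\omega_0 \in L^\infty(\T^2) \cap H^{\exp,a,s}(\T^2)$ for every $a<1$, in particular covering the prescribed $a \in (1/2,1)$ and $s > 0$. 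Next, the odd symmetries under $x_i \mapsto -x_i$ are preserved by the Euler evolution, so the coordinate axes are invariant under the Yudovich flow; a standard Biot--Savart expansion yields
\[
u(t,x) = \alpha(t)\,\log\tfrac{1}{|x|}\,(x_1,-x_2) + O(|x|)
\]
near the origin, for some $\alpha(t)>0$ bounded below on a short time interval. Integrating the ODE on the axes then gives the characteristic super-exponential behaviour $X_1(t,x_1)\sim x_1^{\exp(-\alpha t)}$ and $X_2(t,x_2)\sim x_2^{\exp(\alpha t)}$.

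To close the argument, I would write $\omega(t,\cdot) = \omega_0 \circ X^{-1}(t,\cdot)$ and analyze it via Littlewood--Paley decomposition. The super-exponential compression $y_2 \mapsto y_2^{\exp(\alpha t)}$ acts on a wave packet of frequency $N$ localized near the axes by amplifying its dyadic mass by a factor of order $\exp(c_t\sqrt{\log N})$, which is precisely the borderline weight $\exp(s\log^{1/2}|\eta|)$ allowed by \cref{th:transport}. For $a>1/2$ the weight $\exp(s\log^a|\eta|)$ is strictly larger than this amplification, so by tuning the profile $\chi$ in the class above so that $\omega_0$ sits \emph{just} inside $H^{\exp,a,s}$ (with a logarithmic margin in its Littlewood--Paley coefficients), the amplification along the flow immediately destroys the weighted summability and forces $\omega(t,\cdot) \notin H^{\exp,a,s}(\T^2)$ for every $0<t<T$.

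The main obstacle is the rigorous justification of the Fourier amplification rate $\exp(c_t\sqrt{\log|\eta|})$. This requires translating the Lagrangian logarithmic stretching into sharp estimates on the Fourier transform of $\omega_0 \circ X^{-1}(t,\cdot)$, most likely via a stationary-phase or oscillatory-integral analysis of the composition, or via a Littlewood--Paley splitting adapted to the hyperbolic structure of the flow near the origin; one must carefully track how wave packets localized at distances of different scales from the axes contribute to each dyadic frequency block after composition with the stretched flow map.
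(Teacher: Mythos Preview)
Your proposal has a genuine gap at the level of the construction: the initial datum you choose is essentially invariant under its own Euler flow near the hyperbolic point, so it cannot exhibit any norm inflation there. Concretely, the odd symmetries force the coordinate axes to be invariant, hence the flow $X(t,\cdot)$ maps each quadrant to itself. But then $\omega_0\circ X^{-1}(t,x)=\operatorname{sgn}(x_1)\operatorname{sgn}(x_2)$ for all $x$ in the support of $\chi$ (where $\chi\equiv 1$), i.e., $\omega(t,\cdot)=\omega_0$ in a full neighbourhood of the origin. The only evolution takes place in the transition region of the cutoff, far from the stagnation point, where the velocity is Lipschitz and no loss can occur. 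In particular the sentence ``the super-exponential compression acts on a wave packet of frequency $N$'' has no object to act on: your datum contains no wave packets, only the static sign pattern, and your proposed ``tuning of $\chi$'' cannot change this because $\chi$ is smooth.

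What the paper does instead is to keep the Bahouri--Chemin patch $\bar\omega$ as a background (providing the hyperbolic flow) and \emph{add} a carefully designed perturbation $\omega_*=\sum_n h_n\,\zeta_n(x)\sin(d_n x_1)$, consisting of oscillating blobs supported in rectangles $Q_n$ at distance $\sim 2^{-2n}$ from the origin. These blobs are genuinely transported and compressed in the $x_1$-direction by a factor $e^{ctn}$, which raises their effective frequency from $d_n$ to $d_n e^{ctn}$. With $d_n=2^{n^{\delta}}$ and $h_n$ saturating membership in $H^{\exp,a,s}$, the resulting gain in the weighted norm is $\exp\bigl(c\,t\,n^{(a-1)\delta+1}\bigr)$ per blob; this diverges precisely when one can choose $\delta$ with $1/a<\delta<1/(1-a)$, i.e., when $a>1/2$. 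The loss is then detected not in Fourier space directly but through a pointwise Calder\'on-type characterisation of the Besov scale, which avoids the oscillatory-integral analysis you anticipate as the main obstacle. The missing idea in your plan is thus the introduction of these auxiliary oscillations placed at prescribed scales; without them the construction is static and the argument cannot start.
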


The construction that allows us to prove \cref{th:sharp} is based on the vortex-patch example used by  Bahouri and Chemin in \cite{MR1288809}. The result does not contradict the above-mentioned losing estimates in $H^s$-spaces, as functions in $H^{\exp,a,s}$ do not necessarily have any fractional regularity.

\begin{remark}[Loss of fractional Sobolev regularity]\label{rk:Hs-bu}
    Analogously, for $s< \frac{1}{2}$, we can also establish an instantaneous blow-up of any $H^{s}$ norm of the solution of \cref{eq:vorticity} (see \cref{rk:fractional-spaces} for more details). 
\end{remark}

Finally, we stress that vector fields  in $W^{\frac{d}{p}+1,\,p}$ satisfy \cref{eq:exp-intro} (see \cite[Theorem 2]{zuazua2002log}). This is precisely the threshold above which the Sobolev embedding and Calder\'on--Zygmund estimates guarantee that $\grad u(t,\cdot)$ is a Lipschitz continuous function. If the initial velocity belongs to $W^{s,p}$, with $s>\frac{d}{p}+1$, then   $\grad u(t,\cdot)$ is Lipschitz continuous, and much better estimates are available.  Local well-posedness of the Euler system in these Sobolev spaces was established first in \cite{MR271984} (for $p=2$) and in \cite{MR344713} (for $p >1$) in a compact domain, and then in \cite{MR481652, MR904939} in the whole space\footnote{ More refined results are available in Besov-type spaces. In \cite{MR1717576,MR1664597}, Vishik constructed global solutions of the two-dimensional Euler equations in Besov space $B_{p, 1}^{\frac{2}{p}+1}$ with $1<p<\infty$ (with regularity index $s=\frac{2}{p}+1$, which is critical in two space-dimensions). For $d \geq 2$, local well-posedness in the critical Besov space $B_{p, 1}^{\frac{d}{p}+1}$ with $1<p<\infty$, was obtained in \cite{MR2072064}; on the other hand, the unique local solvability for $p=\infty$, i.e., in $B_{\infty, 1}^1$ and for $p=1$, i.e., in $B_{1,1}^{d+1}$ was proved in \cite{MR2097579} and  \cite{MR3071222}, respectively.}. These results actually hold globally in time, owing to Beale--Kato--Majda's criterion (see \cite{MR763762}). This does not happen at critical regularity: in \cite{MR3359050,MR4300224},  Bourgain and Li proved ill-posedness of the Euler system in $W^{\frac{d}{p}+1,p}$ (see also \cite{MR4176667, MR3625192,MR3812093,MR4470735} for further developments\footnote{We also refer to \cite{MR2566571} for an earlier ill-posedness result in Besov spaces $B_{r, \infty}^s$, with $s>0$ if $r>2$ and $s>d\left(\frac{2}{r}-1\right)$ if $1 \leq r \leq 2$.}).  

The end-point cases $p=\infty$ (i.e., spaces such as $C^{m, \alpha}$, with $m \ge 1$ and $0<\alpha<1$) were similarly investigated. The local well-posedness in $C^{m, \alpha}$ (with $m \geq 1$ and $0<\alpha<1$) was proven in \cite{MR1545189,gunther1927} and the global-in-time result, for $d=2$,  in \cite{MR1545430,MR1545431}. Although the solution in this case is globally regular, infinite-time growth of their derivatives and small-scale creation appear as well: the upper-bound on the growth of $\nabla u(t,\cdot)$ is double-exponential in time (due to \cite{zbMATH03226068}) and, in \cite{MR3245016}, Kiselev and \v{S}ver\'{a}k indeed produced an example of such growth. On the other hand, ill-posedness was established in \cite{MR3320889,MR4065655} for initial velocities in $C^m$ or $C^{m-1,1}$, with $m \ge 1$ being an integer.

\subsection{Outline}
\label{ssec:outline}

The paper is organized as follows. In  \cref{sec:prelim}, we introduce some tools and definitions that are needed in the proofs.  \cref{th:transport} and \cref{th:sharp} are proven in \cref{sec:proof-hyp} and \cref{sec:proof-counterexample}, respectively. \medskip

\emph{Notation.} We let $C$ denote an implicit constant which is allowed to depend on $a,s,d$ but not on the other parameters unless explicitly mentioned and which is allowed to change values from line to line. We will often write $g\lesssim h$ for two reals $g$ and $h$ if there exists such a constant $C$ depending possibly on $d$, $s$, $a$ such that $f\le Cg$.

\section{Preliminaries}
\label{sec:prelim}

In this section, we introduce some preliminary notions that are needed to prove \cref{th:transport} and \cref{th:sharp}.

\subsection{Littlewood--Paley decomposition}
\label{ssec:lp}

In this section, we recall the \emph{Littlewood--Paley decompositions of periodic functions} (see also \cite{2203.10860} and \cite{MR3243734}).

The \emph{Fourier transform} of a  function $\theta$ on the torus $\T^d \coloneqq  \R^d/2\pi\Z^d$  and its \emph{Fourier representation} are given by
\[
\widehat \theta(\eta) \coloneqq \fint_{\T^d} e^{-i\eta\cdot x} \theta(x)\, \dd x,\quad \theta(x) \coloneqq  \sum_{\eta\in \Z^d} e^{i\eta\cdot x}\widehat \theta(\eta),
\]
respectively, where  $\eta\in \Z^d$ is the wave number and  $x\in \T^d$. 
For a function $\phi$ on the full space $\R^d$, these quantities are defined by 
\[
\widehat \phi(\xi) = \frac1{(2\pi)^{\frac{d}2}} \int_{\R^d} e^{-i\xi\cdot x}\phi(x)\, \dd x,\quad \phi(x) =  \frac1{(2\pi)^{\frac{d}2}}\int_{\R^d} e^{i\xi\cdot x}\widehat \phi(\xi)\, \mathrm{d}\xi,
\]
where now $\xi \in \R^d$ is the frequency and  $x\in \R^d$.

The convolution of $\theta$ and $\phi$ as above is defined as 
\[
(\phi\ast\theta)(x) \coloneqq \int_{\R^d} \phi(x-y)\theta(y)\, \mathrm{d}y = \intT\Big(\sum_{\eta\in\Z^d} \phi(x-z-2\pi\eta)\Big)\theta(z)\, \mathrm{d}z,
\]
(note that, in the first equality, $\theta$ is extended periodically from $\T^d$ to $\R^d$; on the other hand, $\phi \ast \theta$ is periodic in $x$, and thus can be interpreted as a function on $\T^d$).
The Fourier transform of the convolution $\phi \ast \theta$ obeys the  product rule
\[
\widehat{(\phi\ast \theta)}(\eta) = (2\pi)^{d/2}\widehat \phi(\eta)\widehat \theta(\eta), \quad\text{for all }\eta\in \Z^d.
\]

The \emph{Littlewood--Paley decomposition} of a function $\theta$ that has zero mean, i.e., $\hat \theta(0)=0$, is given by 
\[
\theta = \sum_{k\in\N} \theta\ast \phi_k,
\]
where the $\phi_k$'s are a family of rotationally symmetric Schwartz functions that are generated by a Schwartz function $\phi$ whose Fourier transform $\widehat \phi$ is supported in $B_1(0)$ and satisfies  $\widehat{\phi}(\xi)=1$ for $\xi\in \overline{B_{\frac{1}{2}}(0)}$. More specifically, for any $k\in\N$, the $\phi_k$'s are given by  
\[\phi_k\coloneqq 2^{kd}\phi(2^k\cdot)-2^{(k-1)d}\phi(2^{k-1}\cdot),\] so that 
\[
 \sum_{k\in \N} \widehat \phi_k (\xi)=1,\quad \mbox{for } |\xi|\ge 1,
\] holds. We also note that the Fourier transform of $\phi_k$  is supported on a dyadic annulus, 
\begin{equation}
\label{eq:supp-fourier}
\supp \widehat \phi_k \subset B_{2^{k}}(0)\setminus \overline{B_{2^{k-2}}(0)},
\end{equation}
and thus, each $\phi_k$ has overlapping frequency support only with its direct neighbors, which leads to the useful identity
\begin{equation}
\label{27}
\phi_k = \phi_k\ast \left(\phi_{k-1}+\phi_k+\phi_{k+1}\right).
\end{equation}
The above definition of $\phi_k$ can be simply extended to $k=0$ or any other integer.

For any $k\in\N$, we will use the notation $\theta_k\coloneqq \theta*\phi_k$, indicating the part of $\theta$ whose frequencies are concentrated in the $k$-th dyadic annulus. We furthermore consider the \emph{low pass filters}  $\psi_k\coloneqq \phi+\sum_{j=1}^k\phi_j=2^{kd}\phi(2^k\cdot)$ and  define with them the \emph{high-frequency parts} $\theta_k^\geq\coloneqq \theta-\theta*\psi_{k-1} = \sum_{j=k}^{\infty} \theta_j$ and the \emph{low-frequency parts} $\theta_k^{\leq } \coloneqq \theta\ast \psi_k$, so that $\theta  = \theta_k^\leq + \theta_{k+1}^\geq$.

\subsection{Orlicz and Zygmund spaces and their Luxemburg norms}

The \emph{Orlicz space of exponentially integrable functions} is defined as
\begin{align}\label{def:lexp}
    L^{\exp}(\T^d)\coloneqq \left\{f \in L^1(\T^d): \exists \beta >0 \text{ s.t.\ } \int_{\T^d} \exp\left(\beta |f(x)| \right) \dd x < \infty\right\}.
\end{align}
It is a Banach space endowed with the \emph{Luxemburg norm} 
\begin{align*}
    \| f\|_{L^{\exp}(\T^d) } \coloneqq  \inf\left\{h: \int_{\T^d} \exp\left(\tfrac{|f(x)|}{h} \right)  \dd x \le 1 \right\}.
\end{align*}

We shall  define the \emph{logarithmic Zygmund space} $L^p\log L(\T^d)$, for $p \in [1, \infty)$, as
\begin{align}\label{def:llog}
    L^{p}\log L(\T^d)\coloneqq \left\{f \in L^1(\T^d): \, 
    \int_{\T^d} |f(x)|^p \log(| f(x)| +2) \dd x < \infty\right\}, 
\end{align}
which is a Banach space endowed with the Luxemburg norm 
\begin{align}  
    \| f\|_{L^{p}\log L(\T^d)} \coloneqq  \inf\left\{h: \int_{\T^d} \left(\tfrac{|f(x)|}{h}\right)^p \log\left(\tfrac{|f(x)|}{h} +2\right)  \dd x \le 1\right\}.
\end{align}
The completeness of these spaces is stated in \cite[Corollary 2.6]{MR942266}. We refer to \cite[Definition 3.1.1 and 3.2.1]{MR3931352} for further information on this kind of Orlicz spaces and the corresponding Luxemburg norms. In particular, from \cite[Theorem 6.6]{MR194881}, we can deduce the following H\"older-type inequality: if $f \in L^{\exp}(\T^d)$ and $g,h \in L^2\log L(\T^d)$, we have that $fgh \in L^1(\T^d)$ and 
\begin{align}\label{eq:hi}
\| fgh\|_{L^1(\T^d) } \lesssim  \|f\|_{L^{\exp} (\T^d)}\|g\|_{L^2\log L (\T^d)} \|h\|_{L^2\log L (\T^d)}. 
\end{align}

The Luxemburg norm on $L^p\log L$ can be more conveniently estimated from above as follows.

\begin{lemma}[Estimate of the Luxemburg norm]\label{lm:lux-int} For all $1\leq p<\infty$, we have 
\begin{align}
    \|f\|_{L^p\log L(\T^d)} \lesssim \max\left\{1, \left(\int_{\T^d} |f|^p\log(|f|+2) \dd x\right)^{1/p} \right\} .
\end{align}
\end{lemma}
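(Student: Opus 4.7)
The plan is to exhibit an explicit admissible value of $h$ in the defining infimum for the Luxemburg norm and check that it satisfies the required integral inequality. Write $M\coloneqq \left(\int_{\T^d} |f|^p\log(|f|+2)\dd x\right)^{1/p}$, and set
\[
h\coloneqq \max\{1, M\}.
\]
By construction $h\geq 1$, so that $|f(x)|/h\leq |f(x)|$ pointwise, and hence $\log(|f(x)|/h+2)\leq \log(|f(x)|+2)$ for every $x\in\T^d$.

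Using this monotonicity together with the identity $(|f|/h)^p=|f|^p/h^p$, I would estimate
\[
\int_{\T^d} \left(\tfrac{|f(x)|}{h}\right)^p \log\!\left(\tfrac{|f(x)|}{h} +2\right)  \dd x \;\leq\; \frac{1}{h^p}\int_{\T^d}|f(x)|^p\log(|f(x)|+2)\dd x \;=\; \frac{M^p}{h^p}\;\leq\; 1,
\]
where the final inequality uses $h\geq M$. This shows that $h=\max\{1,M\}$ lies in the admissible set of the Luxemburg infimum, so $\|f\|_{L^p\log L(\T^d)}\leq \max\{1,M\}$, which is the claimed bound (with implicit constant equal to $1$).

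There is essentially no genuine obstacle here: the only subtlety worth flagging is that both constraints $h\geq 1$ and $h\geq M$ are needed simultaneously — the first to control the logarithm via monotonicity in the argument, the second to absorb the integral itself — and this is exactly why the bound must involve the maximum of $1$ and $M^{1/p}$ rather than $M^{1/p}$ alone (the case $f\equiv 0$, for which $M=0$, illustrates why the constant $1$ cannot be dropped).
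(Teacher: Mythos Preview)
Your proof is correct and follows essentially the same approach as the paper's: both pick the candidate $h=\max\{1,M\}$ (the paper writes $\lambda$ for your $M$) and verify admissibility using $h\ge 1$ to control the logarithm and $h\ge M$ to absorb the integral. One minor slip in your closing remark: the bound involves $\max\{1,M\}$, not $\max\{1,M^{1/p}\}$, since your $M$ already carries the exponent $1/p$.
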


\begin{proof}
Let $\lambda^p=\int_{\T^d}|f|^p\log(2+|f|)\dd x$. The function $h(x)\coloneqq |x|^p\log(|x|+2)$ is monotone, hence it suffices to show that \begin{align}
\int_{\T^d}h\left(\frac{f}{\max\{1,\lambda\}}\right)\dd x\leq 1.
\end{align}
If $\lambda\leq 1$ this is clearly the case;  otherwise, it holds that \begin{align}
\int_{\T^d}h\left(\frac{f}{\lambda}\right)\dd x\leq \frac{1}{\lambda^p}\int_{\T^d}h(f)\dd x=1.
\end{align}
\end{proof}

\subsection{Besov spaces}

The Sobolev space $H^{\exp,a,s}(\T^d)$, defined by the (semi-)norm in \cref{eq:sobolev-norm}, can be characterized as a Besov space. 

We define the Besov spaces $B^{\exp,a,s}(\T^d)$ and $\mathbf{B}^{\exp,a,s}(\T^d)$ of mean-free functions, endowed with the norms 
\begin{align}
 \norm{\theta}_{{B}^{\exp,a,s}(\T^d)}&\coloneqq \left(\sum_{k \geq 1}  \exp\left(2sk^a\right)\norm{\theta_k}_{L^2(\T^d)}^2\right)^\frac{1}{2},\\
\label{eq:besov-filtered-n}\norm{\theta}_{\mathbf{B}^{\exp,a,s}(\T^d)}&\coloneqq \left(\sum_{k \geq 1} k^{a-1} \exp\left(2sk^a\right)\norm{\theta_k^\ge}_{L^2(\T^d)}\right)^\frac{1}{2},
\end{align}
respectively.

\begin{lemma}[Besov-norm equivalence]\label{lm:besov-equiv}
The following function spaces coincide for $a\in (0,1]$: 
\[
H^{\exp,a,s}(\T^d) \equiv B^{\exp,a,s \log^a 2}(\T^d) \equiv \mathbf{B}^{\exp,a,s \log^a 2}(\T^d).
\]
\end{lemma}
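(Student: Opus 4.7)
The plan is to pass to the frequency side via Plancherel and exploit the bounded-overlap property of the Littlewood--Paley partition of unity, together with a telescoping identity hidden in the weight $k^{a-1}\exp(2sk^a\log^a 2)$.

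For the first equivalence $H^{\exp,a,s} \equiv B^{\exp,a,s\log^a 2}$, Plancherel gives $\|\theta_k\|_{L^2}^2 \sim \sum_\eta |\widehat{\phi_k}(\eta)|^2 |\widehat\theta(\eta)|^2$. For $|\eta|\in [2^{k-2},2^k]$ — the support of $\widehat{\phi_k}$ — the exponents $2s\log^a|\eta|$ and $2sk^a\log^a 2$ differ by at most $2s(k^a-(k-2)^a)\log^a 2$, which is bounded uniformly in $k$ for $a\in(0,1]$ by concavity of $x\mapsto x^a$; hence the two weights are comparable up to constants depending only on $a,s$. Combining this with $\sum_k|\widehat{\phi_k}(\eta)|^2\sim 1$ for $|\eta|\ge 1$, which follows from $\sum_k \widehat{\phi_k}=1$ together with the bounded-overlap property \cref{eq:supp-fourier}, and swapping the summation order, yields the first equivalence.

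For the second equivalence $B^{\exp,a,s\log^a 2} \equiv \mathbf{B}^{\exp,a,s\log^a 2}$, I would first use the orthogonality identity $\|\theta_k^\ge\|_{L^2}^2 \sim \sum_{j\ge k}\|\theta_j\|_{L^2}^2$, which follows directly from Plancherel and the Fourier-support properties of $\phi_k$ and $\psi_{k-1}$ (in particular, $\widehat{\theta_k^\ge}$ vanishes for $|\eta|\le 2^{k-2}$). Substituting into the definition of $\|\theta\|_{\mathbf{B}^{\exp,a,s\log^a 2}}^2$ and interchanging the order of summation gives
\[
\|\theta\|_{\mathbf{B}^{\exp,a,s\log^a 2}}^2 \sim \sum_{j\ge 1}\|\theta_j\|_{L^2}^2\sum_{k=1}^{j}k^{a-1}\exp(2sk^a\log^a 2).
\]
The key observation is that $k^{a-1}\exp(2sk^a\log^a 2)$ is, up to the fixed factor $2sa\log^a 2$, the discrete derivative of $k\mapsto \exp(2sk^a\log^a 2)$, so a telescoping / integral-comparison argument yields $\sum_{k=1}^{j}k^{a-1}\exp(2sk^a\log^a 2)\sim\exp(2sj^a\log^a 2)$, which identifies the right-hand side with $\|\theta\|_{B^{\exp,a,s\log^a 2}}^2$. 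The main obstacle is to keep the implicit constants independent of $k$ and $\theta$; this is precisely what forces $a\in(0,1]$, since for $a>1$ the increment $k^a-(k-2)^a$ becomes unbounded (breaking the first step) and the partial sums in the second step are no longer dominated by their last term.
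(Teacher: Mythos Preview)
Your proposal is correct and follows essentially the same route as the paper. Both arguments establish the first equivalence via Plancherel, the bounded-overlap property of the $\widehat{\phi_k}$, and a comparison of the weights $\exp(2s\log^a|\eta|)$ and $\exp(2s k^a\log^a 2)$ on the dyadic annuli (the paper splits this into monotonicity in one direction and subadditivity of $x\mapsto x^a$ in the other, while you do both directions at once via the concavity bound on $k^a-(k-2)^a$); and both establish the second equivalence via the almost-orthogonality identity $\|\theta_k^{\ge}\|_{L^2}^2\sim\sum_{j\ge k}\|\theta_j\|_{L^2}^2$, a swap of summation, and the integral/derivative comparison $\sum_{k\le j}k^{a-1}\exp(2\tilde s k^a)\sim\exp(2\tilde s j^a)$.
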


\begin{proof} We set $\tilde s = s\log^a 2$. From the (almost-)orthogonality of the Littlewood--Paley decomposition and the definition of the Sobolev spaces, we deduce that
\[
\|\theta\|_{H^{\exp,a,s}(\T^d)} \lesssim \left(\sum_{k\ge 1} \|\theta_k\|^2_{H^{\exp,a,s}(\T^d)}\right)^{1/2} = \left(\sum_{k\ge 1} \sum_{\eta\not=0} \exp\left(2s\log^a |\eta|\right) |\hat \theta_k(\eta)|^2\right)^{1/2}.
\]
By our choice of the projection $\phi_k$, the Fourier support of $\theta_k$ is restricted to wave numbers $\eta\in2\pi\Z^d$ with $|\eta|\in \left(2^{k-2},2^k\right)$, so that $\exp\left(2s\log^a |\eta|\right) \le \exp\left(2 \tilde sk^a\right)$ in each summand. The embedding $B^{\exp,a,\tilde s}(\T^d) \subset H^{\exp,a,s }(\T^d)$ follows.

In order to prove the reverse inclusion, we similarly observe that
\[
(k\log 2)^a \le \left(\log(4) +\log|\eta|\right)^a \le \log^a(4) + \log^a |\eta|
\]
in the wave number  support of $\theta_k$, and the last inequality holds true because $a\in(0,1]$. It thus follows from the definition of the Besov norms and Plancherel's equality that
\[
\|\theta\|_{B^{\exp,a,\tilde s}(\T^d)}^2  = \sum_{k\ge 1} \exp\left(2\tilde s k^a\right) \sum_{\eta\not=0}|\hat \theta_k(\eta)|^2  \lesssim  \sum_{\eta\not=0} \exp\left(2s\log^a|\eta|\right) \sum_{k\ge 1} |\hat \theta_k(\eta)|^2.
\]
It remains to notice that for any fixed $\eta$, there exists a unique $k\in\N$ for which
\[
\sum_{k\ge 1}  |\hat \theta_k(\eta)|^2 =|\hat \theta(\eta)|^2\sum_{k\ge 1}  |\hat \phi_k(\eta)|^2 = |\hat \theta(\eta)|^2 \left(|\hat \phi_{k}(\eta)|^2 + |\hat \phi_{k+1}(\eta)|^2+|\hat \phi_{k+2}(\eta)|^2\right).
\]
Moreover,  Hausdorff--Young's inequality and the scaling property of the projectors guarantee that
\[
\|\hat \phi_k\|_{L^{\infty}(\R^d)} \le \|\phi_k\|_{L^1(\R^d)} = \|\phi_1\|_{L^1(\R^d)} \lesssim 1.
\]
A combination of the previous estimates gives $H^{\exp,a,s}(\T^d)\subset B^{\exp,a,\tilde s}(\T^d)$.

The equality of the two Besov spaces follows from the facts that \begin{align}
\sum_{k\geq j+1}\norm{\theta_k}_{L^2(\T^d)}^2\lesssim \norm{\theta_k^\ge}_{L^2(\T^d)}^2\lesssim \sum_{k\geq j-1}\norm{\theta_k}_{L^2(\T^d)}^2
\end{align}
by the almost orthogonality and because \begin{align}
    \sum_{j=1}^kj^{a-1}\exp(2\tilde{s} j^a)\approx \exp(2\tilde{s} k^a)
\end{align}
as one sees from $\frac{\mathrm{d}}{\mathrm{d}k} \exp(2\tilde{s} k^a)\approx k^{a-1}\exp(2\tilde{s} k^a)$ and comparing with the integral.
\end{proof}

We will need two interpolation-type estimates, variants of which were derived earlier in \cite{2203.10860}.

\begin{lemma}\label{lm:1}
For any $b\le -a/2$, it holds that
\[
 \Big\|{\sup_{k \ge 1} k^{b}\exp\left(c k^a\right) |\theta_k^\geq|}\Big\|_{L^2\log L(\T^d) }  \lesssim   \|\theta\|_{B^{\exp,a,c}(\T^d)} +\norm{\theta}_{L^\infty(\T^d)}.
 \]
\end{lemma}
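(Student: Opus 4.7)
The plan is to reduce the $L^2\log L$-estimate, via \cref{lm:lux-int}, to a single integral inequality, and then to exploit the uniform $L^\infty$-boundedness of the high-frequency pieces $\theta_k^\ge$ together with the Besov-norm equivalence of \cref{lm:besov-equiv}. By the $1$-homogeneity of $F := \sup_{k\ge 1} k^b\exp(ck^a)|\theta_k^\ge|$ and of the two terms on the right-hand side, I may normalize so that $\|\theta\|_{B^{\exp,a,c}(\T^d)} + \|\theta\|_{L^\infty(\T^d)} \le 1$, and by \cref{lm:lux-int} the claim reduces to showing
\[
\int_{\T^d} F^2 \log(F+2) \,\d x \lesssim 1.
\]

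The pointwise input is the uniform bound $|\theta_k^\ge| \lesssim \|\theta\|_{L^\infty(\T^d)}$, which follows from writing $\theta_k^\ge = \theta - \theta \ast \psi_{k-1}$, applying Young's inequality, and using that $\|\psi_{k-1}\|_{L^1(\R^d)} = \|\psi_0\|_{L^1(\R^d)}$ is independent of $k$. Hence $|\theta_k^\ge|^2 \lesssim \|\theta\|_{L^\infty}|\theta_k^\ge|$, and combining with the crude inequality $(\sup_k\alpha_k)^2 \le \sum_k\alpha_k^2$ yields
\[
F^2 \le \sum_{k\ge 1} k^{2b}\exp(2ck^a)|\theta_k^\ge|^2 \lesssim \|\theta\|_{L^\infty}\cdot G, \qquad G := \sum_{k\ge 1}k^{2b}\exp(2ck^a)|\theta_k^\ge|.
\]
Since $F \lesssim \sqrt G$ (using $\|\theta\|_{L^\infty}\le 1$), one has $\log(F+2) \lesssim 1 + \log(G+1)$, and hence
\[
\int F^2\log(F+2)\,\d x \lesssim \int G\,\d x + \int G\log(G+1)\,\d x.
\]

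The condition $b \le -a/2$ gives $k^{2b} \le k^{-a} \le k^{a-1}$ (in the relevant range), so that together with $\|\theta_k^\ge\|_{L^1} \le \|\theta_k^\ge\|_{L^2}$ and the norm equivalence from \cref{lm:besov-equiv} one obtains $\int G\,\d x \lesssim \|\theta\|_{\mathbf{B}^{\exp,a,c}}^2 \approx \|\theta\|_{B^{\exp,a,c}}^2 \le 1$. The main obstacle will be the second, $L\log L$-type estimate on $G$: I plan to handle it by splitting the Littlewood--Paley sum defining $G$ at a pointwise threshold $k_\ast(x)$ at which the uniform bound $|\theta_k^\ge| \lesssim 1$ and the $L^2$-decay $\|\theta_k^\ge\|_{L^2} \lesssim k^{(1-a)/2}\exp(-c(k-1)^a)\|\theta\|_{B^{\exp,a,c}}$ balance, and then combining a layer-cake representation with Chebyshev's inequality, in the spirit of the analogous estimate in \cite{2203.10860}. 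The exponent $b = -a/2$ is precisely the borderline at which a naive $L^2$-estimate on the supremum just fails, and the logarithmic room afforded by \cref{lm:lux-int} is what makes the overall bound go through.
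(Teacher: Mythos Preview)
Your reduction to the quantity $G = \sum_{k\ge 1} k^{2b}\exp(2ck^a)\,|\theta_k^\ge|$ is too lossy, and the claimed bound $\int G\,\d x \lesssim \|\theta\|_{\mathbf B^{\exp,a,c}}^2$ is false. The step ``$\|\theta_k^\ge\|_{L^1}\le \|\theta_k^\ge\|_{L^2}$ and the norm equivalence from \cref{lm:besov-equiv}'' does not close: the $\mathbf B$-norm squared carries $\|\theta_k^\ge\|_{L^2}^{2}$, not $\|\theta_k^\ge\|_{L^2}$, and the pointwise bound $|\theta_k^\ge|\lesssim 1$ only gives $\|\theta_k^\ge\|_{L^1}\gtrsim \|\theta_k^\ge\|_{L^2}^2$, which is the wrong direction. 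Concretely, take $\theta$ supported in a single dyadic shell at level $k_0$ with $\|\theta\|_{L^\infty}\sim\|\theta\|_{L^2}\sim\|\theta\|_{L^1}\sim \exp(-ck_0^a)$, so that $\|\theta\|_{B^{\exp,a,c}}+\|\theta\|_{L^\infty}\lesssim 1$; then $\theta_k^\ge\approx\theta$ for $k\le k_0$ and
\[
\int_{\T^d} G\,\d x \;\approx\; \Big(\sum_{k\le k_0} k^{2b}\exp(2ck^a)\Big)\|\theta\|_{L^1}\;\approx\; k_0^{2b+1-a}\exp(ck_0^a)\ \longrightarrow\ \infty
\]
as $k_0\to\infty$. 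So already the ``easy'' term $\int G$ is unbounded on the unit ball, and the harder term $\int G\log(G+1)$ is worse; the proposed layer-cake/Chebyshev plan cannot repair this, because the deficit is exponential in $k_0^a$.

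The fix used in the paper is to \emph{not} pass from $|\theta_k^\ge|^2$ to $|\theta_k^\ge|$. One keeps the square and instead spends the $L^\infty$-bound only inside the logarithm: since both suprema are attained at the same index and $|\theta_k^\ge|\lesssim 1$,
\[
F^2\log(F+2)\ \le\ \sup_{k\ge 1}\, n(k)^2|\theta_k^\ge|^2\,\log\big(n(k)|\theta_k^\ge|+2\big)\ \lesssim\ \sup_{k\ge 1}\, n(k)^2\log n(k)\,|\theta_k^\ge|^2,
\]
with $n(k)=k^b\exp(ck^a)$. Writing $n(k)^2\log n(k)\lesssim 1+\sum_{l\le k}\frac{\d}{\d l}\big(n(l)^2\log n(l)\big)$ and swapping the order to $\sum_l(\cdots)\sup_{k\ge l}|\theta_k^\ge|^2$, one then uses the maximal-type bound $\int\sup_{k\ge l}|\theta_k^\ge|^2\,\d x\lesssim \|\theta_{l-2}^\ge\|_{L^2}^2$ (from \cite{2203.10860}). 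The condition $b\le -a/2$ is exactly what ensures $\frac{\d}{\d l}\big(n(l)^2\log n(l)\big)\lesssim l^{a-1}\exp(2cl^a)$, so the sum reproduces $\|\theta\|_{\mathbf B^{\exp,a,c}}^2$. Your proposal does correctly identify the normalization, the use of \cref{lm:lux-int}, and the uniform bound on $\theta_k^\ge$; the missing idea is to absorb the logarithm into the weight $n(k)$ rather than into a first-power sum in $|\theta_k^\ge|$.
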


\begin{proof}
We may assume \begin{equation}
    \norm{\theta}_{L^\infty}= 1\label{5}
\end{equation} which is not restrictive as both sides are homogenous.
We introduce  $n(k) \coloneqq k^b \exp\left(ck^a\right)$ for notational convenience. 

Applying the estimate of the Luxemburg norm in \cref{lm:lux-int} and using that both occurring suprema are optimized at the same index,  we get 
\begin{align*}
 \Big\|{\sup_{k \ge 1} n(k) |\theta_k^\geq|}\Big\|_{L^2\log L(\T^d) }^2 &\lesssim    \int_{\T^d} \left(\sup_{k\ge 1} n(k)^2  |\theta_k^\geq |^2\right)\log\left(\sup_{k\ge 1} n(k) |\theta_k^\geq |+2\right) \dd x +1\\
    &=    \int_{\T^d} \sup_{k\ge 1}\left(   n(k)^2  |\theta_k^\geq |^2\log\left(n(k)  |\theta_k^\geq |+2\right) \right)\dd x+1.
\end{align*}
Since $n(k)> 1$ and 
\[
\|\theta_k^{\ge}\|_{L^{\infty}(\T^d)} \le \left(1+\|\psi_{k-1}\|_{L^1(\R^d)}\right) \|\theta\|_{L^{\infty}(\T^d)}  =\left(1+\|\psi_{1}\|_{L^1(\R^d)}\right) \|\theta\|_{L^{\infty}(\T^d)} \lesssim 1,
\]
as a consequence of Young's convolution inequality and assumption \cref{5}, we  have 
\[\log\left(n(k)|\theta_k^{\ge}| +2\right)\le \log n(k) + \log (|\theta_k^{\ge}| + 2)\lesssim \log n(k). \] 
Therefore  the right-hand side is controlled by
\begin{equation}
    \label{2}
  \Big\|{\sup_{k \ge 1} n(k) |\theta_k^\geq|}\Big\|_{L^2\log L(\T^d) }^2 
\lesssim   \int_{\T^d} \sup_{k\ge 1} n(k)^2 \log(n(k))  |\theta_k^\geq |^2  \dd x   +1 .
\end{equation}
To further bound the right-hand side,  we compute 
\begin{align*}
    \int_{\T^d} \sup_{k\ge 1}  n(k)^2 \log (n(k)) |\theta_k^\geq  |^2 \dd x  
    &\lesssim \int_{\T^d}\sup_{k\ge 1}  \left(1+  \sum_{l=1}^k\frac{\mathrm{d}}{\mathrm{d}l} \left(n(l)^2 \log n(l)\right)   \right)|\theta_k^\geq |^2 \dd x \\
    & \lesssim  \left(1+ \sum_{l\ge 1} \frac{\mathrm{d}}{\mathrm{d}l} \left(n(l)^2 \log n(l)\right) \right) \int_{\T^d}   \sup_{k\ge l}   |\theta_k^\geq |^2 \dd x .
\end{align*}
As established in the proof of   \cite[Lemma 2]{2203.10860}, the integral term can be bounded by $\|\theta_{l-2}^{\ge}\|_{L^2(\T^d)}^2$, and thus, we find that
\[
    \int_{\T^d} \sup_{k\ge 1}  n(k)^2 \log (n(k))|\theta_k^\geq  |^2 \dd x  
\lesssim \|\theta\|_{\mathbf{B}^{\exp,a,c}(\T^d)}^2,
\]
provided that
\[
 \frac{\d}{\d k} \left(n(k)^2 \log n(k)\right) \lesssim \frac{\mathrm{d}}{\mathrm{d}k}(\exp(c(k-2)^a)).
\]
The latter holds true as long as $b\le -a/2$.
In total, we have derived the desired estimate.
\end{proof}

\begin{lemma}\label{lm:interp1}
Let $a\in(0,1)$,  $b\le -a$ and $c\in\R_+$. Let $\chi_k = 2^{kd}\chi(2^k)$ be a family of mean-free Schwartz functions for which $\widehat \chi_1$ is compactly supported.  Then the following inequality holds for any  $\theta \in L^\infty(\T^d) \cap B^{\exp,a, c}(\T^d)$:
\begin{align}
    \Bigg\| \Bigg(\sum_{k \ge 1} k^b \exp(2ck^a) |\theta \ast \chi_k|^2 \Bigg)^{1/2} \Bigg\|_{L^2\log L(\T^d)} \lesssim  \|\theta\|_{{B}^{\exp,a,c}(\T^d)} +\norm{\theta}_{L^\infty(\T^d)} .
 \end{align}
\end{lemma}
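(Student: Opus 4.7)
I plan to imitate the proof strategy of Lemma~\ref{lm:1}, replacing the pointwise supremum with the $\ell^2$-valued square function. Set $n(k)^2 := k^b \exp(2ck^a)$, $f_k := \theta * \chi_k$, and $F := (\sum_{k \ge 1} n(k)^2 |f_k|^2)^{1/2}$. Since both sides of the inequality are $1$-homogeneous in $\theta$, I normalize to $\|\theta\|_{L^\infty} = 1$; Young's convolution inequality then yields $\|f_k\|_{L^\infty} \le \|\chi_1\|_{L^1} \lesssim 1$ uniformly in $k$. By Lemma~\ref{lm:lux-int}, it suffices to show
\[
\int_{\T^d} F^2 \log(F+2)\, \mathrm{d}x \lesssim \|\theta\|_{B^{\exp,a,c}(\T^d)}^2 + 1.
\]

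First I would control the logarithm pointwise: using $|f_k| \lesssim 1$ and the asymptotics $\log n(k) \sim ck^a$, one has $\log(F + 2) \lesssim 1 + (k^*)^a$ at an appropriate argmax $k^* = k^*(x)$. Absorbing the factor $(k^*)^a$ into the sum defining $F^2$ and telescoping the resulting weight via the identity
\[
n(k)^2 k^a \lesssim 1 + \sum_{l = 1}^k \frac{\mathrm{d}}{\mathrm{d}l}\bigl(n(l)^2 l^a\bigr),
\]
just as in the proof of Lemma~\ref{lm:1}, one swaps the order of summation to reduce the problem to estimating the integrals
\[
\int_{\T^d} \sup_{k \ge l} |f_k|^2\, \mathrm{d}x .
\]
These are controlled by $\|\theta_{l-C}^\ge\|_{L^2(\T^d)}^2$ via the maximal-type argument of \cite[Lemma~2]{2203.10860}: the Fourier-support and mean-free properties of $\chi_k$ put $f_k$ in essentially the same structural position as the high-pass block $\theta_k^\ge$ of Lemma~\ref{lm:1}.

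The threshold $b \le -a$ then emerges from the concluding derivative comparison. Writing $n(l)^2 l^a = l^{b + a} \exp(2cl^a)$, the leading-order contribution is
\[
\frac{\mathrm{d}}{\mathrm{d}l}\bigl(n(l)^2 l^a\bigr) \sim 2ca \, l^{b + 2a - 1} \exp(2cl^a),
\]
which is dominated by $\frac{\mathrm{d}}{\mathrm{d}l}\exp\bigl(2c(l-C)^a\bigr) \sim 2ca\, l^{a - 1} \exp(2cl^a)$ precisely when $b + 2a - 1 \le a - 1$, i.e.\ $b \le -a$. Summing against $\|\theta_{l-C}^\ge\|_{L^2}^2$ and invoking the Besov-norm equivalence of Lemma~\ref{lm:besov-equiv} recovers $\|\theta\|_{B^{\exp,a,c}}^2$ up to a constant. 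The $L^2$-bound $\|F\|_{L^2}^2 \lesssim \|\theta\|_{B^{\exp,a,c}}^2$, needed in parallel, follows from a direct Fourier-side computation using the pointwise estimate $|\widehat{\chi_k}(\eta)| \lesssim \min(1, |\eta|/2^k)\mathds{1}_{|\eta| \le R\, 2^k}$ provided by the mean-free and compactly Fourier-supported properties of $\chi$.

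The main technical hurdle is the very first step --- the clean absorption of $(k^*)^a$ into the telescoping weight without introducing spurious polynomial losses that would weaken the threshold. The factor-of-two discrepancy with the threshold $b \le -a/2$ of Lemma~\ref{lm:1} precisely reflects the squaring that occurs when one passes from an $\ell^\infty$-sup to an $\ell^2$-sum.
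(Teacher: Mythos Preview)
Your outline has the right scaffolding, but both of the crucial steps are where you wave your hands, and neither goes through as stated.

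First, the ``argmax'' step is the heart of the matter and you do not actually carry it out. There is no pointwise bound $\log(F+2)\lesssim 1+(k^*)^a$ for any reasonable choice of $k^*$: since the weights $n(k)^2=k^b\exp(2ck^a)$ grow without bound and you only know $|f_k|\lesssim 1$ pointwise, the sum $F^2=\sum_k n(k)^2|f_k|^2$ is not controlled by any single summand. The paper handles exactly this point via a separate lemma (Lemma~\ref{lm:j}): for $\rho(s)=s\log(s+2)$ and $\alpha(k)=k^b\exp(2ck^a)$ one has
\[
\rho\Bigl(\sum_{k\ge1} \alpha(k)|x_k|^2\Bigr)\lesssim \sum_{k\ge1} \rho(\alpha(k))\,|x_k|^2\quad\text{whenever }\sup_k|x_k|\le 1,
\]
proved by induction on the number of terms, using convexity of $\rho$ together with the key observation $\log\bigl(\sum_{k\le K}\alpha(k)+2\bigr)\lesssim \log(\alpha(K)+2)$. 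This is precisely the ``clean absorption'' you flag as the main hurdle; it genuinely requires an argument and is not a one-liner.

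Second, your claim that $\int_{\T^d}\sup_{k\ge l}|f_k|^2\,\d x\lesssim \|\theta_{l-C}^\ge\|_{L^2}^2$ via the maximal argument of \cite[Lemma~2]{2203.10860} is false. That argument relies on $\theta_k^\ge$ being a \emph{high-pass} filter (Fourier support in $|\eta|\gtrsim 2^k$), whereas $f_k=\theta*\chi_k$ has Fourier support in $|\eta|\le R\,2^k$ --- it is low-pass with a zero at the origin. For $\theta=e^{i\eta_0\cdot x}$ one has $\theta_{l-C}^\ge=0$ once $l>\log_2|\eta_0|+C$, yet $\sup_{k\ge l}|f_k|=\sup_{k\ge l}|\widehat\chi_1(\eta_0/2^k)|\sim |\eta_0|/2^l>0$. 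Note also that after telescoping a \emph{sum} you should land on $\sum_{k\ge l}|f_k|^2$, not $\sup_{k\ge l}|f_k|^2$; the sup arises in Lemma~\ref{lm:1} only because that lemma starts from a sup. The paper's route after Lemma~\ref{lm:j} is in fact more direct than any telescoping: one bounds $\sum_k \rho(\alpha(k))\|\theta*\chi_k\|_{L^2}^2$ by Plancherel, decomposes $\theta=\sum_j\theta_j$, uses $|\widehat\chi_k(\eta)|\lesssim 2^{j-k}$ for $\eta$ in the $j$-th dyadic annulus (from mean-freeness plus compact Fourier support of $\widehat\chi_1$), and sums the resulting near-geometric series in $\ell=k-j$; the constraint $b\le -a$ appears as the requirement $j^{a+b}\lesssim 1$.
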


To prove \cref{lm:interp1}, we need the following preliminary result. 

\begin{lemma}\label{lm:j}
Let us define
\[
\alpha(k) \coloneqq k^b\exp\left(2ck^a\right),
\]
 for $k\in\N$, $a\in(0,1]$, $b\in\R$, and $c\in\R_+$. Let  $\rho:\R_{+}\to\R_{+}$ be defined by $\rho(s) \coloneqq  s\log(s+2)$. Then
\begin{align*}\label{interpolation}
    \rho\left(\sum_{k\ge 1} \alpha(k)|x_k|^2 \right) \lesssim \sum_{k\ge 1} \rho\left(\alpha(k) \right) |x_k|^2,
\end{align*}
for any sequence $\{x_k\}_{k\in\N}$ such that $\sup_k |x_k|\le 1$.
\end{lemma}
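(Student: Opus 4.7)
The plan is to reduce the claim to showing $\log(S+2) \lesssim S^{-1}\sum_k \alpha(k)|x_k|^2 \log(\alpha(k)+2)$, where $S \coloneqq \sum_k \alpha(k)|x_k|^2$; multiplying by $S$ recovers the target bound since $\rho(\alpha(k))|x_k|^2 = \alpha(k)|x_k|^2 \log(\alpha(k)+2)$. Interpreting the right-hand side as a weighted average of the quantities $\log(\alpha(k)+2)$ with weights $\alpha(k)|x_k|^2/S$, what we really need is a reverse-Jensen-type estimate, which cannot hold without exploiting the specific growth of $\alpha$.

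A short preliminary case covers the bounded regime: if $S \le S_0$ for some constant $S_0 = S_0(a,b,c)$ to be fixed later, the estimate $\rho(S) \le \rho(S_0)$ combined with $\sum_k \rho(\alpha(k))|x_k|^2 \ge \log 2\cdot S$ (and the refinement $\rho(S) \lesssim S$ for $S\le 2$) yields the claim with a harmless constant.

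For $S \ge S_0$ the core step will be a threshold decomposition. Set $T\coloneqq \sqrt{S+2}$ and $J\coloneqq\{k:\alpha(k)\ge T\}$. On $J$ one has $\log(\alpha(k)+2)\ge \tfrac12\log(S+2)$, whence
\[
 \sum_k \rho(\alpha(k))|x_k|^2 \ge \tfrac12\log(S+2)\,\sum_{k\in J}\alpha(k)|x_k|^2.
\]
It thus suffices to show that the low-$\alpha$ mass $A\coloneqq\sum_{k\notin J}\alpha(k)|x_k|^2$ is at most $S/2$, for then $\sum_{k\in J}\alpha(k)|x_k|^2 \ge S/2$ and one concludes $\rho(S)\le 4\sum_k \rho(\alpha(k))|x_k|^2$. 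Using $|x_k|^2\le 1$ and $\alpha(k)<T$ off $J$ gives the crude bound $A \le T\cdot\#J^c$.

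The main obstacle will be the estimate on $\#J^c$, which is precisely where the explicit form of $\alpha$ enters. Since $\log\alpha(k) = b\log k + 2ck^a \gtrsim k^a$ for $k$ large, the inequality $\alpha(k)<T$ forces $k \lesssim (\log T)^{1/a}$; after absorbing the finitely many small (and possibly non-monotone) indices into a constant, $\#J^c \lesssim (\log S)^{1/a}$. Hence $A \lesssim \sqrt{S+2}\,(\log S)^{1/a}$, which is $\le S/2$ as soon as $S \ge S_0$ for a sufficiently large $S_0 = S_0(a,b,c)$---compatible with the bounded-regime treatment. This counting step genuinely relies on the exponential factor in $\alpha$: for $\alpha\equiv 1$ the inequality fails, confirming that the sharpness of the growth enters precisely here.
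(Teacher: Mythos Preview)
Your proof is correct but follows a genuinely different route from the paper. The paper argues by induction on the number of summands: using convexity of $\rho$, the inductive step reduces to the inequality
\[
\log\Bigl(\sum_{k=1}^{K}\alpha(k)+2\Bigr)\ \lesssim\ \log(\alpha(K)+2),
\]
which follows from the partial-sum bound $\sum_{k\le K}\alpha(k)\lesssim K^{1-a}\alpha(K)$ (obtained by comparing the sum with the integral of $\frac{\d}{\d k}(k^{1-a}\alpha(k))$). Your argument avoids induction altogether: after disposing of small $S$, you split at the threshold $T=\sqrt{S+2}$ and exploit $|x_k|\le1$ to control the sub-threshold mass by $T\cdot\#\{k:\alpha(k)<T\}$, then count the sub-level set via $\log\alpha(k)\gtrsim k^a$. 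Both proofs hinge on the same fact---the super-polynomial growth of $\alpha$---but encode it differently: the paper through a tail-sum estimate $\sum_{k\le K}\alpha(k)\lesssim\alpha(K)\cdot\text{(polynomial in }K)$, you through a sub-level count $\#\{\alpha<T\}\lesssim(\log T)^{1/a}$. Your approach is arguably more direct and makes the role of the growth condition more transparent (as you note, for $\alpha\equiv1$ the counting collapses); the paper's approach, once the tail-sum estimate is in hand, gives a slightly cleaner constant-tracking through the telescoping induction.
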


\begin{proof}It is enough to show the statement for partial sums over the range $k\in\{0,\dots,K\}$. We do that via induction over $K$. 

The base case, $K=1$, amounts to 
\begin{align*}
    \rho(\alpha(1)|x_1|^2) \lesssim \rho(\alpha(1))|x_1|^2,
\end{align*}
which follows from the bound on $x_1$.

For the inductive step, we assume  that
\begin{align*}
    \rho\left(\sum_{k= 1}^{K-1} \alpha(k)|x_k|^2 \right) \lesssim \sum_{k= 1}^{K-1} \rho\left(\alpha(k) \right) |x_k|^2,
\end{align*}
holds, uniformly in $K$, and we want to show that 
\begin{align*}
  \rho\left(\sum_{k=1}^{K-1} \alpha(k)|x_k|^2 + \alpha(K)|x_K|^2 \right) - \rho\left(\sum_{k=1}^{K-1} \alpha(k)|x_k|^2\right)    \lesssim |x_K|^{2} \rho(\alpha(K)),
\end{align*}
holds, again, uniformly in $K$.
Using the convexity of $\rho$, we notice that the latter would follow from the bound
\begin{align*}
    \rho'\left(\sum_{k=1}^{K} \alpha(k)  |x_k|^2\right) \alpha(K)   \lesssim \rho(\alpha(K)).
\end{align*}
By the definition of $\rho$ and the boundedness of $x_k$, it is enough to show that
\begin{equation}
\label{4}
\log\left(\sum_{k=1}^{K} \alpha(k)  +2 \right)\lesssim \log(\alpha(K)+2).
\end{equation}

To prove \cref{4}, we observe that $\alpha(k) \lesssim \frac{\dd}{\dd k} \left(k^{1-a} \alpha(k)\right)$, and thus
\[
\sum_{k=1}^K \alpha(k)\lesssim \int_1^K \alpha(k)\dd k \lesssim K^{1-a} \alpha(K).
\]
Taking the logarithm then yields
\[
\log \left(\sum_{k=1}^K \alpha(k)\right) \lesssim \log \left( K^{1-a} \alpha(K)\right) \lesssim \log(K) + \log(\alpha(K)) \lesssim \log(\alpha(K)),
\]
which is \cref{4} to leading order.
\end{proof}

\begin{proof}[Proof of \cref{lm:interp1}]
We may again assume that $\norm{\theta}_{L^\infty}\ll 1$ by homogenity so that $\norm{\theta*\chi_k}_{L^\infty}<1$ for all $k$. 

As a first step, we use \cref{lm:lux-int} and the definition of $\rho$, followed by \cref{lm:j}, to deduce 
\begin{align*}
    \Bigg\| \Bigg(\sum_{k \ge 1} \alpha(k) |\theta \ast \chi_k|^2 \Bigg)^{1/2} \Bigg\|_{L^2\log L(\T^d)}^2 &\lesssim \int_{\T^d}  \rho\Bigg(\sum_{k \ge 1} \alpha(k) |\theta \ast \chi_k|^2 \Bigg) \dd x+1\\
    &\lesssim \sum_{k \ge 1}     \rho(\alpha(k)) \|\theta \ast \chi_k\|_{L^2(\T^d)}^2  +1.
\end{align*}
We estimate the right-hand side above as follows: we split it into phase blocks, using the almost  orthogonality of the Littlewood--Paley decomposition
\[
 \sum_{k \ge 1}     \rho(\alpha(k)) \|\theta \ast \chi_k\|_{L^2(\T^d)}^2  \lesssim \sum_{j\ge 1}   \sum_{k \ge 1}     \rho(\alpha(k)) \|\theta_j \ast \chi_k\|_{L^2(\T^d)}^2   ,
\]
and use Plancherel's theorem to compute 
\begin{align*}
\|\theta_j\ast \chi_k\|_{L^2(\T^d)}^2 = \sum_{\eta\in\Z^d} |\hat \theta_j(\eta)|^2 |\hat \chi_k(\eta)|^2 \le \sum_{\eta\in\Z^d} |\hat \theta_j(\eta)|^2 \sup_{\eta\in B_{2^j}(0)\setminus B_{2^{j-2}}(0)}|\hat \chi_k(\eta)|^2   .
\end{align*}
Since $\widehat \chi_1$ is compactly supported, we can find $R>0$ such that $\supp \widehat{\chi_1} \subset B_R(0)$. The supremum above is thus  non-zero only if $2^kR\geq 2^{j-2}$; then, recalling the fact that $\chi_k$ is mean-free, we deduce  $\widehat{\chi_k}(0)=0$ and the bound 
\begin{align}
|\widehat{\chi_k}(\eta)| \le 2^j |\grad \widehat{\chi_k}(\eta)| = 2^{j-k} |\grad \widehat{\chi_1}(2^{-k}\eta)| \le 2^{j-k}\|\grad\widehat{\chi_1}\|_{L^{\infty}(\T^d)} \lesssim 2^{j-k},
\end{align}
for any wave number  $\eta \in B_{2^j}(0)\setminus B_{2^{j-2}}(0)$.  Using Plancherel's theorem once more, it thus follows with $k=\ell+j$ that 
\[
 \sum_{k \ge 1}     \rho(\alpha(k)) \|\theta \ast \chi_k\|_{L^2(\T^d)}^2  \lesssim \sum_{j\ge 1}   \sum_{\ell\ge \lfloor -2- \log_2 R\rfloor} \rho(\alpha(\ell+j)) 2^{-\ell} \|\theta_j\|_{L^2(\T^d)}^2  .
\]
We evaluate the sum over $\ell$. By the definitions of $\rho$ and $\alpha(k)$ and by concavity ($a<1$),  we have that
 \begin{align*}
&\sum_{\ell\ge \lfloor -2- \log_2 R\rfloor} \rho(\alpha(\ell+j)) 2^{-\ell}  \\ 
& \le \sum_{\ell\ge \lfloor -2- \log_2 R\rfloor} \exp\left(2c(\ell+j)^a\right) (\ell+j)^{a+b} 2^{-\ell} \\
 &\lesssim \sum_{\ell\ge \lfloor -2- \log_2 R\rfloor}\exp\left(2cj^a+ 2caj^{a-1}\ell \right)2^{-\ell}j^{a+b}\left(1+\frac{\ell}{j}\right)^{a+b}\\
&\lesssim \exp(2cj^a)j^{a+b}\sum_{\ell\ge \lfloor -2- \log_2 R\rfloor}\exp(2caj^{a-1}\ell)2^{-\ell}\left(1+\frac{\ell}{j}\right)^{a+b}.
\end{align*}
Now, if $j$ is large enough, say $2 ca j^{a-1} \le \log 2$, we find a converging geometric series,
\begin{align*}
&\sum_{\ell\ge \lfloor -2- \log_2 R\rfloor}\exp(2caj^{a-1}\ell)2^{-\ell}\left(1+\frac{\ell}{j}\right)^a  = \sum_{\ell\ge \lfloor -2- \log_2 R\rfloor}\exp((2caj^{a-1}-\log2)\ell) \left(1+\frac{\ell}{j}\right)^a ,
\end{align*} 
so that
\[
\sum_{\ell\ge \lfloor -2- \log_2 R\rfloor} \rho(\alpha(\ell+j)) 2^{-\ell} \lesssim \exp(2cj^a)j^{a+b}.
\]
For smaller $j$ the same inequality holds, because an estimate of the form
\begin{align*}
\sum_{\ell\ge \lfloor -2- \log_2 R\rfloor} \exp(2c(\ell+j)^a) (\ell+j)^{a+b} 2^{-\ell}&= \sum_{\ell\ge \lfloor -2- \log_2 R\rfloor} \exp(2c(\ell+j)^a-(\log 2) \ell) (\ell+j)^{a+b}   \lesssim 1,
\end{align*}
with an implicit constant  dependent on $j$ is always true if $a<1$. The statement now follows from a combination of the previous estimates.
\end{proof}

\subsection{Calderon spaces}

We will make use of suitable \emph{Calderon norms} that give a more ``pointwise'' description of our regularity classes. Let $a\in [0,1]$, $b\in \R$ and $s>0$ be given. We say that a function  $\theta\in L^2(\T^d)$ belongs to  $ X_{a,s, b}(\T^d)$ if there is a function $g$ such that \begin{align}
\label{def g} &|\theta(x)-\theta(y)|\leq  \log_2^{-b} \left(1+\frac{1}{|x-y|} \right) \exp\left(-s \log_2^a\frac{1}{|x-y|}\right)(g(x)+g(y)),\\ 
&\text{for a.e.} \ x,\,y\in\T^d. \notag
\end{align}
 In this case, we set \begin{align}
\norm{\theta}_{X_{a,s,b}(\T^d)}\coloneqq \inf\{\norm{g}_{L^2(\T^d)}:\: \text{\cref{def g} holds}\}
\end{align}

Let us establish embedding estimates between the spaces  $X_{a,s,b}$ and the Besov spaces $B^{\exp,a,s}(\T^d)$.

\begin{lemma}[Embedding] \label{pointwise desc}
Let $b>1$, $a \in (0,1]$, and $s >0$. Then for all $\theta$ we have 
\begin{align}
\norm{\theta}_{X_{a,s,-b}(\T^d)}\lesssim \norm{\theta}_{B^{\exp,a,s}(\T^d)}\lesssim \norm{\theta}_{X_{a,s,b/2}(\T^d)}.
\end{align}
\end{lemma}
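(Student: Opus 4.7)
My plan is to prove the two inclusions separately; both rely on the Littlewood--Paley tools from \cref{ssec:lp} and on Cauchy--Schwarz, tuned so that the polynomial factor $k^{-b}$ (respectively $k_0^{(1-a)/2}$) emerges from the non-exponential part of the weight and is of the correct size precisely when $b>1$.

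For the right-hand inclusion $\|\theta\|_{B^{\exp,a,s}(\T^d)} \lesssim \|\theta\|_{X_{a,s,b/2}(\T^d)}$, I would use that $\widehat{\phi_k}(0)=0$ for $k\ge 1$ (see \cref{eq:supp-fourier}) to write
\[
\theta_k(x) = \int_{\R^d} \phi_k(x-y)\,(\theta(y)-\theta(x))\,\mathrm{d}y.
\]
Cauchy--Schwarz in $y$ followed by the pointwise inequality defining $X_{a,s,b/2}$ gives $\|\theta_k\|_{L^2(\T^d)}^2 \lesssim \int g(x)^2\, W_k(x)\,\mathrm{d}x$, where
\[
W_k(x) \coloneqq \int|\phi_k(x-y)|\log_2^{-b}\!\bigl(1+1/|x-y|\bigr)\exp\bigl(-2s\log_2^a(1/|x-y|)\bigr)\,\mathrm{d}y.
\]
The main calculation is to show $W_k \lesssim k^{-b}\exp(-2sk^a)$ uniformly in $x$. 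I would perform a dyadic annular splitting $|x-y|\in[2^{-k+j},2^{-k+j+1}]$ and use the Schwartz decay $|\phi_k(z)|\lesssim 2^{kd}(1+2^k|z|)^{-N}$ with $N>d$: the inner region $j\leq 0$ supplies the main contribution $k^{-b}\exp(-2sk^a)$, while the outer annuli produce a geometrically summable tail in $j$. Multiplying by $\exp(2sk^a)$ and summing in $k$ then leaves $\sum_k k^{-b}\|g\|_{L^2(\T^d)}^2$, which is finite precisely because $b>1$.

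For the left-hand inclusion $\|\theta\|_{X_{a,s,-b}(\T^d)} \lesssim \|\theta\|_{B^{\exp,a,s}(\T^d)}$, I would split frequencies at $k_0\coloneqq\lfloor\log_2(1/|x-y|)\rfloor$, writing $\theta=\theta_{k_0}^{\le}+\theta_{k_0+1}^{\ge}$. For the high-frequency part, Cauchy--Schwarz combined with the tail estimate $\sum_{k>k_0}\exp(-2sk^a)\lesssim k_0^{1-a}\exp(-2sk_0^a)$ (obtained by comparison with $\int_{k_0}^{\infty}\exp(-2st^a)\,\mathrm{d}t$) yields
\[
|\theta_{k_0+1}^{\ge}(z)|\lesssim k_0^{(1-a)/2}\exp(-sk_0^a)\,g_1(z),\qquad g_1(z)\coloneqq\Bigl(\sum_k \exp(2sk^a)|\theta_k(z)|^2\Bigr)^{1/2},
\]
with $\|g_1\|_{L^2(\T^d)}=\|\theta\|_{B^{\exp,a,s}(\T^d)}$ by orthogonality. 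For the low-frequency part, the identity $\phi_k=\phi_k\ast(\phi_{k-1}+\phi_k+\phi_{k+1})$ from \cref{27} together with the Schwartz decay of $\phi_k$ give the pointwise bound $|\theta_k(x)-\theta_k(y)|\lesssim |x-y|\,2^k(M\theta_k(x)+M\theta_k(y))$ whenever $|x-y|\lesssim 2^{-k}$. Cauchy--Schwarz and $\sum_{k\le k_0}2^{2k}\exp(-2sk^a)\lesssim 2^{2k_0}\exp(-2sk_0^a)$, together with $|x-y|\,2^{k_0}\lesssim 1$, then give $|\theta_{k_0}^{\le}(x)-\theta_{k_0}^{\le}(y)|\lesssim \exp(-sk_0^a)(g_2(x)+g_2(y))$, where $g_2(z)\coloneqq(\sum_k \exp(2sk^a)(M\theta_k(z))^2)^{1/2}$ is controlled in $L^2$ by $\|\theta\|_{B^{\exp,a,s}(\T^d)}$ via the Hardy--Littlewood maximal inequality. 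Setting $g\coloneqq g_1+g_2$ and using that $k_0^{(1-a)/2}\lesssim k_0^b$ (which holds since $b>1>(1-a)/2$) together with $k_0\asymp\log_2(1/|x-y|)$ then delivers the defining bound for $X_{a,s,-b}$.

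I expect the main obstacle to be the annular analysis of $W_k$ in the first half: three competing quantities (the mass of $\phi_k$ concentrated at $|z|\sim 2^{-k}$, the polynomial logarithmic factor, and the exponential weight) must be balanced simultaneously, and the resulting summability threshold $b>1$ quantifies the sharp polynomial loss between the pointwise Calder\'on description and the frequency-side Besov description.
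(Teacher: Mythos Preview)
Your approach is correct but differs from the paper's on both sides. For the left inequality, the paper constructs a scale-dependent function $g^l$ (from $\exp(sl^a)|\theta_{l+1}^{\ge}|$ and $2^{-l}\exp(sl^a)M\nabla\theta_l^{\le}$, the latter via the Morrey-type inequality applied to the whole low-pass $\theta_l^{\le}$) that satisfies the pointwise bound with \emph{no} polynomial loss and with $\|g^l\|_{L^2}\lesssim\|\theta\|_{B^{\exp,a,s}}$ uniformly in $l$; a single $g$ is then obtained as $g=\sum_l l^{-b}g^l$, and it is the summability of this series that forces $b>1$. Your square functions $g_1,g_2$ are scale-free from the outset, at the price of the factor $k_0^{(1-a)/2}$ coming from the tail sum $\sum_{k>k_0}\exp(-2sk^a)\lesssim k_0^{1-a}\exp(-2sk_0^a)$; since $(1-a)/2<1<b$, you in fact prove the slightly sharper embedding $\|\theta\|_{X_{a,s,-(1-a)/2}}\lesssim\|\theta\|_{B^{\exp,a,s}}$. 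For the right inequality, the paper bypasses kernel estimates entirely: it applies Plancherel to the finite difference $\theta(\cdot+h)-\theta$ with $h=2^{-l}e_j$, uses $|1-e^{i\eta\cdot h}|\gtrsim 1$ on the shell $|\eta|\sim 2^l$ to extract $\sum_{|\eta|\sim 2^l}|\hat\theta(\eta)|^2$, and sums in $l$ (again needing $\sum_l l^{-b}<\infty$). Your dyadic annular evaluation of $W_k$ (note that $W_k$ is actually independent of $x$, which streamlines your argument) is more hands-on but arrives at the same threshold $b>1$.
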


\begin{proof}
We start with the first inequality and let $\theta\in B^{\exp,a,s}(\T^d)$. Given two different points $x,\,y\in \T^d$, we choose $l\in\N$ such that $2^{-l} \le |x-y| < 2^{-l+1}$. 
We split $\theta$ into low and high frequency parts, $\theta=\theta_{l}^\leq+\theta_{l+1}^{\ge}$, as defined above. Then, by the almost-orthogonality of the Littlewood--Paley decomposition (see \cite[Section 6.3.3]{MR3243734}) and the definition of the Besov norm, it holds that
\begin{align}
\norm{\theta_{l+1}^\ge}_{L^2(\T^d)}\lesssim \left( \sum_{k\ge l+1} \|\theta\|_{L^2(\T^d)}^2\right)^{1/2}  \lesssim \exp(-sl^a)   \norm{\theta}_{B^{\exp,a,s}(\T^d)}.\label{bd high fre}
\end{align}
In view of our choice for $l$, we thus have for the high-frequency part the estimate
\[
|\theta_{l+1}^\geq(x) - \theta_{l+1}^\geq(y)| \le |\theta_{l+1}^\geq(x)|+|  \theta_{l+1}^\geq(y)| \le \exp\left(-s\log_2^a\frac1{|x-y|}\right)\left(  g_{\ge}^l(x) +  g_{\ge}^l(y)\right),
\]
with $g_{\ge}^l \coloneqq \exp(sl^a) |\theta_{l+1}^{\ge}|$ being controlled in $L^2$ by $\|\theta\|_{B^{\exp,a,s}(\T^d)}$. 

For the low-frequency part, we use the Morrey-type inequality (cf. \cite[Appendix A]{MR2369485}): 
\begin{equation}\label{20}
|\theta_l^{\le}(x)-\theta_l^{\le}(y)| \lesssim |x-y|\left(M\grad\theta_l^{\le}(x) + M\grad\theta_l^{\le}(y)\right),
\end{equation}
where $M$ is the Hardy--Littlewood maximal function on the torus. We now invoke the Hardy--Littlewood maximal inequality (see \cite[Theorem 6.3.2]{MR3243734}) and the almost-orthogonality of the Littlewood--Paley decomposition to bound
\[
\| M\grad\theta_l^{\le}\|_{L^2(\T^d)} \lesssim \|\grad \theta_l^{\le}\|_{L^2(\T^d)} \lesssim \left(\sum_{k\le l} \|\grad\theta_k\|_{L^2(\T^d)}^2\right)^{1/2}.
\] 
Due to the estimate $\norm{\grad\theta_k}_{L^2(\T^d)}\lesssim 2^k \norm{\theta_k}_{L^2(\T^d)}$, and the monotonicity of $k\mapsto 2^k \exp(-sk^a)$ for sufficiently large $k$, the latter implies
\[
\| M\grad\theta_l^{\le}\|_{L^2(\T^d)}  \lesssim 2^l \exp(-sl^a)   \|\theta\|_{B^{\exp,a,s}(\T^d)}.
\]
Combining this and our choice of $l$, from \cref{20}, we deduce 
\[
|\theta_l^{\le}(x) - \theta_l^{\le}(y)|\lesssim \exp\left(-s \log_2^a \frac1{|x-y|}\right) \left(g_{\le}^l(x) + g_{\le }^l(y)\right),
\]
with $g_{\le}^l : = 2^{-l} \exp(sl^a) M\grad\theta_l^{\le}$ being controlled in $L^2$ by $\|\theta\|_{B^{\exp,a,s}(\T^d)}$.

Combining the estimates for the low and high-frequency parts via the triangle inequality, we arrive at
\[
|\theta(x) - \theta(y)|\lesssim \exp\left(-s \log_2^a \frac1{|x-y|}\right) \left(g^l(x) + g^l(y)\right),
\]
where $g^l = g_{\ge}^l+ g_{\le}^l$ is a function whose $L^2$ norm is controlled by $\|\theta\|_{B^{\exp,a,s}(\T^d)}$ uniformly in $l$. In order to construct a function $g$ on the right-hand side that is independent of $x$ and $y$, we set
\[
g = \sum_{l\ge1} l^{-b} g_l
\]
with $b>1$ in order to ensure the summability: \[
\|g\|_{L^2(\T^d)} \le \sum_{l\ge 1} l^{-b} \|g_l\|_{L^2(\T^d)} \lesssim \left(\sum_{l\ge 1} l^{-b}\right) \|\theta\|_{B^{\exp,a,s}(\T^d)}\lesssim \|\theta\|_{B^{\exp,a,s}(\T^d)}.
\]
With this, $\theta$ satisfies the bound \cref{def g} as $|\log|x-y||\sim l$ and thus, we obtain
\[
\|\theta\|_{X_{a,s,-b}(\T^d)} \lesssim \|\theta\|_{B^{\exp,a,s}(\T^d)},
\]
for any $b>1$.

For the other inclusion, we pick $\theta\in X_{a,s,b}$ and  first note that \begin{align}
\norm{\theta(\cdot+h)-\theta(\cdot)}_{L^2(\T^d)}\le2 \log_2^{-b}\left(1+ \frac1{|h|} \right)\exp\left(-s\log_2^a\frac1{|h|}\right)\norm{\theta}_{X_{a,s,b}(\T^d)}
\end{align}
for any $h\in \R^d$. We observe that the left-hand side can be rewritten via Plancherel's theorem,
\[
\|\theta(\cdot +h) - \theta(\cdot)\|_{L^2(\T^d)}^2 = \sum_{\eta\in\Z^d} |1-e^{i\eta\cdot h}|^2|\hat \theta(\eta)|^2 ,
\]
and we notice that $|1-e^{i\eta\cdot h}| \gtrsim 1$ for $|\eta\cdot h|\in (1/2,1]$. In particular, choosing $h=2^{-l}e_j$ for each standard basis vector $e_j$, and summing over $j$, we get that
\[
\left(\sum_{|\eta|\in(2^{l-1},2^{l}]} |\hat \theta(\eta)|^2\right)^{1/2} \lesssim \log_2^{-b}\left(1+2^{l}\right) \exp\left(-sl^a\right) \|\theta\|_{X_{a,s,b}(\T^d)},
\]
which implies
\[
 \sum_{|\eta|\in(2^{l-1},2^{l}]} \exp\left(2s (\log^{-a} 2)\log^a|\eta|\right) |\hat \theta(\eta)|^2  \lesssim \log_2^{-2b}\left(1+2^{l}\right) \|\theta\|_{X_{a,s,b}(\T^d)}^2.
\]
It remains to sum over $l\in\N$ and observe that the series over $l^{-2b}$ is convergent for $b>1/2$ to  conclude that
\[
\|\theta\|_{H^{\exp,a,s\log^{-a}2}(\T^d)} \lesssim \|\theta\|_{X_{a,s,b}(\T^d)}.
\]
The desired statement then follows via the equivalence in \cref{lm:besov-equiv}.
\end{proof}

\subsection{Evolution of \texorpdfstring{log\textsuperscript{2}-H\"older}{log2-H\"older} norms by log-Lipschitz vector fields}

We say that a function $\theta:\T^d \to \R$ is \emph{log\textsuperscript{2}-H\"older continuous}, if
\[
[\theta]_{C^{\log^2}(\T^d)} \coloneqq \sup_{|h|\le 1/2} |\log|h||^2 \|\theta(\cdot +h)-\theta\|_{L^{\infty}(\T^d)}<\infty.
\]
We set
\[
\|\theta\|_{C^{\log^2}(\T^d)} \coloneqq \|\theta\|_{L^{\infty}(\T^d)}+ [\theta]_{C^{\log^2}(\T^d)}.
\]

We prove a $\log^2$-H\"older estimate on the solution of a continuity equation driven by a divergence-free $\log$-Lipschitz vector field.

\begin{lemma}\label{L11}
Suppose that $ v$ is a divergence-free vector field, which is log-Lipschitz uniformly in time and let $X$ be the corresponding flow, i.e., 
\[
\begin{cases}
    \pt X(t,x) = v(t,X(t,x)), & t >0, \\
    X(0,x) = x, & x \in \T^d.
\end{cases}
\]
Then there exists a constant $C>0$ (depending on $v$) such that
\begin{equation}\label{30}
|X(t,x) - X(t,y)|\ge |x-y|^{\exp(Ct)},
\end{equation}
for any $t >0$, $x,y\in\T^d$ such that $|x-y|<1$. Moreover, if $\theta$ is transported by the flow, i.e., $\theta(t,\cdot)\circ X(t,\cdot)  =\theta_0$, then
\[
\|\theta(t,\cdot)\|_{C^{\log^2}(\T^d)}\lesssim e^{2Ct} \|\theta_0\|_{C^{\log^2}(\T^d)}, \qquad \text{ for $t >0$}.
\]
\end{lemma}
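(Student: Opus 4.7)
The plan is to derive \cref{30} by an Osgood--Gronwall bootstrap on the ODE satisfied by $|X(t,x)-X(t,y)|$, and then to feed this separation estimate into the $C^{\log^2}$ modulus of $\theta_0$, using volume preservation of the flow for the sup-norm part.

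For \cref{30}, I fix $x,y\in\T^d$ with $|x-y|<1$ and set $f(t):=|X(t,x)-X(t,y)|$. Subtracting the flow ODEs at $x$ and $y$ and applying the log-Lipschitz bound on $v$ (uniformly in $\tau$, $|v(\tau,a)-v(\tau,b)|\le L|a-b|(1+|\log|a-b||)$ for $|a-b|\le 1$) yields the scalar differential inequality $|\dot f(t)|\le L f(t)(1+|\log f(t)|)$ as long as $f(t)<1$. Setting $g(t):=-\log f(t)$, which is positive at $t=0$, linearises this to $|\dot g(t)|\le L(1+g(t))$, so that Gronwall delivers $1+g(t)\le (1+g(0))e^{Lt}$. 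Unwinding gives
\[
|X(t,x)-X(t,y)|\ge e^{1-e^{Lt}}|x-y|^{\exp(Lt)},
\]
and absorbing the prefactor $e^{1-e^{Lt}}$ by a slight enlargement of the exponent yields \cref{30} with a suitable constant $C>L$ depending on the log-Lipschitz constant of $v$.

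For the $C^{\log^2}$ estimate, divergence-freeness of $v$ implies that $X(t,\cdot)$ is measure preserving, so the transport identity $\theta(t,\cdot)\circ X(t,\cdot)=\theta_0$ immediately gives $\|\theta(t,\cdot)\|_{L^\infty(\T^d)}=\|\theta_0\|_{L^\infty(\T^d)}$. For the semi-norm, I pick $y_1,y_2\in\T^d$ with $|y_1-y_2|\le 1/2$, set $x_i:=X(t,\cdot)^{-1}(y_i)$ so that $\theta(t,y_i)=\theta_0(x_i)$, and apply \cref{30} to $(x_1,x_2)$: since $y_i=X(t,x_i)$, this reads $|y_1-y_2|\ge |x_1-x_2|^{\exp(Ct)}$, hence $|\log|x_1-x_2||\ge e^{-Ct}|\log|y_1-y_2||$. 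Inserting this into the log-squared modulus of $\theta_0$ (with the range $|x_1-x_2|>1/2$ treated separately via the preserved $L^\infty$ bound on $\theta$) gives
\[
|\theta(t,y_1)-\theta(t,y_2)|\le [\theta_0]_{C^{\log^2}(\T^d)}|\log|x_1-x_2||^{-2}\le e^{2Ct}[\theta_0]_{C^{\log^2}(\T^d)}|\log|y_1-y_2||^{-2},
\]
which is exactly the claimed $e^{2Ct}$ growth of the semi-norm.

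The main obstacle is absorbing the prefactor $e^{1-e^{Lt}}$ from the Gronwall step into the exponent $\exp(Ct)$ uniformly in $|x-y|\in(0,1)$: when $-\log|x-y|\ge 1$ this is immediate by enlarging $C$, while the residual regime $|x-y|$ close to $1$ requires a short continuity argument, since $|x-y|^{\exp(Ct)}$ is itself close to $|x-y|$ there on any bounded time interval. A secondary piece of bookkeeping arises in the transfer step, in handling the case where the preimages $x_1,x_2$ fall outside the range $|x_1-x_2|\le 1/2$ entering the definition of the $C^{\log^2}$ semi-norm; this is dispatched using the preserved $L^\infty$ bound on $\theta$ together with the fact that $|\log|y_1-y_2||^{-2}$ is bounded below by a constant in the corresponding range.
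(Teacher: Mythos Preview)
Your proof is correct and follows essentially the same route as the paper: both derive \cref{30} by a Gr\"onwall argument applied to $\log|X(t,x)-X(t,y)|$, and both transfer the flow estimate to the $C^{\log^2}$ seminorm by comparing $|\log|x_1-x_2||$ with $|\log|y_1-y_2||$ through \cref{30}. If anything, you are more careful than the paper: you keep track of the additive constant in the log-Lipschitz modulus (yielding the prefactor $e^{1-e^{Lt}}$, which the paper's differential inequality silently drops), and you explicitly flag the edge cases $|x-y|\to 1$ and $|x_1-x_2|>1/2$, which the paper does not discuss.
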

\begin{proof}
    For the first estimate, it is enough to assume that $|X(t,x)-X(t,y)|<1$, because, otherwise, there is nothing to prove. In this case, the estimate follows immediately from the $\log$-Lipschitz continuity, which in turn yields
    \[
    \left|\frac{\dd}{\dd t} |\log |X(t,x)-X(t,y)||\right| \lesssim |\log |X(t,x)-X(t,y)||,
    \]
    and from Gr\"onwall's lemma.

    The estimate on the solution to the transport equation is a direct consequence. Indeed, because $X^t$ is a homeomorphism on $\T^d$ and the maximal value of $\theta_0$ is preserved, it holds that
    \begin{align*}
    \|\theta(t,\cdot)\|_{C^{\log^2}(\T^d)}  & \lesssim \|\theta(t,\cdot)\|_{L^{\infty}(\T^d)} + \sup_{|x-y|\le 1/2} |\log|X(t,x)-X(t,y)||^2 |\theta(t,X(t,x))-\theta(t,X(t,y))|\\
    & = \|\theta_0\|_{L^{\infty}(\T^d)} + \sup_{|x-y|\le 1/2} |\log|X(t,x)-X(t,y)||^2 |\theta_0(x)-\theta_0(y)|\\
    &\lesssim \|\theta_0\|_{C^{\log^2}(\T^d)} \sup_{|x-y|<1/2} \frac{\log^2|X(t,x) - X(t,y)|}{\log^2|x-y|}.
    \end{align*}
    The stated bound follows from \cref{30}.
\end{proof}

Finally, we prove the following bound. 

\begin{lemma}\label{lem log holder}
Let $\theta\in C^{\log^2}(\T^2)$ be mean-zero, then it holds that \begin{align}\label{29}
\norm{\nabla^2\Delta^{-1}\theta}_{L^\infty(\T^2)}\leq \norm{\theta}_{C^{\log^2}(\T^2)}\end{align}
\end{lemma}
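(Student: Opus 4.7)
The plan is to realise $\nabla^2\Delta^{-1}$ on $\T^2$ as a Calder\'on--Zygmund singular-integral operator whose homogeneous kernel has vanishing mean on circles, and then to exploit this cancellation together with the Dini-type modulus of continuity encoded in $[\theta]_{C^{\log^2}(\T^2)}$. The mean-zero assumption on $\theta$ guarantees that $\Delta^{-1}\theta$ is well-defined on the torus.

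First I would decompose
\[
\nabla^2\Delta^{-1}\theta(x)=\mathrm{p.v.}\!\int_{\T^2}K(x-y)\,\theta(y)\dd y+R\theta(x),
\]
where $K$ is the $\R^2$-kernel of $\nabla^2\Delta^{-1}$, homogeneous of degree $-2$ of the form $K(z)=\Omega(z/|z|)/|z|^2$ with $\int_{S^1}\Omega\,\dd\sigma=0$, and $R$ is convolution against a smooth periodic correction coming from the periodization of the Green's function. The remainder term is trivially bounded by $\|\theta\|_{L^\infty(\T^2)}$.

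Next I would split the principal-value integral into a near zone $\{|x-y|<1/4\}$ and its complement. On the complement, $K$ is uniformly bounded, contributing again a term of size $\|\theta\|_{L^\infty(\T^2)}$. On the near zone I use the cancellation $\int_{S^1}\Omega=0$ to subtract $\theta(x)$ for free, obtaining
\[
\int_{|x-y|<1/4}K(x-y)\bigl(\theta(y)-\theta(x)\bigr)\dd y,
\]
and then apply the $\log^2$-H\"older estimate $|\theta(y)-\theta(x)|\le [\theta]_{C^{\log^2}(\T^2)}\,|\log|x-y||^{-2}$.

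The core observation is the (integrable) bound
\[
\int_{|z|<1/4}\frac{\dd z}{|z|^{2}\log^{2}(1/|z|)}\lesssim \int_{0}^{1/4}\frac{\dd r}{r\,\log^{2}(1/r)}=\int_{\log 4}^{\infty}\frac{\dd u}{u^{2}}<\infty,
\]
using the substitution $u=-\log r$, so that precisely the gain from $\log^{2}$-H\"older continuity compensates the Calder\'on--Zygmund singularity. Combined with the elementary $L^\infty$ estimates for the far-field and remainder terms, this yields the claimed bound by $\|\theta\|_{C^{\log^2}(\T^2)}$. The only mild obstacle is being careful about the principal-value cancellation in the periodic setting, which is handled by isolating the homogeneous $\R^2$-part of $\nabla^2 G$ from the smooth periodic correction, so that the cancellation argument applies exactly as on $\R^2$.
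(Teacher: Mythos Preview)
Your argument is correct and classical: the $\log^2$-modulus of continuity is a Dini modulus, and Calder\'on--Zygmund operators with mean-zero angular part map Dini-continuous functions into $L^\infty$. One small imprecision is that the distributional kernel of $\partial_i\partial_j\Delta^{-1}$ on $\R^2$ carries, besides the principal-value part $K_{ij}$, a local contribution $\tfrac{1}{2}\delta_{ij}\delta_0$ (so that the trace reproduces the identity); this should be grouped with your remainder $R$, but it is trivially controlled by $\|\theta\|_{L^\infty}$ and does not affect the argument.

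The paper takes a different route: it works entirely through the Littlewood--Paley decomposition $\theta=\sum_{k\ge1}\theta\ast\phi_k$, observes that $\|\nabla^2\Delta^{-1}(\theta\ast\phi_k)\|_{L^\infty}\lesssim\|\theta\ast\phi_k\|_{L^\infty}$ uniformly in $k$ (since $\nabla^2\Delta^{-1}\phi_k$ is an $L^1$-dilation family), and then shows $\|\theta\ast\phi_k\|_{L^\infty}\lesssim 2^{-k}\|\theta\|_{L^\infty}+k^{-2}[\theta]_{C^{\log^2}}$ by exploiting the mean-freeness of $\phi_k$ and splitting the convolution at scale $2^{-k/2}$. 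Your approach is more self-contained and avoids Littlewood--Paley machinery altogether; the paper's approach has the advantage of being phrased in the same frequency-block language as the rest of the article, and of yielding directly the summable Besov-type bound $\sum_k\|\theta\ast\phi_k\|_{L^\infty}<\infty$, which is slightly stronger information than the plain $L^\infty$ estimate on $\nabla^2\Delta^{-1}\theta$.
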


\begin{proof}
Thanks to the triangle inequality and the Littlewood--Paley decomposition $\theta = \sum_{k\ge 1}\theta\ast\phi_{k}$, it suffices to estimate the sum
\begin{align}
\sum_{k\geq 1}\norm{\nabla^2\Delta^{-1}\theta*\phi_k}_{L^\infty(\T^2)}\label{Besov bd}
\end{align}
against the right-hand side of \cref{29}. We start by noting that
\[
\grad^2\Delta^{-1}\theta*\phi_k = \theta*\phi_k*\grad^2 \Delta^{-1}\left(\phi_k+\phi_{k-1}+\phi_{k-2}\right),
\]
by the virtue of \cref{27}, and thus, via Young's convolution inequality and the fact that Littlewood--Paley multipliers (and thus also the $\grad^2 \Delta^{-1}\phi_k$'s) are defined as $L^1$ dilations, it holds that
\[
\|\grad^2\Delta^{-1}\theta*\phi_k\|_{L^{\infty}(\T^2)}\le \|\theta*\phi_k\|_{L^{\infty}(\T^2)}\|\grad^2 \Delta^{-1}\left(\phi_k+\phi_{k-1}+\phi_{k-2}\right)\|_{L^1(\R^2)} \lesssim \|\theta*\phi_k\|_{L^{\infty}(\T^2)}.
\]
It thus suffices to estimate $\sum_{k\ge 1}\|\theta*\phi_k\|_{L^{\infty}(\T^2)}$ against the right-hand side of \cref{29}.

We fix $x\in\T^2$ and use the mean-freeness of $\phi_k$  to the effect that
\begin{align*}
|\theta*\phi_k(x)|&\leq \left|\int_{\R^2} (\theta(x-y)-\theta(x))\phi_k(y)\dd y\right|\\
&\lesssim \norm{\theta}_{L^\infty(\T^2)}\int_{\R^2\backslash B_{2^{-k/2}}(0)}|\phi_k(y)|\dd y \\ &\qquad +  \sup_{|y|\leq \frac{1}{2}}\left(|\log |y||^2\norm{\theta(\cdot+y)-\theta}_{L^\infty(\T^2)}\right) \int_{B_{2^{-k/2}}(0)}| \log |y||^{-2}|\phi_k(y)|\dd y.
\end{align*}
We notice, on the one hand, by scaling that
\[
\int_{\R^2\setminus B_{2^{-k/2}}(0)} |\phi_k(y)|\dd y = \int_{\R^2\setminus 2^{k/2}(0)} |\phi_1(z)|\dd z \le 2^{-k} \int_{\R^2} |z|^2|\phi_1(z)|\dd z \lesssim 2^{-k},
\]
where the convergence of the integral in the last inequality follows from the fact that $\phi_1$ is a Schwartz function. On the other hand, we have that $|\log |y|| \gtrsim k$ for $k\in B_{2^{-k/2}}(0)$, combining these observations, we arrive at 
\[ 
|\theta*\phi_k(x)| \lesssim 2^{-k} \|\theta\|_{L^{\infty}(\T^2)} + \frac1{k^2} \sup_{|y|\leq \frac{1}{2}}|\log |y||^2\norm{\theta(\cdot+y)-\theta}_{L^\infty(\T^2)} .
\]
Summation over $k$ gives the result.
\end{proof}

\subsection{General setting}
\label{ssec:setting}

We recall that $f \in L^1_{\mathrm{loc}}(\T^d)$ belongs to $\mathrm{BMO}(\T^d)$, the space of functions of \emph{bounded mean oscillation}, if 
\[
\|f\|_{\mathrm{BMO}(\T^d)}\coloneqq \sup_{x\in\T^d,\,r>0} \fint_{B_r(x)}\left|f(z)-\fint_{B_r(x)} f(y) \, \d y \right| \, \d z<\infty.
\]

We will prove \cref{th:transport} assuming that the velocity field satisfies the following conditions:
\begin{gather}
 \operatorname{div} u = 0, \label{ass:div-free} \\ 
      \Big(\sum_{k\ge 1} |\nabla u_k(t,\cdot)|^2 \Big)^{\frac{1}{2}} \in L^1((0,+\infty);L^{\exp}(\T^d)), \label{ass:v-1}\\
     \Big(\sum_{k\ge 1}  2^{-2k} |\grad^2 u_k(t,\cdot)|^2\Big)^{\frac12}   \in L^1((0,+\infty);L^{\exp}(\T^d)), \label{ass:v-2}\\
     \nabla u(t,\cdot) \in L^1((0,+\infty);L^{\exp}(\T^d)), \label{ass:v-3}
     \end{gather}
    where the frequency blocks $u_k$ are defined in the same way as in \cref{ssec:lp}. To simplify the notation in the following, we set
    \[
      w(t) \coloneqq   \Big(\sum_{k\ge 1} |\nabla u_k(t,\cdot)|^2\Big)^{\frac{1}{2}} + \Big(\sum_{k\ge 1}  2^{-2k} |\grad^2 u_k(t,\cdot)|^2\Big)^{\frac12}  + |\nabla u (t,\cdot)|  .
    \]
and we notice that the hypotheses \eqref{ass:v-1}, \eqref{ass:v-2} and \eqref{ass:v-3} imply that
  \begin{align}    \label{ass:v-t}
 w \in L^1((0,+\infty),L^{\exp}(\T&^d)) 
    \end{align}

Conditions \crefrange{ass:v-1}{ass:v-t} follow from the assumption $\nabla u \in L^1((0,+\infty);\BMO(\T^d)$ used in the statement of \cref{th:transport}, as it follows from the Lemma below.

\begin{lemma}\label{lm:bmo-consequences}
Let $v\in \BMO(\T^d)$, let $\chi_k=2^{kd}\chi_1(2^k\cdot)\in \mathcal{S}(\R^d)$ be a family of mean-free functions, then 
\begin{align}
\norm{v}_{L^{\exp}(\T^d)}&\lesssim \norm{v}_{\BMO(\T^d)}\\
\norm{\left(\sum_{k\ge 1} |v*\chi_k|^2\right)^{1/2}}_{L^{\exp}(\T^d)}&\lesssim \norm{v}_{\BMO(\T^d)}.
\end{align}
\end{lemma}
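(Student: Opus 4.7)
The first inequality follows directly from the John--Nirenberg inequality for $\BMO$ functions. Setting $\bar v := \fint_{\T^d} v$, the John--Nirenberg estimate gives
\[
|\{x \in \T^d : |v(x) - \bar v| > \lambda\}| \le c_1 \exp\bigl(-c_2 \lambda / \|v\|_{\BMO(\T^d)}\bigr), \quad \lambda > 0;
\]
multiplying by $\beta e^{\beta \lambda}$ and integrating in $\lambda$ yields, for $\beta$ sufficiently small depending on $c_2$, the bound $\int_{\T^d} \exp(\beta |v - \bar v|) \,\d x \le 2$, whence $\|v - \bar v\|_{L^{\exp}(\T^d)} \lesssim \|v\|_{\BMO(\T^d)}$ by the definition of the Luxemburg norm. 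In the intended application $v = \grad u$ has zero mean by periodicity, so $\bar v = 0$.

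For the second inequality, my plan is to first establish that the square function $S(v) := (\sum_{k \ge 1} |v*\chi_k|^2)^{1/2}$ itself belongs to $\BMO(\T^d)$ with $\|S(v)\|_{\BMO(\T^d)} \lesssim \|v\|_{\BMO(\T^d)}$, after which the first inequality applied to $S(v)$ concludes. To establish the $\BMO$ bound, I would fix a ball $B \subset \T^d$ of radius $r$ and decompose
\[
v = v_{2B} + v_{\mathrm{loc}} + v_{\mathrm{far}}, \qquad v_{\mathrm{loc}} := (v - v_{2B})\mathbf{1}_{2B}, \quad v_{\mathrm{far}} := (v - v_{2B})\mathbf{1}_{(2B)^c}.
\]
Since each $\chi_k$ is mean-free we have $v_{2B} * \chi_k = 0$, so by Minkowski $S(v) \le S(v_{\mathrm{loc}}) + S(v_{\mathrm{far}})$.

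For the local piece I would use the $L^2$-boundedness of $S$, which follows from Plancherel and the observation $\sum_k |\hat{\chi_1}(2^{-k}\eta)|^2 \lesssim 1$ uniformly in $\eta$ (combining $\hat{\chi_1}(0) = 0$ to kill small $2^{-k}|\eta|$ and Schwartz decay for large $2^{-k}|\eta|$). Together with the $L^2$-form of John--Nirenberg, $\|v - v_{2B}\|_{L^2(2B)}^2 \lesssim |B|\|v\|_{\BMO(\T^d)}^2$, this yields $\fint_B S(v_{\mathrm{loc}}) \,\d x \lesssim \|v\|_{\BMO(\T^d)}$. For the far piece, the oscillation admits the pointwise bound
\[
|S(v_{\mathrm{far}})(x) - S(v_{\mathrm{far}})(y)| \le \Bigl(\sum_{k\ge 1} |v_{\mathrm{far}}*\chi_k(x) - v_{\mathrm{far}}*\chi_k(y)|^2\Bigr)^{1/2},
\]
and each term is controlled via the Lipschitz estimate $|\chi_k(x - z) - \chi_k(y - z)| \lesssim |x - y| \cdot 2^{k(d+1)} (1 + 2^k|x - z|)^{-N}$. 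Decomposing $(2B)^c$ into dyadic annuli $2^{j+1}B \setminus 2^j B$ and invoking $|v_{2^{j+1}B} - v_{2B}| \lesssim j\|v\|_{\BMO(\T^d)}$ reduces the estimate to controlling a double sum in $j$ and $k$.

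The main obstacle is handling this double sum: the pairing of frequency scale $2^{-k}$ against annular distance $2^j r$ must be carried out in the two regimes $2^{-k} \ll 2^j r$ and $2^{-k} \gg 2^j r$. In both regimes the arbitrary-order polynomial decay coming from the Schwartz nature of $\chi_1$, together with the logarithmic growth of $\BMO$ dyadic means, yields geometric convergence and the required bound $|S(v_{\mathrm{far}})(x) - S(v_{\mathrm{far}})(y)| \lesssim \|v\|_{\BMO(\T^d)}$ uniformly in $x, y \in B$. Combined with the local estimate, this gives $[S(v)]_{\BMO(\T^d)} \lesssim \|v\|_{\BMO(\T^d)}$, and the second inequality then follows by applying John--Nirenberg to $S(v)$.
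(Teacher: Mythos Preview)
Your approach is correct and takes a genuinely different route from the paper's proof.

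For the first inequality both you and the paper invoke John--Nirenberg, and both implicitly rely on $\bar v = 0$ (as you note, this holds in the application $v=\nabla u$).

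For the second inequality the paper proceeds quite differently: it cuts off the periodic extension of $v$ to pass to $\R^d$, applies the Fefferman--Stein decomposition $v = v^0 + \sum_j R_j v^j$ with $\|v^j\|_{L^\infty}\lesssim\|v\|_{\BMO}$, commutes the Riesz transforms with the convolutions so that $(R_j v^j)*\chi_k = v^j * (R_j\chi_k)$ with $R_j\chi_k$ again a dilated mean-free Schwartz family, and then appeals to the Littlewood--Paley theorem in its $L^p$ form with constant $O(p)$ to obtain $\|S(v)\|_{L^p(\T^d)}\lesssim p\,\|v\|_{\BMO}$ uniformly in $p\ge 2$; summing the Taylor series of the exponential via Stirling then gives the $L^{\exp}$ bound directly, bypassing any $\BMO$ estimate for $S(v)$.

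Your argument is the standard vector-valued Calder\'on--Zygmund proof that $S:\BMO\to\BMO$: the kernel $K(x)=(\chi_k(x))_{k\ge1}$ satisfies the size and regularity conditions in $\ell^2$ norm (this is precisely what makes the double sum in $j,k$ converge geometrically, as you indicate), and the local-plus-far decomposition is the textbook endpoint argument. This is more self-contained in that it avoids the Fefferman--Stein duality theorem, whereas the paper's approach trades that depth for a shorter reduction to black-box $L^p$ inequalities. One small point you should make explicit: applying John--Nirenberg to $S(v)$ only controls $S(v)-\overline{S(v)}$ in $L^{\exp}$, so you also need $\big|\fint_{\T^d} S(v)\big|\lesssim\|v\|_{\BMO}$; this follows from the $L^2$ boundedness of $S$ you already established together with $\|v\|_{L^2(\T^d)}\lesssim\|v\|_{\BMO(\T^d)}$ for mean-zero $v$.
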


In particular, from \cref{lm:bmo-consequences}, we deduce that, if $ u= K*\omega$ ($K$ being the Biot--Savart kernel), with $\omega(t,\cdot)\in L^\infty(\T^2)$, then  
 \begin{align}
 \|w(t,\cdot )\|_{L^{\exp}(\T^2)}\lesssim \norm{\omega(t,\cdot)}_{L^\infty (\T^2)},\qquad t \ge 0.
 \end{align}

\begin{proof}
The first statement is a special case of the classical John--Nirenberg inequality (see, e.g., \cite[Chapter IV, Section 1.3]{MR1232192}).

For the second result, we approximate the periodic extension of $v$ with $\zeta_n v$ where $\zeta_n$ are compactly supported smooth functions converging to $1$. Then we can check that $\sum_{k\ge 1} |(\zeta_n v)*\chi_k|^2\rightarrow \sum_{k\ge 1} |v*\chi_k|^2$ locally uniformly and that $\zeta_n v$ is still in $\BMO(\R^d)$.

By the \emph{Fefferman--Stein decomposition}\footnote{ We introduce the cut-off extension to the whole space specifically to avoid reproving a statement of this result on the torus.} (see \cite[Theorem 3]{MR447953}), for every $f\in \BMO$, vanishing at $\infty$, there exist $f_0,\dots f_d$ such that \begin{align*}
f=f_0+R_1 f_1+\dots+ R_d f_d,
\end{align*}
where $R_1\dots R_d$ are the \emph{Riesz transforms}, defined for test functions through $\widehat{(R_j \varphi)}=-i \frac{\xi_j}{|\xi|}\widehat{\varphi}$, and $\norm{f_0}_{L^\infty(\R^d)}+\dots+\norm{f_d}_{L^\infty(\R^d)}\lesssim \norm{f}_{\BMO(\R^d)}$.

Now let $v^0,\dots v^d$ be such functions for $\psi_n v$, then it holds that \begin{align*}
\chi_k*(R_i v^i)=(R_i \chi_k) *v^i.
\end{align*}
We observe that, by the homogeneity of the Riesz transform, $R_i \chi_k=2^{kd}(R_i\chi_1)(2^{k}\cdot)$. Let us recall that, by the Littlewood--Paley theorem (see \cite[Theorem 6.1.2]{MR3243734}), 
\begin{align}
    \norm{\left(\sum_{k\geq 1} |f*\tilde{\chi}_k|^2\right)^\frac{1}{2}}_{L^p(\R^d)}\lesssim p \norm{f}_{L^p(\R^d)}\label{littlewood}
\end{align}
uniformly in $p\geq 2$, for any  family $\tilde{\chi}_k=2^{kd}\tilde{\chi}_1(2^k\cdot)$ of mean-free Schwartz functions. Now $v^i$ might not be in $L^p(\R^d)$, but, as \begin{align*}
\sum_{k\geq 1}\left|\int_{\R^d\backslash [-\pi,\pi]^d} \tilde{\chi}_k(x)\dd x\right|^2\lesssim \sum_{k\geq 1} 2^{-k}\lesssim 1,
\end{align*}
we let $\tilde{\chi}_k=R_i \chi_k$ and apply \cref{littlewood} to $I_{[-2\pi,2\pi]^d}v_k$ to deduce together with Young's inequality that  
\begin{align*}
 \norm{\left(\sum_{k\geq 1}|(\zeta_nv)*\chi_k|^2\right)^\frac{1}{2}}_{L^p([-\pi,\pi]^d)}&\lesssim \sum_{j=1}^d p\norm{v^j}_{L^p([2\pi,-2\pi]^d)}+\norm{I_{\R^d\backslash [-2\pi,2\pi]^d}v^j}_{L^\infty(\T^d)}\\ &\lesssim p\norm{\zeta_n v}_{\BMO(\T^d)}.
\end{align*}
Plugging this estimate into the  series expansion of the exponential, we obtain 
\begin{align*}
\int_{[-\pi,\pi]^d} \exp{}\left(\beta\left(\sum_{k\geq 1}|(\zeta_n v)*\chi_k|^2\right)^\frac{1}{2}\right)\dd x  
 &\le \sum_{p\ge 2} \frac{p^p}{p!} \left(C_d\beta \|v\|_{\BMO(\T^d)}\right)^p+1,
\end{align*}
where $C_d$ is some numerical constant depending on $d$ and the family of $\chi_k$'s.
Using Stirling's formula in the rough form $\frac{p^p}{p!}\lesssim e^p$, we observe that the series converges provided that $\beta$ is small enough so that $C \|v\|_{\BMO(\T^d)} \beta <1/e$.
\end{proof}

\section{Propagation of regularity for the transport equation}
\label{sec:proof-hyp}

In order to slightly simplify the notation, we introduce the multiplier
\[
m(k) \coloneqq \exp\left(2s k^a\right)
\]
that occurs in the definition of our Besov space $B^{\exp,a,s}(\T^d)$. Accordingly, in view of the proof of \cref{lm:besov-equiv}, the equivalent space $\mathbf{B}^{\exp,a,s}(\T^d)$ is the one associated with its derivative $m'(k)\approx k^{a-1}m(k)$. %Moreover, thanks to the linearity of the equation \cref{eq:transport}, we may rescale $\theta$ in such a way that
%\begin{equation}
%\label{5}
%\|\theta\|_{L^{\infty}(\T^d)}\ll1.
%\end{equation}

In order to analyze how the exponential Besov norms \cref{eq:besov-filtered-n} behave under the evolution, we mollify the transport equation \cref{eq:transport} with the low-pass filters $\psi_{k-1}$, and study the differences $\theta_k^{\ge} = \theta-\theta\ast \psi_{k-1}$. It is customary to write the resulting equation as a transport equation 
with  a forcing term,
\begin{align}\label{eq:t-ge}
\partial_t \theta_k^\geq +u\cdot \nabla \theta_k^\geq = [ \psi_{k-1}\ast,u\cdot ]\nabla\theta,
\end{align}
where  the term on the right-hand side is the commutator of the operations ``multiply by $u$'' and ``convolve with $\psi_{k-1}$'', that is,
\[
[\psi_{k-1}\ast,u\cdot]\nabla\theta(x) \coloneqq  \intR \theta(x-y) (u(x)-u(x-y))\cdot\nabla\psi_{k-1}(y)\dd y.
\]
Thanks to the regularity of $u$ and the smoothing effect of the convolution, the transport equation \cref{eq:t-ge} fits well into the theory of DiPerna and Lions \cite{DiPernaLions89}, and thus, $\theta_k^{\ge}$ is in fact a \emph{renormalized solution}.

\begin{proof}[Proof of \cref{th:transport}] 
Our aim is to show that a suitable differential inequality for $\norm{\theta(t,\cdot)}_{{\mathbf{B}}^{\exp, a,s}(\T^d)}^2$ holds, which then shows the theorem by Lemma \ref{lm:besov-equiv}.

Motivated by the approach in \cite{2203.10860}, we start from \cref{eq:t-ge} and compute
\begin{align*}
&\frac12\frac{\text{d}}{\text{d}t}\sum_{k\geq 1}m'(k)\intT (\theta_k^\ge)^2\dd x  \\
& =  \sum_{k\geq 1} m'(k) \intT \theta_k^{\ge} [\psi_{k-1}*,u\cdot]\grad \theta\dd x\\
&=\sum_{k\geq 1} m'(k)  \intT\intR \theta_k^\geq(x)\theta(x-y) u_{k+3}^\geq(x)\cdot\grad\psi_{k-1}(y)\, \dd y \dd x\\
&\qquad-\sum_{k\geq 1} m'(k) \intT\intR \theta_k^\geq(x)\theta(x-y)u_{k+3}^{\geq }(x-y)\cdot\grad\psi_{k-1}(y)\, \dd y\dd x\\
&\qquad+\sum_{k\geq 1} m'(k)  \intT\intR \theta_k^\geq(x)\theta_{k+4}^{\le}(x-y) \left(u_{k+2}^\leq(x)- u_{k+2}^{\leq }(x-y)\right)\cdot\grad\psi_{k-1}(y)\, \dd y\dd x\\
&=:\,\mathrm{I_1} - \mathrm{I_2} + \mathrm{I_3}.
\end{align*}
Notice that, here, we have eliminated a number of terms by invoking orthogonality properties that result from the narrow-bandedness of the Littlewood--Paley projections: this can be seen by restricting to non-zero phase blocks after executing Parseval's identity (cf. \cite{2203.10860} for details).

We shall estimate the terms $\mathrm{I}_1$, $\mathrm{I}_2$, and $\mathrm{I}_3$ separately.

\underline{Step 1.} \emph{Estimate for $\mathrm{I}_1$.} We start by writing 
\begin{align}
\mathrm{I_1} &= \sum_{n\ge 3} 2^{-n} \mathrm{J}_1(n),\quad \mbox{where }\mathrm{J}_1(n)\coloneqq \sum_{k\geq 1}^\infty m'(k)  \int_{\T^d} \theta_k^\geq (2^{k+n}u_{k+n}) \cdot \left(\theta*(2^{-k}\nabla\psi_{k-1})\right) \dd x.\label{eq:I1}
\end{align}
Using now first the Cauchy--Schwarz inequality and then the H\"older-type inequality \cref{eq:hi}, we estimate 
\begin{equation}
    \label{1}
\begin{aligned}
 |\mathrm{J}_1(n)|
 &\lesssim  \int_{\T^d} \Big(\sup_{k\ge 1}n(k) |\theta_k^\geq |\Big) \Big(\sum_{k\ge 1} |2^{k+n}u_{k+n}|^2 \Big)^{\frac12} \Big(\sum_{k\ge1}k^{-a}m(k)| \theta*(2^{-k}\nabla\psi_{k-1})|^2\Big)^{\frac12}\dd x\\
&\lesssim \Big\|{\sup_{k \ge 1} n(k) |\theta_k^\geq|}\Big\|_{L^2\log L(\T^d) }  \Big\|{\Big(\sum_{k\geq 1} |2^{k+n}u_{k+n}|^2\Big)^{\frac{1}{2}}}\Big\|_{L^{\exp}(\T^d)}  \\ & \qquad  \times \Big\|{\Big(\sum_{k\geq 1}k^{-a}m(k) |\theta*(2^{-k}\nabla\psi_{k-1})|^2\Big)^\frac{1}{2}}\Big\|_{L^2\log L(\T^d)} ,
\end{aligned}
\end{equation}
where $n(k)=k^{\frac32a-1} \exp\left(sk^a\right)$.
For the first term on the right-hand side, we use that $a\le 1/2$ and apply \cref{lm:1}.
The bound of the second term is already assured by \cref{ass:v-1}, and the control of the third term is provided by   \cref{lm:interp1}.  We thus conclude that
\[
|\mathrm{I}_1| \lesssim  \|w(t,\cdot)\|_{L^{\exp}} \|\theta\|_{B^{\exp,a,s}(\T^d)}\left(\|\theta\|_{B^{\exp,a,s}(\T^d)} + \|\theta\|_{L^{\infty}(\T^d)}\right),
\]
thanks to the convergence of the geometric series. 

\underline{Step 2.}  \emph{Estimate for $\mathrm{I_2}$.} This term can be treated very similarly to the previous one. Indeed, we first rewrite $\mathrm{I}_2$ as 
  \[
  \mathrm{I}_2 = \sum_{k\ge 1} m'(k)\int_{\T^d} \theta u_{k+3}^{\ge} \cdot (\theta_k^{\ge} \ast\grad\psi_{k-1})\dd x,
  \]
and then observe that $\theta_k^{\ge}$ and $\psi_{k-1}$ have only a small frequency overlap, so that  $\theta_k^{\ge} \ast\grad\psi_{k-1} =\theta_k  \ast\grad\psi_{k-1}$. 

Applying Parseval's identity and comparing the supports of the Fourier transforms of the other two terms of the integrand, we see that the low-frequency part in  $\theta = \theta_{k+1}^{\ge} + \theta_k^{\le}$ can be dropped. 
We are thus left with 
\begin{align*}
    \mathrm{I}_2= \sum_{k\ge 1}  m'(k) \intT \theta_{k+1}^{\ge}  u_{k+3}^{\ge} \cdot (\theta *\phi_k* \grad\psi_{k-1}) \dd x,
\end{align*}
which can be estimated in the same way as $\mathrm{I}_1$. 

\underline{Step 3.} \emph{Estimate for $\mathrm{I}_3$.} 
The (initial) strategy for estimating this term is again inspired by our previous work \cite{2203.10860}. We write the inner integral with the help of the fundamental theorem and Fubini's theorem as
\begin{align*}
& \int_{\R^d}   \theta_{k+4}^{\le}(x-y) \left(u_{k+2}^{\le}(x) - u_{k+2}^{\le}(x-y)\right)\cdot \grad\psi_{k-1}(y)\dd y\\
& = \int_0^1 \int_{\R^d}    \theta_{k+4}^{\le}(x-y)\grad u_{k+2}^{\le}(x-sy):\grad\psi_{k-1}(y)\otimes y\dd y\dd s\\
& = \grad u_{k+2}^{\le}(x) : \int_0^1 \int_{\R^d}   \theta_{k+4}^{\le}(x-y) \grad\psi_{k-1}(y)\otimes y\dd y \dd s\\
&\quad + \int_0^1 \int_{\R^d}    \left(\theta_{k+4}^{\le}(x-y)-\theta_{k+4}^{\le}(x)\right)\left(\grad u_{k+2}^{\le}(x-sy) - \grad u_{k+2}^{\le}(x)\right):\grad\psi_{k-1}(y)\otimes y\dd y\dd s\\
&\quad +\theta_{k+4}^{\le}(x) \int_0^1 \int_{\R^d}     \left(\grad u_{k+2}^{\le}(x-sy) - \grad u_{k+2}^{\le}(x)\right):\grad\psi_{k-1}(y)\otimes y\dd y\dd s.
\end{align*}
Using the fundamental theorem for the second term and the observation that the last term is vanishing after an integration by parts because $u$ is divergence-free by assumption, we have
\begin{align*}
& \int_{\R^d}   \theta_{k+4}^{\le}(x-y) \left(u_{k+2}^{\le}(x) - u_{k+2}^{\le}(x-y)\right)\cdot \grad\psi_{k-1}(y)\dd y\\
& = \grad u_{k+2}^{\le}(x) : \int_0^1 \int_{\R^d}   \theta_{k+4}^{\le}(x-y) \grad\psi_{k-1}(y)\otimes y\dd y \dd s\\
&\quad + \int_0^1 \int_0^1 \int_0^1 \int_{\R^d}   y\cdot  \grad\theta_{k+4}^{\le}(x-ry) \grad^2 u_{k+2}^{\le}(x-sty)  :\grad\psi_{k-1}(y)\otimes y\otimes y\dd y\dd s \dd t \dd r\\
&=: g^1_k(x) + g^2_k(x).
\end{align*}

We begin with the estimate of that portion of $\mathrm{I}_3$ that involves $g_k^1$. Using that the non-diagonal part of $\Phi_k(y) = \grad(y \psi_{k-1}(y) )$ vanishes on frequencies outside of $B_{2^{k-1}}(0)\setminus B_{2^{k-1}}(0)$ and that the diagonal part vanishes in the product with $\nabla u$ due to the divergence-freeness, we  write $g_k^1$ as
\[
g_k^1  = \grad u_{k+2}^{\le } : \theta_{k+1}^{\le} \ast \Phi_k = \grad u_{k+2}^{\le} : \left(\theta_k+\theta_{k-1}\right) \ast \Phi_k.
\]
Exploiting cancellations in frequency space via an application of Parseval's indentity, we may then decompose
\begin{equation}\label{7}
\begin{aligned}
 \sum_{k\ge 1} m'(k) \int_{\T^d} \theta_k^{\ge} g_k^1\dd x  & =  \sum_{k \ge 1}m'(k) \int_{\T^d} \left(\theta*\chi_{k}^4\right)\,\nabla u:(\theta*\chi_{k-1}^1 *\Phi_k) \dd x \\
&\qquad  - \sum_{k\geq 1} m'(k)\int_{\T^d} \left(\theta*\chi_{k}^4\right)\,\left(\nabla u\ast \chi_{k+3}^3\right):(\theta*\chi_{k-1}^1 *\Phi_k) \dd x ,
\end{aligned}
\end{equation}
where $\chi_k^j \coloneqq  \phi_k+\dots \phi_{k+j}$. To estimate the first term on the right-hand side, we use the H\"older inequality, first in $k$ and then in the  variant \cref{eq:hi} in $x$, 
\begin{align*}
\MoveEqLeft\sum_{k \ge 1}m'(k) \int_{\T^d} \left(\theta*\chi_{k}^4\right)\,\nabla u:(\theta*\chi_{k-1}^1 *\Phi_k) \dd x\\
&\le \intT |\grad u| \Big( \sum_{k\ge 1}m'(k) |\theta*\chi_k^4|^2\Big)^{\frac12} \Big(\sum_{k\ge 1} m'(k)|\theta*\chi_{k-1}^1* \Phi_k|^2\Big)^{\frac12}\, \dd x\\ 
& \le \|\grad u\|_{L^{\exp}(\T^d)} \Bigg\|\Big( \sum_{k\ge 1}  m'(k) |\theta*\chi_k^4|^2\Big)^{\frac12}\Bigg\|_{L^2\log L(\T^d)}  \Bigg\|\Big(\sum_{k\ge 1} m'(k) |\theta*\chi_{k-1}^1* \Phi_k|^2\Big)^{\frac12}\Bigg\|_{L^2\log L(\T^d)} ,
\end{align*}
and noticing that $m'(k)\sim k^{a-1} m(k)$ and $a-1 \le -a$ for $a\le 1/2$, we see that the Besov norms can be controlled via \cref{lm:interp1}. Using assumption \cref{ass:v-3}, we thus conclude that
\begin{align*}
\MoveEqLeft \sum_{k \ge 1}m'(k) \int_{\T^d} \left(\theta*\chi_{k}^4\right)\,\nabla u:(\theta*\chi_{k-1}^1 *\Phi_k) \dd x\\
&\lesssim \|w(t,\cdot)\|_{L^{\exp}(\T^d)} \left(\|\theta\|_{B^{\exp,a,s}(\T^d)}^2 + \|\theta\|_{L^{\infty}(\T^d)}^2\right).
\end{align*}

We can deal with the second term in \cref{7} essentially in the same manner as with $\mathrm{I}_1$. We omit the details.

It remains to consider the contribution of $g_k^2$. We first notice (as in \cite[p. 15]{2203.10860}) that 
\begin{align*}
 |g_k^2(x) |
&\le   \sum_{j\ge -4} 2^{-j}\sum_{n\ge -2}2^{-n} \int_0^1\int_0^1\int_0^1\intR 2^{j-k}|\grad\theta_{k-j}(x-ry)|\\
&\qquad  \times 2^{n-k} |\grad^2 u_{k-n}(x-sty)|   \rho_k(y)\, \dd y\dd r\dd s\dd t,
\end{align*}
where we have set $\rho_k(y) \coloneqq  4^k|y|^3 |\grad\psi_{k-1}(y)|$ and used the convention that negatively indexed quantities are zero.
Applying the Cauchy--Schwarz inequality in $k$ and using that $\sum_{l\geq 1}(j+l)^{-a}m(j+l)2^{-2(j+l)}\lesssim j^{-a}m(j)2^{-2j}$ and the fact that $m'(k)\sim k^{a-1}m(k)$, we then obtain 
\begin{align*}
 \sum_{k\ge 1}  m'(k) \int_{\T^d} \theta_k^{\ge} g_k^1(x)\dd x
 &\lesssim  \int_0^1\int_0^1\int_0^1\intR \int_{\T^d}\left(\sup_{k\ge 1} n(k) |\theta_k^{\ge}(x)|\right)\Big( \sum_{k\ge 1}k^{-a}m(k) 2^{-2k}|\grad\theta_{k}(x-ry)|^2\Big)^{\frac12}\\
&\qquad   \times\Big(\sum_{k\ge 1}  2^{-2k} |\grad^2 u_{k}(x-sty)|^2\Big)^{\frac12}   \rho_k(y)\, \dd y\dd r\dd s\dd t\dd x,
\end{align*}
where $n(k)$ was defined in the estimate of $\mathrm{I}_1$. An application of the H\"older-type inequality \cref{eq:hi}, the translation invariance in $x$ then gives
\begin{align*}
 \sum_{k\ge 1}  m'(k) \int_{\T^d} \theta_k^{\ge} g_k^1(x)\dd x
&\lesssim \big\|\sup_{k\ge 1} n(k)|\theta_k^{\ge}|\big\|_{L^2\log L(\T^d)} \Bigg\| \Big( \sum_{k \ge 1} k^{-a}m(k)  2^{-2k}|\nabla\theta_{\ell}|^2\Big)^{\frac12}\Big\|_{L^2\log L(\T^d)}\\
&\qquad  \times \Bigg\|\Big(\sum_{k\ge 1}  2^{-2k} |\grad^2 u_k|^2\Big)^{\frac12} \Bigg\|_{L^{\exp}(\T^d)} \|\rho_k\|_{L^1(\R^d)}.
\end{align*}
We finally notice that $\|\rho_k\|_{L^1(\R^d)} = \|\rho_1\|_{L^1(\R^d)} \sim 1$, and an application of assumption \cref{ass:v-2} and  \cref{lm:1} and \cref{lm:interp1} gives the same estimate as for $\mathrm{I}_1$, because $a\le 1/2$.

\underline{Step 4.} \emph{Gr\"onwall-type argument and conclusion.} In conclusion, putting the previous estimates together, we get 
\begin{align*}
\frac{1}{2}\frac{\text{d}}{\text{d}t}\norm{\theta(t,\cdot)}_{{\mathbf{B}}^{\exp, a,s}(\T^d)}^2 \lesssim \|w(t,\cdot)\|_{L^{\exp}(\T^d)} \left( \norm{\theta}_{{\mathbf{B}}^{\exp, a,s}(\T^d)}^2 + \|\theta\|_{L^{\infty}(\T^d)}^2\right).
\end{align*}
Applying Gr\"onwall's inequality and the norm equivalence in Lemma \ref{lm:besov-equiv}, we conclude the proof.

\end{proof}

\section{Construction of the counterexample}
\label{sec:proof-counterexample}

The strategy of the proof is to perturb a steady state solution to the Euler equations, namely the so-called \emph{Bahouri--Chemin vortex patch} (cf.  \cite{MR1288809}):
\begin{align}
&\bar \omega(x_1,x_2)\coloneqq \sgn{\sin(x_1)}\sgn{\sin(x_2)}\label{odd odd}.
\end{align}

From the Biot--Savart representation \(\bar u=\nabla^\perp\Delta^{-1}\bar \omega\) of the (divergence-free) velocity field $\bar u$ associated with $\bar \omega$, it follows that the first component $\bar u_{1}$ is even in $x_2$ and odd in $x_1$, while the second component $\bar u_2$ is even in $x_1$ and odd in $x_2$. We observe that $\bar \omega$ is bounded and belongs to $H^{\frac{1}{2}-\varepsilon}(\T^2)$, for every $\varepsilon >0$ (which can be checked by a direct computation using the Gagliardo representation of the $H^{\frac{1}{2}-\eps}$-seminorm).

In \cref{lemma u0}, we list several elementary decay and growth estimates for $\bar u$.

\begin{lemma}[Estimates on the velocity field $\bar u$]\label{lemma u0}
Let $x=(x_1,x_2)\in (0,\frac{1}{2})^2$ with $x_2\leq x_1$. Then the following estimates hold:
\begin{align}
 |\bar u_1(x)|&\lesssim |x_1|\log\frac{1}{x_1},\label{u1 est}\\
 |\bar u_2(x)|&\lesssim |x_2|\log\frac{1}{x_2},\label{u2 est}\\
  |\nabla \bar u(x)|& \lesssim \log\frac1{|x|} ,\label{bd grad}\\
 |\nabla \de_1 \bar u(x)|&\lesssim \frac{1}{|x|}.\label{bd nabla2}
\end{align}
Furthermore, there exists $\eps>0$ such that, if  $x_2\leq \eps x_1$, then  
\begin{align}
 \de_1 \bar u_1(x) \leq - C \log \frac1{x_1},\label{u1 est 2}
\end{align}
for some $C>0$.
\end{lemma}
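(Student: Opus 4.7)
The plan is to exploit the parity symmetries of $\bar\omega$ together with direct analysis of the Biot--Savart singular integrals. The five estimates will be handled in the order (3), (1)--(2), (4), (5). The starting point is that, since $\bar\omega$ is odd in each of $x_1$ and $x_2$, the Biot--Savart formula $\bar u = \nabla^\perp \Delta^{-1}\bar\omega$ forces $\bar u_1$ to be odd in $x_1$, even in $x_2$, and $\bar u_2$ to be odd in $x_2$, even in $x_1$. In particular, $\bar u_1(0,\cdot)\equiv 0$ and $\bar u_2(\cdot,0)\equiv 0$, and $\bar u(0)=0$.

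The estimate \cref{bd grad} is obtained from the representation $\nabla \bar u(x) = \mathrm{p.v.}\int K(x-y)\bar\omega(y)\,\dd y$, where $K$ is a Calder\'on--Zygmund kernel of order $|x-y|^{-2}$ on $\T^2$ (the gradient of the periodic Biot--Savart kernel). We split the integration into $A = \{|x-y|\le |x|/2\}$ and its complement: on $A$ the point $y$ remains in the same open quadrant as $x$, so $\bar\omega(y) \equiv 1$ is constant and the standard CZ cancellation of $K$ yields an $O(1)$ contribution; on $A^c$ the kernel is bounded by $|x|^{-2}$, and the integrals pick up logarithmic contributions from the discontinuities of $\bar\omega$ along the two coordinate axes, with the corner of the patch at the origin (which is at distance $|x|$) producing the dominant $\log(1/|x|)$ factor. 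Once \cref{bd grad} is established, the bounds \cref{u1 est} and \cref{u2 est} follow by Newton--Leibniz: from $\bar u_1(0,x_2)=0$ we get $\bar u_1(x) = \int_0^{x_1}\!\partial_1\bar u_1(s,x_2)\,\dd s$, so by \cref{bd grad} $|\bar u_1(x)|\lesssim \int_0^{x_1}\log(1/\sqrt{s^2+x_2^2})\,\dd s\lesssim x_1\log(1/x_1)$; the estimate for $\bar u_2$ is obtained similarly, observing that $|(x_1,t)|\ge x_1\ge x_2$ along the vertical segment, so $\log(1/|(x_1,t)|)\le\log(1/x_2)$.

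Estimate \cref{bd nabla2} is proved by the same splitting strategy applied to $\nabla\partial_1\bar u=\mathrm{p.v.}\int \widetilde K(x-y)\bar\omega(y)\,\dd y$, where $\widetilde K$ is of order $|x-y|^{-3}$: on $A$ the CZ cancellation gives a contribution of order $|x|^{-1}$ by scaling, and the far-field corner singularity at the origin contributes at most the same order. For the lower bound \cref{u1 est 2}, I compute $\partial_1\bar u_1 = \partial_1\partial_2\Delta^{-1}\bar\omega$ directly from the kernel $K_{12}(z)\sim -z_1 z_2/|z|^4$ (plus periodic corrections). Symmetrizing over the four quadrants using the parities of $\bar\omega$ produces an integral over the first quadrant against an explicit kernel of definite sign. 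In the regime $x_2\le\varepsilon x_1$, the dominant contribution comes from an annular region around the positive $x_1$-axis where an elementary log-divergent integral extracts a leading term of order $-\log(1/x_1)$, and, for $\varepsilon$ sufficiently small, every remainder (from periodization, truncation, and the off-axis correction) stays strictly below the leading coefficient.

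The principal obstacle is the accounting for \cref{u1 est 2}: unlike the upper bounds, which only need pointwise kernel estimates and geometric splitting, the lower bound requires isolating a term with a definite sign and bounding every correction by a small multiple of the same quantity $\log(1/x_1)$. The role of the smallness parameter $\varepsilon$ will be to guarantee that the off-axis perturbation (which a priori could flip signs) can be absorbed --- a continuity/limit argument as $x_2/x_1\to 0$ should pin down the sign cleanly and then persist for $x_2\le\varepsilon x_1$ by uniform estimates on the remainder.
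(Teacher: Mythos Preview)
Your approach for \cref{u1 est} and \cref{u2 est} via Newton--Leibniz from \cref{bd grad} is correct, though the paper does this more cheaply: it simply invokes the log-Lipschitz continuity of $\bar u$ (a standard consequence of $\bar\omega\in L^\infty$) together with the vanishing $\bar u_1(0,x_2)=\bar u_2(x_1,0)=0$.

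There is, however, a genuine gap in your argument for \cref{bd grad} and \cref{bd nabla2}. You split at radius $|x|/2$ and claim that on $A=\{|x-y|\le|x|/2\}$ the point $y$ stays in the open quadrant of $x$. This fails whenever $x_2<x_1/\sqrt{3}$: since $|x|\ge x_1$, the ball $B_{|x|/2}(x)$ then protrudes across $\{y_2=0\}$ into the fourth quadrant where $\bar\omega=-1$, so your cancellation step on $A$ breaks down. Shrinking the splitting radius to $x_2/2$ would keep $A$ inside the first quadrant, but then the far field on $A^c$ sees the jump of $\bar\omega$ across $\{y_2=0\}$ at distance $x_2$, and the naive absolute-value estimate yields $\log(1/x_2)$ (respectively $1/x_2$), which is strictly worse than the stated $\log(1/|x|)\sim\log(1/x_1)$ (respectively $1/|x|\sim 1/x_1$) when $x_2\ll x_1$. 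The paper resolves this with a different device: it integrates by parts in the $y_1$-variable only, converting the singular integral for $\partial_1\bar u$ into a line integral against the measure $\partial_{y_1}\bar\omega$ supported on $\{y_1=0\}$, which sits at distance $x_1$ from $x$ and gives $\log(1/x_1)$ directly; the $\partial_2$-derivatives of $\bar u$ then come for free from $\div\bar u=0$ and $\partial_1\bar u_2-\partial_2\bar u_1=\bar\omega=1$. The same integration by parts handles \cref{bd nabla2}. Your route could in principle be salvaged by exploiting parity cancellations of the individual kernel components, but this is not what you wrote.

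For \cref{u1 est 2} the paper bypasses your ``principal obstacle'' entirely: the case $x_2=0$ is quoted from Bahouri--Chemin, and the extension to $x_2\le\varepsilon x_1$ follows from \cref{bd nabla2} via
\[
|\partial_1\bar u_1(x)-\partial_1\bar u_1(x_1,0)|\le x_2\sup_{0<t<x_2}|\partial_2\partial_1\bar u_1(x_1,t)|\lesssim \frac{x_2}{x_1}\le\varepsilon,
\]
which for $\varepsilon$ small is absorbed by the leading term $-C\log(1/x_1)$. No explicit sign analysis of the symmetrized four-quadrant integral is needed.
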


\begin{proof}[Proof of \cref{lemma u0}]
The estimates \cref{u1 est} and \cref{u2 est} on $\bar u_1$ and $\bar u_2$ are a  consequence of the log-Lipschitz continuity and of $\bar u_{1/2}$ being odd in $x_{1/2}$ (which, in turn, implies 
 $\bar u_1(0,x_2)=0=\bar u_2(x_1,0)$).

For the estimate \cref{u1 est 2}, it suffices to show the estimate \cref{bd nabla2} on the second derivative, as the case $x_2=0$ is already contained in \cite[Proposition 2.1]{MR1288809} up to minor modifications.

For the other estimates on the derivatives, we use that the Biot--Savart law can be written as a convolution in the sense that \begin{align}
 \nabla \bar u(x)= \mathrm{P.V.}\,\int_{[-\pi,\pi]^2}\nabla_x\frac{(x-y)^\perp}{|x-y|^2}\bar \omega(y)\dd y+\sum_{q\in2\pi  \Z_{\neq 0}^d}\int_{[-\pi,\pi]^2}\nabla_x\frac{(x+q-y)^\perp}{|x+q-y|^2}\bar \omega(y)\dd y,\label{sum est}
\end{align}
for any mean-free vorticity $\bar \omega$ on the torus. Here the infinite sum converges because the integrals can be estimated as $\lesssim |q|^{-3}\norm{\bar \omega}_{L^\infty(\T^2)}$ due to the mean-freeness of the vorticity. Similarly, we can also estimate the far-away contributions to $\nabla \de_1 \bar u$ with $\norm{\omega}_{L^\infty(\T^2)}$.
In particular, we only need to estimate the singular integral for the remaining estimates.

Arguing with a smooth approximation, we can deduce  
\begin{align*}
 &\mathrm{P.V.}\,\int_{[-\pi,\pi]^2}\partial_{x_1}\frac{(x-y)^\perp}{|x-y|^2}\bar \omega(y)\dd y\\
 &\quad=\int_{[-\pi,\pi]^2}\frac{(x-y)^\perp}{|x-y|^2}\dd \partial_{y_1}\bar  \omega(y)- \int_{\partial [-\pi,\pi]^2}e_1\cdot n(y)\frac{(x-y)^\perp}{|x-y|^2} \bar \omega(y)\dd y,
\end{align*}
where $n$ is the outer normal and $ \partial_{y_1} \bar \omega$ is the distributional derivative of $\bar \omega$, which is the Hausdorff measure $\mathcal{H}^1$ restricted to $\{y_1=0\}$ times the sign depending on the direction of the jump. It thus holds that
\begin{align}
\left|\int_{[-\pi,\pi]^2}\frac{(x-y)^\perp}{|x-y|^2}\dd \partial_{y_1}\bar  \omega(y)
\right|\leq  \int_{\{y_1=0\}}\frac{1}{|x-y|}\dd y\lesssim |\log x_1|.
\end{align}
Because $|x-y|\ge\pi/2$ on the boundary integral, the latter is a term of order $\|\bar \omega\|_{L^{\infty}}=1.$
We thus obtain the desired bound on the $\de_1$-derivative in \cref{bd grad}. As $\de_1\bar u_{1}+\de_2 \bar u_{2}=0$ and $\de_1 \bar u_{2}-\de_2 \bar u_{1}=1$ this also implies the bounds on the $\de_2$-derivative in \cref{bd grad}. 

We may argue similarly for the second-order derivative in \cref{bd nabla2}, which leads to considering
\begin{align}
\left|\int_{\T^2}\nabla_x\frac{(x-y)^\perp}{|x-y|^2}\dd \de_{y_1
}\bar \omega(y) \right|  \le \int_{\{y_1=0\}}\frac{1}{|x-y|^2}\dd y\lesssim x_1^{-1},
\end{align}
and implies \cref{bd nabla2}.
\end{proof}

We now observe that a small perturbation of $\bar \omega$ does not disturb the corresponding velocity field too much.  

\begin{lemma}[Bounds on the velocity field associated with $\omega$] \label{L3}
Let $\omega_*$ be odd in $x_1$ and $x_2$ and suppose  $\norm{\omega_*}_{L^\infty(\T^2)} < \infty$. Then the velocity $u=\nabla^{\perp}\Delta^{-1}\omega$ associated with the solution to the vorticity equation with initial datum $\bar \omega+\omega_*$ satisfies the bounds \begin{align}
&|u_1(t,x)|\lesssim x_1\log\frac{1}{x_1}\label{est u1}\\
&|u_2(t,x)|\lesssim x_2\log \frac{1}{x_2}\label{est u2}
\end{align} globally in time for $x_1,x_2\in (0,\frac{\pi}{4})$ and is log-Lipschitz with uniformly bounded constant.
\end{lemma}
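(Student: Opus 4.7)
The plan is to exploit the odd-odd symmetry of the vorticity, which is preserved by the Euler flow, and then to deduce the desired bounds on $u_1,u_2$ from the global log-Lipschitz regularity of the velocity.

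\textbf{Step 1: Symmetry preservation.} First I would observe that $\bar\omega + \omega_*$ is odd in $x_1$ and odd in $x_2$, since $\bar\omega$ is odd-odd by construction (see \cref{odd odd}) and $\omega_*$ is assumed to be odd-odd. Yudovich's uniqueness theorem then implies that this symmetry is preserved by the 2D Euler flow: if $\omega(t,x_1,x_2)$ solves \cref{eq:vorticity} with initial datum $\bar\omega+\omega_*$, then so does $-\omega(t,-x_1,x_2)$ (and likewise for the $x_2$-reflection), because the advecting velocity transforms accordingly. By uniqueness in the Yudovich class, $\omega(t,\cdot)$ is odd in both variables for all $t>0$. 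From the Biot--Savart representation \cref{eq:bs-torus} (or directly from $u=\nabla^{\perp}\Delta^{-1}\omega$), this implies that $u_1(t,\cdot)$ is odd in $x_1$ (and even in $x_2$), while $u_2(t,\cdot)$ is odd in $x_2$ (and even in $x_1$). In particular,
\begin{equation*}
u_1(t,0,x_2)=0,\qquad u_2(t,x_1,0)=0,\qquad\text{for all } t>0.
\end{equation*}

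\textbf{Step 2: Uniform log-Lipschitz regularity.} Next I would note that $\|\omega(t,\cdot)\|_{L^\infty(\T^2)}=\|\bar\omega+\omega_*\|_{L^\infty(\T^2)}\le 1+\|\omega_*\|_{L^\infty(\T^2)}$ is conserved by the Euler flow. Since the Yudovich log-Lipschitz constant $L$ in \cref{eq:log-lip} depends on $\omega_0$ only through $\|\omega_0\|_{L^\infty(\T^2)}$, the velocity $u(t,\cdot)$ is log-Lipschitz with a constant that is uniform in $t$. This already gives the last assertion of the lemma.

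\textbf{Step 3: From log-Lipschitz to the bounds \cref{est u1}, \cref{est u2}.} Finally, combining Steps 1 and 2, for $x_1,x_2\in(0,\pi/4)$ I would apply the log-Lipschitz estimate between $(x_1,x_2)$ and $(0,x_2)$:
\begin{equation*}
|u_1(t,x_1,x_2)|=|u_1(t,x_1,x_2)-u_1(t,0,x_2)|\le L\,x_1\Bigl(1+\log\tfrac{4\pi}{x_1}\Bigr)\lesssim x_1\log\tfrac{1}{x_1},
\end{equation*}
and analogously for $u_2$ by comparing with $u_2(t,x_1,0)=0$. This yields \cref{est u1} and \cref{est u2} with an implicit constant that is independent of $t$.

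There is essentially no hard step here: the only point that deserves care is the symmetry preservation in Step 1, which relies on Yudovich uniqueness applied to the reflected solution (the vector field $u$ transforms as $(u_1,u_2)(t,x_1,x_2)\mapsto(-u_1,u_2)(t,-x_1,x_2)$ under $x_1\mapsto-x_1$, and the equation is invariant under this joint transformation, so the uniqueness of bounded-vorticity solutions forces $\omega$ to inherit the odd-odd symmetry of the initial datum). Everything else is a direct consequence of Yudovich's log-Lipschitz estimate \cref{eq:log-lip} and the conservation of $\|\omega(t,\cdot)\|_{L^\infty(\T^2)}$.
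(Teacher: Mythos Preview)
Your proposal is correct and follows essentially the same approach as the paper: preserve the odd-odd symmetry (hence $u_1(t,0,x_2)=u_2(t,x_1,0)=0$), use conservation of $\|\omega\|_{L^\infty}$ to get a time-uniform log-Lipschitz bound, and combine the two. The paper's proof is simply a terser version of what you wrote.
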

 
\begin{proof}
We first note that the vorticity equation preserves the oddness in the $x_1$ and $x_2$ variables. In particular, this means that at any time $t$, we must have \begin{align}\label{36}
u_2(t,x_1,0)=u_1(t,0,x_2)=0.
\end{align}
As the $L^\infty$-norm of $\omega$ is preserved, this gives a global bound on the log-Lipschitz norm, which implies the statement. 
\end{proof}

\subsection{Construction of the perturbation}

To prove \cref{th:sharp}, it is pivotal to choose the perturbation $\omega_\ast$ in a suitable way. The key idea (at the linear level) is that we consider a linear combination of ``blobs'' centered around $x_1=2^{-2n}$. Due to the structure of the flow, these blobs will be compressed by a factor $e^{Ctn}$ in the $x_1$-direction and will be stretched by the same factor in the $x_2$-direction, leading to an increase of the norm by a corresponding factor and, if set up with appropriate coefficients, also to a loss of regularity.

The difficulty is to keep the singular hyperbolic nature of the flow stable under perturbation at the non-linear level. Controlling just the velocity $u$ itself is quite easy if the perturbation is small and has the same symmetry (see  \cref{L3}).

For technical reasons, squeezing by a factor $n$ alone is not actually enough for an increase of the norm, for instance, we have to exclude the scenario that the flow rotates a lot and the stretching in the $x_2$- and the compression in the $x_1$-direction cancel each other out. Therefore we will also need some control on the gradient of the velocity. This will be achieved in \cref{L10} by suitably controlling the location of the ``blobs'' and by making them extremely small in amplitude.

Due to the symmetry, we only need to consider the first quadrant $(0,\pi)^2$ of the flat torus $\T^2$ and define $\omega_*$ in the other quadrants by symmetry.
We take \begin{align}
Q_n&\coloneqq(2^{-2n},3\cdot 2^{-2n})\times (2^{-2Dn},3\cdot 2^{-2Dn}),\\
R_n &\coloneqq (3\cdot 2^{-2n-1}, 2^{-2n+1})\times (3\cdot 2^{-2Dn-1}, 2^{-2Dn+1}),
\end{align}
where $D\ge 1$ is some large but fixed constant. It holds that $R_n\subset \subset  Q_n$, the larger rectangles are pairwise disjoint,  and any $Q_n$ has a distance larger than $  2^{-2n-4}$ from all other rectangles, except for the reflection of $Q_n$ on the $x_1$-axis, from which it has distance $\sim 2^{-2Dn}$. We denote by $Q$ the union of all these $Q_n$'s. We furthermore let $\zeta_n(x_1,x_2) = \zeta_0(2^{2n}x_1,2^{2Dn}x_2)$ denote a smooth cut-off function with values in $[0,1]$, that is supported on the rectangle $Q_n$ and equals $1$ in the interior rectangles $R_n$. We then introduce an oscillating function on each rectangle,
 \begin{align}
\omega_n(x)\coloneqq  h_n \zeta_n(x)\sin(d_n x_1),
\end{align} 
for suitable sequences $\{h_n\}_{n \in \N}$ and $\{d_n\}_{n \in \N}$, 
and glue these pieces together by setting 
\begin{align}
\omega_*=\sum_{n\geq n_*}\omega_n,
\end{align}
for some $n_*\in\N$. The functions $\omega_n$ are smooth and supported in $Q_n$.

Our first goal is to select the amplitudes $h_n$   and the frequencies $d_n$ to suit our regularity setting. This is done in the following lemma.

\begin{lemma}[Choice of amplitudes and frequencies] \label{lm:hndn}
Let $a\in(0,1]$ be given and let $s<\frac{1}{2}$ if $a=1$ and suppose that $h_{n+1}<h_n <1<d_n<d_{n+1}$ are such that
\begin{align}
        \label{22}
    d_n &\gg 4^{Dn},\\
        \label{23}
    h_n \log_2(d_n) \exp(s\log_2^a d_n) \sqrt{|Q_n|} &\le \frac1{n},\\
    h_n&\le \frac1{n} \label{24}.
\end{align}
Then $\omega_*\in X_{a,s,1}$, which in particular implies that $\omega_*\in B^{\exp,a,s}(\T^2)$ by \cref{pointwise desc} above.
\end{lemma}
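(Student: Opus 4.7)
The plan is to exhibit a candidate function $g$ for the pointwise bound \cref{def g} of $X_{a,s,1}(\T^2)$, verify its $L^2$-integrability via \cref{23}, and then check the pointwise inequality case by case. Specifically, I would set
\[
g(x) \coloneqq \sum_{n\ge n_*} h_n \log_2(d_n) \exp\bigl(s\log_2^a d_n\bigr)\, \mathbf{1}_{\bar Q_n}(x),
\]
extended with the odd-odd symmetries of $\omega_*$ to all of $\T^2$. Since the closures $\bar Q_n$ and their reflections across the coordinate axes are pairwise disjoint (with quantitative separations inherited from the construction), the $L^2$-contributions add in square, yielding
\[
\|g\|_{L^2(\T^2)}^2 \lesssim \sum_{n\ge n_*} h_n^2\log_2^2(d_n)\exp\bigl(2s\log_2^a d_n\bigr)\, |Q_n| \le \sum_{n\ge n_*}\frac{1}{n^2} < \infty
\]
by \cref{23}.

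For the pointwise bound I fix $x,y\in\T^2$, set $r\coloneqq|x-y|$ and $\ell\coloneqq\log_2(1/r)$. Because the $\omega_n$ have mutually disjoint supports, there are only three interesting configurations: (i) both $x,y\in\bar Q_n$; (ii) $x\in\bar Q_n$ but $y\notin Q_n$; and (iii) $x\in Q_n$, $y\in Q_m$ with $n\ne m$, which I handle by the triangle inequality $|\omega_*(x)-\omega_*(y)|\le|\omega_n(x)|+|\omega_m(y)|$ and treating each term as in (ii). Case (ii) itself reduces to (i): since $\omega_n$ vanishes on $\partial Q_n$, I pick a boundary point $z\in\partial Q_n\cap[x,y]$, so $|\omega_n(x)-\omega_n(y)|=|\omega_n(x)-\omega_n(z)|$ with $|x-z|\le r$, and both prefactors in \cref{def g} only get smaller when $r$ is replaced by $|x-z|$. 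The nearest reflected copy of $Q_n$ (across the $x_1$-axis) sits at distance $\gtrsim 2^{-2Dn}$, so the corresponding $\ell\lesssim 2Dn\ll\log_2 d_n$ by \cref{22}, placing this configuration in the easy regime below.

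It remains to establish, for $x,y\in\bar Q_n$, that
\[
|\omega_n(x)-\omega_n(y)|\ \lesssim\ \frac{\log_2 d_n}{\ell}\,\exp\bigl(s\log_2^a d_n-s\ell^a\bigr)\,h_n.
\]
From $\omega_n=h_n\zeta_n\sin(d_n x_1)$ and the scaling estimate $\|\nabla\zeta_n\|_{L^\infty}\lesssim 4^{Dn}\le d_n$ (by \cref{22}), one has $|\nabla\omega_n|\lesssim h_n d_n$ and hence $|\omega_n(x)-\omega_n(y)|\lesssim\min(h_n d_n r,h_n)$. Setting $L\coloneqq\log_2 d_n$, $\tau\coloneqq\ell-L$, $\tau^+\coloneqq\max(\tau,0)$, the claim becomes
\[
\min(2^{-\tau},1)\ \lesssim\ \frac{L}{L+\tau^+}\exp\bigl(s(L^a-(L+\tau^+)^a)\bigr).
\]
For $\tau\le 0$ both right-hand factors are $\ge 1$ and the left side equals $1$, so the bound is immediate. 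For $\tau>0$, subadditivity $(L+\tau)^a-L^a\le\tau^a$ reduces the claim to $2^{-\tau}(L+\tau)\exp(s\tau^a)\lesssim L$: when $a<1$, the map $\tau\mapsto 2^{-\tau}\exp(s\tau^a)$ is bounded on $[0,\infty)$ and $2^{-\tau}\tau$ decays, so the product is $\lesssim L+1\lesssim L$; when $a=1$, the assumption $s<1/2<\log 2$ gives $2^{-\tau}e^{s\tau}=e^{-(\log 2-s)\tau}$, and hence $(L+\tau)e^{-(\log 2-s)\tau}\le L+\tau e^{-(\log 2-s)\tau}\lesssim L$, where $\tau e^{-(\log 2-s)\tau}$ is bounded uniformly in $\tau$. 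The main subtlety lies precisely in this borderline regime $r\sim 1/d_n$ at the critical endpoint $a=1$---this is why $s<1/2$ is imposed there. Once the pointwise estimate is in hand, $\omega_*\in B^{\exp,a,s}(\T^2)$ follows from \cref{pointwise desc}.
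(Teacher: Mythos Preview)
Your argument is correct and follows the same broad outline as the paper (define a candidate $g$, control $\|g\|_{L^2}$ via \cref{23}, and verify the pointwise inequality by cases), but the key technical step is handled differently. The paper extends $g$ beyond the rectangles, setting $g(y)=h_n\Phi(1/\dist(y,Q_n))$ on a halo of width $\sim 2^{-2n}$ around each $Q_n$; the case $x\in Q_n$, $y$ just outside is then absorbed into $g(y)$ via the monotonicity $\Phi(1/|x-y|)\le\Phi(1/\dist(y,Q_n))$. This forces the paper to invoke hypothesis \cref{24} and the near-boundary integrability of $\Phi(1/\dist(\cdot,Q_n))^2$ (which is where the restriction $s<\tfrac12$ for $a=1$ enters their proof). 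You instead keep $g$ supported on $\bigcup\bar Q_n$ and exploit that $\omega_n$ vanishes on $\partial Q_n$: replacing $y$ by the boundary crossing $z\in\partial Q_n\cap[x,y]$ reduces the exterior case to the interior one with all weight on $g(x)$. This is cleaner---it dispenses with hypothesis \cref{24} entirely and uses $s<\tfrac12$ only to make $2^{-\tau}e^{s\tau}$ decay in your scalar inequality. The paper's route, on the other hand, avoids your explicit scalar computation by appealing to the single sublinear-growth property $\rho_1/\Phi(\rho_1)\le\rho_2/\Phi(\rho_2)$, which packages the two regimes $|x-y|d_n\lessgtr1$ uniformly.
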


\begin{remark}[Optimal determination of the amplitude]
   The conditions \crefrange{22}{24} that arise in the proof of \cref{lm:hndn} motivate the following maximal choice of the amplitude
   \begin{equation}
       \label{300}
       h_n\coloneqq \frac1n\min\left\{1,\frac1{\log_2(d_n) \exp(s\log_2^a d_n) \sqrt{|Q_n|}}\right\}.
   \end{equation}
   We will later see that the frequencies $d_n$ necessarily grow like $2^{(n^\delta)}$ with $\delta a>1$, so that this choice of amplitude saturates \eqref{23} for large $n$.
\end{remark}

\begin{proof}[Proof of \cref{lm:hndn}]
To shorten the notation in the following, we set $\Phi(\rho) \coloneqq \log_2\left(1+\rho\right)\exp\left(s  \log^a_2{\rho}\right)$ and we note that this function satisfies the sublinear growth formula,
\begin{equation}\label{21}
\frac{\rho_1}{\Phi(\rho_1)}\le \frac{ \rho_2}{\Phi(\rho_2)}\quad\mbox{for any }1\ll \rho_1\le \rho_2.
\end{equation}

We start by considering the case where $x,y\in Q_n $   with $|x-y|d_n \le 1$. In this case,   we use the Lipschitz continuity of $\omega_n$ on $Q_n$ and the growth formula \cref{21} with $d_n\gg1$ to show
\[
|\omega_*(x)-\omega_*(y)|  = |\omega_n(x)-\omega_n(y)|\lesssim h_n d_n |x-y| \le h_n \Phi(d_n) \Phi\left(\frac1{|x-y|}\right)^{-1}.
\]
Similarly, we obtain for $|x-y|d_n \geq 1$,  using the monotonicity of $\Phi$ itself,
\[
|\omega_*(x)-\omega_*(y)|\le |\omega_n(x)| + |\omega_n(y)|   \lesssim h_n \le h_n \Phi(d_n) \Phi\left(\frac1{|x-y|}\right)^{-1}.
\]
Likewise, if $x\in Q_n$ and $y\in Q_m$ with $m>n$, we have that $|x-y|\gtrsim 2^{-2n-4}\gtrsim 2^{-2m-4}$, and thus, thanks to \cref{22}, we can argue as before
\[
|\omega_*(x)-\omega_*(y)|\le   h_n +h_m \le \left(h_n \Phi(d_n)+h_m \Phi(d_m)\right) \Phi\left(\frac1{|x-y|}\right)^{-1}.
\]

All these bounds motivate the choice $g = h_n \Phi(d_n)$ on $Q_n$ in \cref{def g}.  

Now, if $x\in Q_n$ and $y\not \in Q$ such that $|x-y|\ge2^{-2n-5}$, the same argument as before applies, and there is no contribution to $g(y)$. 
In the case when $x$ and $y$ are in different quadrants the same argument still works as the pairwise distance is always $\gtrsim d_n,d_m$.

Finally, in the remaining case  where $x\in Q_n$ and $y\not \in Q$ are such that $|x-y|\le 2^{-2n-5}$, we write with the help of the monotonicity of $\Phi$ that
\[
|\omega_*(x)-\omega_*(y)|  \le h_n \le h_n \Phi\left(\frac1{\dist(y,Q_n)}\right)\Phi\left(\frac1{|x-y|}\right)^{-1}.
\]
We thus choose 
\[
g(y) \coloneqq \begin{cases} h_n \Phi(d_n) &\quad\mbox{for }y\in Q_n,\\
 h_n \Phi\left(\frac1{\dist(y,Q_n)}\right) &\quad \mbox{for }y\not\in Q\mbox{ such that }\dist(y,Q_n)\le 2^{-2n-5},\\
0&\quad\mbox{elsewhere}.
\end{cases}
\]
Notice that $g$ is well-defined because the distance of $Q_n$ to its neighbor rectangles is larger than $2^{-2n-4}$.
It remains to evaluate the $L^2$ norm of $g$. First on $Q$, we have that
\begin{align*}
\|g\|_{L^2(Q)}^2 \le \sum_{n\ge 1} h_n^2 \Phi(d_n)^2 |Q_n| &\lesssim \sum_{n\ge 1} \left( h_n \log_2(d_n) \exp(s \log_2^a d_n) \sqrt{|Q_n|}\right)^2 \\ &\le\sum_{n\ge 1}  \frac1{n^2}<+\infty,
\end{align*}
by our choice of $h_n$ in \cref{23}. On the complement, we have that
\[
\|g\|_{L^2(Q^c)}^2  \le \sum_{n\ge 1} h_n^2 \int_{\dist(y,Q_n)\le 2^{-2n-5}} \Phi\left(\frac1{\dist(y,Q_n)}\right)^2\dd y.
\]
Because the integrand is growing towards the boundary of $Q_n$ slower than $\dist(y,Q_n)^{1-\gamma}$ with $\gamma >0$ (here we are using that if $a=1$ then $s<\frac{1}{2}$), the integrals are bounded uniformly in $n$, and the convergence of the sum then follows by \cref{24}.
\end{proof}

\subsection{A priori estimates}
\label{ssec:ex-estimates}

To show a loss of regularity, we will need some a priori estimates on the vorticity $\omega(t,\cdot)$ obtained from the initial datum $\bar \omega + \omega_*$ and the corresponding velocity field $u=\nabla^{\perp}\Delta^{-1}\omega$. Let $X$ be the flow generated by $u$, i.e., 
\[
\begin{cases}
\partial_t X(t,x)  =u(t,X(t,x)), & t >0, \ x \in\T^2, \\
X(0,x)=x, & x \in \T^2.
\end{cases}
\]
Because of  symmetry, the trajectories do not cross the axes and thus the  flow leaves the Bahouri--Chemin vortex patch invariant during its evolution:  
\[
\omega(t,x)=\bar \omega(x)+\omega_*\circ (X(t,\cdot))^{-1}(x), \qquad t >0, \ x \in \T^2,
\]
and then 
\[
u(t,x) = \bar u(x) + u_\ast(t,x), \qquad t >0, \ x \in \T^2.
\]
In what follows, we will also use the shorter notation 
\[
\omega_*^t \coloneqq \omega_*\circ (X(t,\cdot))^{-1} \quad\text{and}\quad u^t_\ast \coloneqq \nabla^\perp \Delta^{-1} \omega_\ast^t.
\]

As a consequence of the bounds \cref{est u1} and \cref{est u2} in \cref{L3}, 
\[
X_1(t,x) \ge x_1^{\exp(Ct)},\quad X_2(t,x) \le x_2^{\exp(-Ct)},
\]
for any $x_1,x_2 \in (0,\pi/4)$, where $C>0$ is the implicit constant in \cref{est u1} and \cref{est u2}. Using the convexity of the exponential function and restricting to times $Ct\le 1$, we observe that $\exp(Ct)  \le1+ Ct \exp(Ct)\le 1+Cet$ and $\exp(-Ct)\ge 1-Ct$, and thus 
\[
X_1(t,x) \ge x_1^{1+Cte},\quad X_2(t,x) \le x_2^{1-Ct}.
\]
For $x\in Q_n$ with $x_1,x_2\in(0,\pi/4)$ and $Ct\le 1$, we then deduce that
\begin{align}
X_1(t,x)\gtrsim 2^{-(1+Cet)2n},\label{est x1}
\end{align} and \begin{align}
X_2(t,x)\lesssim  2^{-2(1-Ct)Dn}.\label{est x2}
\end{align}

Let $Q_n^t$ be the image of $Q_n$ under the flow at time $t$, that is, $Q_n^t \coloneqq X(t,Q_n)$. Due to the log-Lipschitz regularity \cref{eq:log-lip}, we can apply \cref{L11} and deduce
\[|X(t,x)-X(t,y)|\ge |x-y|^{\exp(Ct)}\] for any two sufficiently close points $x$ and $y$. Recalling that each $Q_n$ has an initial distance of the order $\gtrsim 2^{-2n}$ from its neighbors, the evolving cell $Q_n^t$  has a distance of the order $\gtrsim 2^{-2en}$ from its neighbors for $Ct\le1$ . Furthermore, if $n \ge n_*$ (chosen sufficiently large, which we will do from now on), then $Q_n^t$ has order $1$ distance from the boundary $\partial [-\pi,\pi]^2$.

\begin{lemma}[Uniform local estimate on the velocity gradient] \label{L10}
Let us suppose that $h_n$ is given by \eqref{300} and suppose that 
\begin{equation}
    \label{301}
    d_n = 2^{\delta(n)},
\end{equation}
for some monotone function $\delta(n)$ satisfying $\delta(n)^a/n\to \infty$ for $n\to \infty$ and $\delta(n)\le e^n$ for any $n\ge n_*$ sufficiently large.
Then, if $D$ is sufficiently large, we have 
\begin{align}\|\nabla u_*(t,\cdot)\|_{L^{\infty}(Q_n^t)}\lesssim 1\label{bd grad 2}.
\end{align}
In particular, 
\begin{equation}
    \label{34}
    \|\nabla u(t,\cdot)\|_{L^{\infty}(Q_n^t)}\lesssim n,
\end{equation}
and 
\begin{equation}
    \label{35}
    \de_1 u_1 \lesssim -n.
    \end{equation} 

Moreover, we have 
\begin{align}
\|\de_1 u_2(t,\cdot)\|_{L^{\infty}(Q_n^t)}\lesssim 2^{-\frac{D}{3}n}.\label{bd de12}
\end{align}
\end{lemma}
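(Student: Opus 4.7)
The plan is to first establish the uniform bound \eqref{bd grad 2} on $\nabla u_*$, from which \eqref{34} and \eqref{35} follow by adding the Bahouri--Chemin contributions, and then to handle \eqref{bd de12} separately by a symmetry argument. For \eqref{bd grad 2}, I would apply \cref{lem log holder} to write $\norm{\nabla u_*(t,\cdot)}_{L^\infty(\T^2)} = \norm{\nabla^2 \Delta^{-1}\omega_*^t}_{L^\infty(\T^2)} \lesssim \norm{\omega_*^t}_{C^{\log^2}(\T^2)}$, then invoke \cref{L11} to reduce this to a bound on $\norm{\omega_*}_{C^{\log^2}(\T^2)}$ (losing only a harmless factor $e^{2Ct}$ for $Ct\le 1$). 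Since the supports $Q_n$ are pairwise separated with gaps of order $\gtrsim 2^{-2\min(n,m)}$, and each $\omega_n$ is a smooth oscillating bump of amplitude $\le h_n$ with Lipschitz constant $\lesssim h_n d_n$, a case analysis on the increments $\omega_*(x+h)-\omega_*(x)$ (splitting according to whether $x$, $x+h$ lie in the same rectangle, in two different rectangles, or one inside and one outside the support) yields a bound of the form $[\omega_*]_{C^{\log^2}(\T^2)} \lesssim \sup_{n\geq n_*}\max\{n^2 h_n,\ (\log d_n)^2 h_n\}$. The saturated choice \eqref{300} combined with the hypothesis $\delta(n)^a/n \to \infty$ makes both quantities super-polynomially small as $n\to\infty$, so the supremum is finite for $n_*$ large enough and \eqref{bd grad 2} follows.

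Estimate \eqref{34} is then immediate: for $y \in Q_n^t$, \eqref{est x1} gives $|y|\gtrsim y_1 \gtrsim 2^{-2en}$, so \eqref{bd grad} produces $|\nabla\bar u(y)|\lesssim \log(1/|y|)\lesssim n$, while \eqref{bd grad 2} contributes only $O(1)$. Estimate \eqref{35} follows in the same spirit: for $D$ sufficiently large, the ratio $y_2/y_1 \lesssim 2^{2en-2(1-Ct)Dn}$ on $Q_n^t$ falls below the threshold $\eps$ of \eqref{u1 est 2} (for $Ct$ not too close to $1$), so \eqref{u1 est 2} gives $\partial_1 \bar u_1(y) \le -C\log(1/y_1) \lesssim -Cn$, which absorbs the uniformly bounded perturbative contribution $\partial_1 u_{*,1}$.

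For the more delicate estimate \eqref{bd de12}, the key observation is that $\omega_*$ is odd in $x_2$ (and this symmetry is preserved by the Euler flow, as is that of $\bar\omega$), so both $\bar u_2$ and $u_{*,2}$ are odd in $x_2$. In particular $\partial_1 u_2(y_1,0)=0$ and, by the fundamental theorem of calculus together with divergence-freeness,
\[
|\partial_1 u_2(y_1,y_2)| \le |y_2|\sup_{|s|\le|y_2|}|\partial_2\partial_1 u_2(y_1,s)| = |y_2|\sup_{|s|\le|y_2|}|\partial_1^2 u_1(y_1,s)|.
\]
The Bahouri--Chemin piece is controlled using \eqref{bd nabla2}: $|\partial_1^2\bar u_1(y_1,s)|\lesssim 1/y_1\lesssim 2^{2en}$, which multiplied by $|y_2|\lesssim 2^{-2(1-Ct)Dn}$ (from \eqref{est x2}) yields $\lesssim 2^{-Dn/3}$ for $D$ large enough. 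For the perturbative piece $\partial_1^2 u_{*,1}$ I would return to the Biot--Savart representation and use the oddness of $\omega_*^t$ in $z_2$ to symmetrize the kernel $K_1(y-z)-K_1(y-\tilde z)$ (with $\tilde z = (z_1,-z_2)$), which produces an explicit $z_2$ factor. Combined with the separation of the rectangles $Q_m^t$ and the smallness of $h_m$, this yields the claimed bound for the far-field contributions; the near-field contribution from $\omega_n^t$ itself is where care is needed.

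The main obstacle I anticipate is exactly this near-field self-interaction in \eqref{bd de12}: while the far-field contributions from $Q_m^t$ with $m\neq n$ are easily summable via the gaps $\gtrsim 2^{-2e\min(m,n)}$ and the prefactor $z_2\lesssim 2^{-2(1-Ct)Dm}$, the near-field involves the super-singular kernel $\partial_1^2\partial_2 G$ of homogeneity degree $-3$, for which $L^p$-theory and Schauder estimates are insufficient. To close this piece one must additionally exploit the high-frequency oscillation $\sin(d_n x_1)$ in $\omega_n$, which, via the condition $d_n \gg 4^{Dn}$ in \eqref{22} together with integration by parts against the phase, provides enough decay in $d_n$ to beat the singularity and complete the estimate.
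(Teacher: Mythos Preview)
Your treatment of \eqref{bd grad 2}, \eqref{34}, and \eqref{35} is correct and in one respect cleaner than the paper's: by bounding $\|\omega_*\|_{C^{\log^2}(\T^2)}$ in a single case analysis and then invoking \cref{L11} and \cref{lem log holder}, you obtain a \emph{global} estimate $\|\nabla u_*^t\|_{L^\infty(\T^2)}\lesssim 1$, whereas the paper localizes to $Q_n^t$, uses the $C^{\log^2}$ argument only for the diagonal blob $\omega_n$, and handles the off-diagonal pieces $m\neq n$ by separate Biot--Savart bounds exploiting $\dist(Q_n^t,Q_m^t)$.

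The genuine gap is in your near-field (diagonal, $m=n$) contribution to \eqref{bd de12}. By passing from $\partial_1 u_{*,2}$ to $|y_2|\sup_{|s|\le|y_2|}|\partial_1^2 u_{*,1}(y_1,s)|$ for the \emph{whole} perturbation, you are forced to control a hypersingular integral (kernel $\partial_1^2\partial_2 G$ of homogeneity $-3<-d$) at points $(y_1,s)$ that lie \emph{inside} $Q_n^t$; no amount of $L^\infty$ or $C^{\log^2}$ control on $\omega_n^t$ produces a pointwise bound there. Your proposed rescue via integration by parts against $\sin(d_n x_1)$ does not work at positive times: the transported vorticity $\omega_n^t=\omega_n\circ(X^t)^{-1}$ no longer carries a clean phase in the integration variable, so IBP drags in $\nabla X^t$ and replaces the degree $-3$ kernel by one of degree $-4$, worsening the singularity rather than curing it. The fix is simply not to differentiate the diagonal piece at all: split $u_*=u_n+\sum_{m\ne n}u_m$ \emph{before} applying the fundamental theorem, and for $m=n$ use directly
\[
|\partial_1 u_{n,2}^t|\le\|\nabla u_n^t\|_{L^\infty(\T^2)}\lesssim\|\omega_n^t\|_{C^{\log^2}(\T^2)}\lesssim h_n(\log d_n)^2,
\]
which under the saturated choice \eqref{300} together with $\delta(n)^a/n\to\infty$ and $\delta(n)\le e^n$ is already $\lesssim 2^{-Dn/3}$. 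Reserve the second-derivative argument for $m\neq n$, where the strip $S_n^t=\{(y_1,\lambda y_2):(y_1,y_2)\in Q_n^t,\ \lambda\in(0,1)\}$ is genuinely separated from $\supp\omega_m^t$ and one has the honest bound $\|\nabla^2 u_m^t\|_{L^\infty(S_n^t)}\lesssim\dist(S_n^t,Q_m^t)^{-3}\|\omega_m^t\|_{L^1}$.
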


\begin{proof} 
We start by noticing that the assumptions on $\delta(n)$ entail    \cref{22} and guarantee, in addition, that
\begin{align}
    \label{31}
 h_n&\le 2^{-5en  - n},\\
 h_n \log_2^2 d_n &\le1,
\end{align}
for any $n\ge n_*$.
We split the velocity into the stationary part $\bar u$ and the perturbation $u_*^t$. We decompose $u_* = \sum_{m\ge n_*} u_m^t$ and estimate the $L^\infty$-norm of the gradient on $Q_n^t$. We deal with the diagonal part $m=n$ and the off-diagonal parts $m\not=n$ separately.

\underline{Step 1.} \emph{Proof of \cref{bd grad 2}.} For the latter, we use the Biot--Savart law as in \cref{lemma u0} to write
\begin{align*}
\grad u_m^t(x) &=\frac1{2\pi} \int_{[-\pi,\pi]^d} \grad_x \frac{(x-y)^{\perp}}{|x-y|^2}  \omega_m^t(y)\dd y\\
&\quad + \sum_{0\not=q\in 2\pi\Z^d} \frac1{2\pi} \int_{[-\pi,\pi]^d} \grad_x\left(\frac{x+q-y}{|x+q-y|^2} -\frac{x+q}{|x+q|^2}\right)^{\perp}\omega_m^t(y)\dd y,
\end{align*}
where we have used that $\omega_m^t$ has zero mean in the long-range terms. These are controlled by the sum
\[
\sum_{0\not=q\in 2\pi\Z^d} \frac1{|q|^3} \|\omega_m^t\|_{L^{1}}\lesssim h_m |Q_m^t| \lesssim 2^{-m},
\]
 by \eqref{31}. In total, the off-diagonal long-range contributions are thus order one, $\sum_{m\ge 1}2^{-m} \lesssim1 $. The off-diagonal short-range contribution is estimated by
 \[
 \int_{[-\pi,\pi]^2} \frac1{|x-y|^2}|\omega_m(y)|\dd y \lesssim \frac1{\dist^2(Q_n^t,Q_m^t)} \|\omega_m\|_{L^1} \lesssim 2^{4em} h_m |Q_m^t|.
 \]
 Our bound on $h_m$ in \eqref{31} gives that this contribution is again at most of the order $2^{-m}$.

It remains to treat the diagonal contribution. We notice that $\omega_n$ is $\log^2$-H\"older with norm
\begin{equation}
    \label{32}  [\omega_n]_{C^{\log^2}} \lesssim h_n \log^2 d_n.
\end{equation}
Indeed, we will derive this estimate via interpolation between the two trivial estimates
\[
\|\omega_n\|_{L^{\infty}} \le h_n  \quad \mbox{and}\quad \|\grad\omega_n\|_{L^{\infty}} \lesssim h_n d_n,
\]
where the gradient estimates makes use of the assumption that $d_n $ grows super-exponentially. For any $R<1/2$, we then use the $L^{\infty}$ estimate for $|y|\ge R$ to deduce that 
\[
|\omega_n(x+y)-\omega_n(x)| \log^2\frac1{|y|} \le 2 \|\omega_n\|_{L^{\infty}} \log^2\frac1{|y|} \lesssim h_n \log^2 \frac1R.
\]
Furthermore, in view of the gradient estimate and the fact that $s\mapsto s\log^2 \frac1s$ is growing for $s\lesssim 1$, we have, for $|y|\le R$,
\[
|\omega_n(x+y)-\omega_n(x)| \log^2\frac1{|y|} \le \|\grad\omega_n\|_{L^{\infty}} |y|\log^2\frac1{|y|} \lesssim h_nd_n R \log^2 \frac1R.
\]
Combining both inequalities, we find that
\[
|\omega_n(x+y)-\omega_n(x)| \log^2\frac1{|y|} \lesssim h_n \log^2\frac1R \left(1+ d_n R\right),
\]
for any $|y|\le 1/2$ and any $R\lesssim 1$. The choice $R=1/d_n$ is thus admissible and yields the claim in \cref{32}.

Because $\log^2$-H\"older regularity is propagated by $\log$-Lipschitz vector fields by \cref{L11}, the same estimate applies to the perturbation blob $\omega_n$ at any later time $t\lesssim 1$. Due to \cref{lem log holder} and the Biot--Savart law $u_n^t = \grad^{\perp}\Delta^{-1}\omega_n^t$, it then follows that
\begin{equation}
    \label{37}
\|\grad u_n^t\|_{L^{\infty}}\lesssim h_n + h_n \log^2 d_n \lesssim h_n \log^2 d_n.
\end{equation}
By our assumption on $h_n$ and $d_n$, we deduce that the right-hand side is controlled uniformly in $n$, which completes the proof of the gradient estimate \cref{bd grad 2}.

\underline{Step 2.} \emph{Proof of \crefrange{34}{35}.} We now  deduce \cref{34} and \cref{35} from \cref{bd grad 2}. In light of the flow estimate \cref{est x1}, the $x_1$ component of any $x$ in $Q_n^t$ is larger then order $2^{-2(1+e)n}$, and thus, the gradient estimate on the Bahouri--Chemin velocity \cref{bd grad} yields
\[
|\grad \bar u(x)| \lesssim |\log |x|| \lesssim |\log x_1| \lesssim n.
\]
 It follows from the just established bound \cref{bd grad 2} that
\[
\|\grad u(t,\cdot)\|_{L^{\infty}(Q_n^t)} \le \|\grad \bar u\|_{L^{\infty}(Q_n^t)}+\|\grad  u_*(t,\cdot)\|_{L^{\infty}(Q_n^t)} \lesssim n+1 \lesssim n.
\]

Similarly, given $\eps$ as in \cref{lemma u0}, if $D$ is sufficiently large, it follows from the flow estimates \cref{est x1} and \cref{est x2} that $x_s\le \eps x_1$ for all $x\in Q_n^t$ with $t\ll1$ and thus
\cref{u1 est 2} implies that
\[
\partial_1 \bar u_1(x) \le -C |\log x_1| \lesssim -n.
\]

\underline{Step 3.} \emph{Proof of \cref{bd de12}.} Thanks to the symmetry properties~\cref{36}, we have 
\begin{align}
\bar u_{2}(x_1,0)=\de_1 \bar u_{2}(x_1,0)=0,
\end{align}
and thus, via \cref{bd nabla2},
\[
|\partial_1 \bar u_2(x)| \le \sup_{\lambda\in(0,1)} |\grad \partial_1 \bar u_2(x_1,\lambda x_2)| |x_2| \lesssim \frac{x_2}{x_1}.
\]
We use again the flow estimates \cref{est x1} and \cref{est x2} to bound the components of $x\in Q_n^t$. We obtain for $Ct\le 1/2$ that
\[
|\partial_1 \bar u_2(x)| \lesssim 2^{-(D/2-2-e)n}\le 2^{-D/3n},
\]
provided that $D\ge 6(2+e)$.

Regarding $u_*^t$, we recall the splitting into diagonal and off-diagonal contributions. For the former, we recall from \cref{37}  that 
\[
|\partial_1 u_n^t(x)| \lesssim h_n \log^2 d_n,
\]
and $h_n$ is chosen small enough so that the claimed estimate applies. For the latter, we estimate, using the symmetry \cref{36} that
\[
|\partial_1 u_{m,2}^t(x)| \le \sup_{\lambda\in(0,1)} |\grad \partial_1 u_{m,2}^t(x_1,\lambda x_2)||x_2|
\]
for any $m\not=n$. Arguing as in Step 1, we find for the Hessian of $u_m^t$ the bound
\[
\|\grad^2 u_m^t\|_{L^{\infty}(S^t_n)} \lesssim \left(\frac1{\dist(S_n^t,Q_m^t)^3}+1\right) \|\omega_m^t\|_{L^1},
\]
where $S_n^t = \{(x_1,\lambda x_2):\: x\in Q_n^t,\, \lambda\in(0,1)\}$. The distance between the cell $Q_m^t$ and the strip $S_n^t$ is always larger than $\max\{2^{-2en},2^{-2em}\}$, and thus, using \cref{est x2}, we obtain
\[
|\partial_1 u_{m,2}^t(x)| \lesssim \min\{2^{6en},2^{6em}\} h_m |Q_t^m| 2^{-D/2 n},
\]
for any $x\in Q_n^t$. Under our bound \eqref{301} on $h_m$, the $m$-dependent terms are all controlled by $2^{-m}$. Summing up thus yields the desired control.
\end{proof}

\subsection{Proof of the norm inflation on every  \texorpdfstring{$Q_n^t$}{Qnt}}

We have now all the tools at hand to establish the loss-of-regularity claim of \cref{th:sharp}. For this purpose, we make the Ansatz
\[
\delta(n) = n^\delta
\]
in \eqref{301}, where $\delta>1/a$ needs still to be fixed, and we choose $h_n$ as in \eqref{300}, so that \eqref{23} is in fact an identity.

\begin{proof}[Proof of \cref{th:sharp}]
We want to show that, for some small $t>0$, we have that $\omega(t,\cdot)\notin X_{a,s,-1}$, which then proves the claim due to \cref{pointwise desc}. To this end, we will show  that if \cref{def g} is satisfied by $\omega(t,\cdot)$ with regards to the parameters $a,s,-1$, for some locally integrable function $g$, then we must have 
 \begin{align}
\norm{g}_{L^2(R_n^t)}\gtrsim \sqrt{|Q_n^t|}h_n\log_2^{-1}\left(1+ C d_n e^{Ctn}\right)\exp\left(s\log_2^a \left(\frac{d_n}C e^{\frac{t}Cn}\right)\right).\label{lower bd g}
\end{align}

\underline{Step 1.} \emph{Condition \cref{lower bd g} implies blow up.} Before proving \cref{lower bd g}, we show how it yields the blow-up. Actually, for $\omega(t,\cdot)\not\in X_{a,s,-1}$ to hold, we should see that $g\not\in L^2(\T^2)$.
By the concavity of the mapping $z\mapsto z^a$ and our choice of $d_n$, we have that for $n_*$ sufficiently large
\begin{align*}
\log_2^a\left(\frac{d_n}C e^{\frac{t}C n}\right) & = \left( \log_2d_n  +\frac{tn}C \log_2 e-\log_2 C\right)^a\\
    &\ge  \log_2^ad_n  + a \left(\log_2d_n +\frac{tn}C \log_2 e-\log_2 C\right)^{a-1}\left(\frac{tn}C \log_2 e-\log_2 C\right)\\
    &\ge \log^a_2d_n  + a 2^{a-1} \frac{tn}C \log_2^{a-1} d_n     .
\end{align*}
Our choice  $d_n = 2^{n^\delta}$  then yields
\begin{align}
\log_2^a\left(\frac{d_n}C e^{\frac{t}C n}\right)
\ge n^{\delta a} +a2^{a-1} n^{(a-1)\delta+1} \frac{t }{C}.
\end{align}
Because $d_n$ is growing super-exponentially in $n$, we use these estimates and \cref{lower bd g} to infer the bound
\[
\|g\|_{L^2(R_n^t)} \gtrsim \sqrt{|Q_n^t|} h_n n^{-\delta} \exp\left(s\log_2^a d_n\right)\exp\left(s2^{a-1} n^{(a-1)\delta+1} \frac{t}{C} \right).
\]
Thanks to the saturating choice of $h_n$ in \cref{23}, the latter becomes
\[
\|g\|_{L^2(R_n^t)}\gtrsim n^{-\gamma}\exp\left(s2^{a-1} n^{(a-1)\delta+1} \frac{t}{C} \right),
\]
for some $\gamma>0$. Hence, for any time $t>0$, the right-hand side diverges as $n\to +\infty$ if $(a-1)\delta +1 >0$. Since \cref{23} enforces that $\delta a>1$, both conditions together are compatible if and only if $\frac{a-1}a+1>0$, or, equivalently, $a>1/2$.

\underline{Step 2.} \emph{Proof of \cref{lower bd g}.} We keep $n\in\N$ fixed and recall that,  by the symmetry of the flow, $\omega(t,\cdot)=\bar \omega+\omega_*\circ (X(t,\cdot))^{-1}$. Therefore,   on $Q_n^t$ the value of $\omega$ is $1$ plus the image of $\omega_n$ under the flow.
We consider rectangles of the form 
\[
P_{j,m}= \left(d_n^{-1}\left(j+\frac{1}{4}\right)\pi,d_n^{-1}\left(j+\frac{3}{4}\right)\pi\right)\times \left(me^{-\exp({\kappa n})},(m+1)e^{-\exp({\kappa n})}\right),
\]
for some $\kappa>0$, and let $P_{j,m}^t = X(t,P_{j,m})$ be their images under the flow map.  We note that, because these rectangles are initially pairwise disjoint, they remain disjoint during their evolution. Moreover, because the flow is volume-preserving,  if $n_*$ is chosen sufficiently large, their union covers at least $1\%$ of $R_n^t$, which can be easily checked initially. We finally remark that on $P_{j,m}^t\cap R_n^t$, it holds that
$\omega_n(t)= h_n$ if $j$ is even and $\omega^n(t)=- h_n $ if  $j$ is odd. In particular, if $x\in P_{j,m}^t\cap R_n^t$ and $y\in P_{j+1,m}^t\cap R_n^t$, it holds that $\omega(t,x) - \omega(t,y) = \pm 2h_n$, and thus the estimate \cref{def g} becomes
\[
g(x) + g(y) \ge 2h_n \log_2^{-1} \left(1+\frac1{|x-y|}\right) \exp\left(s\log_2^a\frac1{|x-y|}\right).
\]

Provided that 
\begin{align}
d_n^{-1} e^{-Ctn}\lesssim \max_{x\in P_{j,m}^t\cap Q_n^t,y\in P_{(j+1),m}^t\cap Q_n^t}|x-y|\lesssim d_n^{-1} e^{-t/Cn},\label{bd rect}
\end{align}
holds, for some $C\ge 0$, from \cref{def g}, we deduce the lower bound
\[
g(x)+g(y) \ge 2 h_n \log_2^{-1}\left(1+ C d_n e^{Ctn}\right)\exp\left(s\log_2^a \left(\frac{d_n}C e^{t/Cn}\right)\right).
\]
Of course, the estimate is symmetric in $x$ and $y$, and thus, whenever $x\in P_{j,m}^t\cap R_n$ such that also $P_{j+1,m}^t$ or $P_{j-1,m}^t$ lies inside of $R_n^t$, it holds that
\[
g(x) \ge  h_n \log_2^{-1}\left(1+ C d_n e^{Ctn}\right)\exp\left(s\log_2^a \left(\frac{d_n}C e^{t/Cn}\right)\right).
\]

for at least half (in the sense of the measure) of these $x$.

An integration over all these $x$, which certainly cover $\frac{1}{200}$-th of $R_n^t$, which is in a volume comparable to $Q_n^t$, then gives \cref{lower bd g}.

\underline{Step 4.} \emph{Proof of \cref{bd rect}.} To conclude the proof, we need to show that \cref{bd rect} holds. We first use Gr\"onwall's inequality and the gradient bound \cref{34} to show that the diameter of the  set $(P_{j,m}\cup P_{(j+1),m})\cap R_n$ grows or decreases at most with an exponential rate of the order $n$, that is
\[
e^{-Cn t} |x-y| \le |X(t,x)-X(t,y)| \le e^{Cn t}|x-y|,
\]
for some $C\le 1$ and any $x \in Q_n\cap P_{j,m}$ and $y\in Q_n\cap P_{j+1,m}$. By construction and our choice of $d_n$, it must hold that
\[
d_n^{-1}\lesssim |x-y| \lesssim d_n^{-1} + e^{-\exp(\kappa n)} \lesssim d_n^{-1},
\]
and thus, the lower bound in \cref{bd rect} is already established. 

For the upper bound, we have to iterate these estimates component-wise. First, using \cref{34} again and \cref{bd de12}, we get 
\begin{align*}
\frac{\mathrm{d}}{\mathrm{d}t}|X_2(t,x)-X_2(t,y)|& \leq |X_2(t,x)-X_2(t,y)|\sup_{Q_n^t}|\de_2 u_2|+|X_1(t,x)-X_1(t,y)|\sup_{Q_n^t}|\de_1 u_2|\\
&\lesssim n |X_2(t,x)-X_2(t,y)|+d_n^{-1}e^{Ctn}2^{-D/3n}.
\end{align*} 
Using Gr\"onwall's inequality and the fact that the $x_2$-components are initially order $e^{-\exp(\kappa n)}$ apart, we thus find
\begin{equation}
    \label{40}
|X_2(t,x)-X_2(t,y)| \lesssim e^{-D/4 n}d_n^{-1}, 
\end{equation}
 if $D$ is sufficiently large.
Similarly, using \cref{35}, \cref{34}, and the previous bound, we deduce  \begin{align*}
&\frac{\mathrm{d}}{\mathrm{d}t}\left|X_1(t,x)-X_1(t,y)\right|\leq |X_1(t,x)-X_1(t,y)|\sup_{Q_n^t}\de_1 u_1+|X_2(t,x)-X_2(t,y)|\sup_{Q_n^t}|\de_2 u_1|\\
&\leq -\frac{n}{C}|X_1(t,x)-X_1(t,y)| + Cn d_n^{-1} e^{-D/4n}.
\end{align*}
Another application of Gr\"onwall's inequality then yields
\[
|X_1(t,x) - X_1(t,y)| \lesssim e^{-n/C t} d_n^{-1} + e^{-D/4 n} d_n^{-1} \lesssim e^{-n/C t} d_n^{-1}.
\]
Together with \cref{40}, this establishes the upper bound in \cref{bd rect}.
\end{proof}

\begin{remark}[Fractional Sobolev spaces]\label{rk:fractional-spaces}
As pointed out in \cref{rk:Hs-bu}, the proof works in a completely analogous way for $H^s(\T^2)$ spaces with  $s< \frac{1}{2}$: we can, e.g., take 
\begin{align}
&d_n\coloneqq  e^{(\frac{1}{10}e^{n})},
\end{align}
and choose, as before,  $h_n$ as in \eqref{300}. The restriction to $s<\frac12$ comes from the fact that the Bahouri--Chemin vortex belongs to $H^{1/2-}(\T^2)$.
\end{remark}

\vspace{0.5cm}

\section*{Statement about conflicting interests}
The authors declare that there is no conflict of interest.

\section*{Acknowledgments}

We thank M. Colombo and E. Zuazua for helpful discussions on the topic of this work. 

DM and CS are funded by the Deutsche Forschungsgemeinschaft (DFG, German Research Foundation) under Germany's Excellence Strategy EXC 2044–390685587, Mathematics M\"unster: Dynamics--Geometry--Structure. DM thanks the Friedrich-Alexander-Universität Erlangen-Nürnberg for their kind hospitality.

NDN is a member of the Gruppo Nazionale per l’Analisi Matematica, la Probabilità e le loro Applicazioni (GNAMPA) of the Istituto Nazionale di Alta Matematica (INdAM).  He has been  supported by the Swiss State Secretariat for Education, Research and Innovation (SERI) under contract number MB22.00034 through the project TENSE. He thanks the Universität Münster for their kind hospitality.

\vspace{0.5cm}

\printbibliography

\vfill 
\end{document}